\newtheorem{thm}{Theorem}[section]
\newtheorem{cor}[thm]{Corollary}
\newtheorem{lem}[thm]{Lemma}
\newcommand{\be}{\begin{equation}}
\newcommand{\ee}{\end{equation}}
\newcommand{\Z}{\mathbb{Z}^N}
\newcommand{\R}{\mathbb{R}}
\newcommand{\N}{\mathbb{N}}
\newcommand{\E}{\mathbb{E}}
\def \eps {{ \varepsilon }}
\def \o {{  {\mathcal{O}} }}
\def \calf {{  {\mathcal{F}} }}
\def \cala {{  {\mathcal{A}}  }}
\def \calm {{  {\mathcal{M}}  }}
\def \caln {{  {\mathcal{N}}  }}
\def \calk {{  {\mathcal{K}}  }}
\def \caly {{  {\mathcal{Y}}  }}
\def \calz {{  {\mathcal{Z}}  }}
 \def \call {{  {\mathcal{L}}  }}
\begin{document}

\begin{titlepage}
\title{\bf 
Large Deviation Principles
of Invariant Measures 
 of Stochastic Reaction-Diffusion   Lattice Systems
 }
\vspace{7mm}

\author{
 Bixiang Wang  
\vspace{1mm}\\
Department of Mathematics, New Mexico Institute of Mining and
Technology \vspace{1mm}\\ Socorro,  NM~87801, USA \vspace{1mm}\\
Email: bwang@nmt.edu\vspace{6mm}\\
  }

\date{}
\end{titlepage}

\maketitle

\medskip

\begin{abstract}  
In this paper, we study the
  large deviation principle 
 of   invariant measures of
   stochastic  
reaction-diffusion  lattice
systems   driven by 
multiplicative  noise.
We first show that any limit
of a sequence of invariant measures
of the stochastic system
must be an invariant measure of the
deterministic 
limiting system as noise intensity approaches zero.
We then prove the 
uniform Freidlin-Wentzell 
large deviations  of solution paths over all initial data
and the uniform
Dembo-Zeitouni  large deviations
of solution paths over a compact set of initial data.
We finally establish the large deviations
of invariant measures by combining the idea of
tail-ends estimates and the argument of
weighted spaces.
  \end{abstract}

{\bf Key words.}       
Large deviation principle,  invariant measure,
compactness,  exponential tightness, weighted space,
  stochastic lattice system. 

 {\bf MSC 2020.}   60F10,  60H10, 37L55,  37L30.

\baselineskip=1.1\baselineskip

\section{Introduction} 
\setcounter{equation}{0}

In this paper, we investigate the
large deviation principle
(LDP) 
of   invariant measures   
 of  the    stochastic
reaction-diffusion
 lattice system  driven by multiplicative 
 noise defined on the
 $N$-dimensional  integer
 set $\Z$.
 Given   $i\in   \Z$, consider the 
 It\^{o} stochastic system:
 \be\label{intr1}
 d u^\eps_i(t)
 + ( \lambda u^\eps_i(t)
 +  (Au^\eps(t))_i
  +f_i ( u^\eps_i(t) ) ) dt  
  =  g_idt  + \sqrt{\eps}
\sigma_i (u^\eps (t) )dW , \quad t>0,
\ee
 with  initial data
 \be\label{intr2}
 u^\eps_i(0)=u_{0,i},
 \ee
  where 
  $i=(i_1,\ldots, i_N)\in \Z$,
$u=(u_i)_{i\in \Z}$
is an unknown sequence,
  $\lambda>0$  
    and $\eps\in [0,1]$  are 
constants, 
$A$ is the  negative discrete 
$N$-dimensional Laplace operator,  
$g=
 (g_{ i})_{  i\in \Z} 
\in \ell^2$,
  and 
   $ W$  is an $\ell^2$-cylindrical Wiener process
   on   a  complete filtered probability 
space
   $(\Omega, \mathcal{F}, 
   \{ { \mathcal{F}} _t\} _{t\in \R},  P )$
   which satisfies   the usual condition.
   For every $i\in \Z$,  
  $f_i:  \R \to \R$  
 is a nonlinear function with  arbitrary growth rate, and $\sigma=(\sigma_i)_{i\in
 \Z}: \ell^2\to \call_2(\ell^2, \ell^2) $
   is a bounded  Lipschitz
function where 
$\call_2(\ell^2, \ell^2)$ is
the space of 
 Hilbert-Schmidt operators
 from $\ell^2 \to \ell^2$
 with norm $\| \cdot \|_{\call_2(\ell^2, \ell^2)}$.

     Lattice  systems  arise
     from many applications 
     in  physical and biological
      systems
     \cite{bel2,  chua2,
     	 kap1, kee1, kee2}.
     The  traveling solutions and   dynamics 
     of   deterministic   lattice systems   have been 
     investigated  in 
     \cite{afr1, bates1, bates2,
     	bates3, bey1,   cho2, cho3, cho4, elm1, elm2,
     	   han3, kara1, mori1, wan8, zin1}
      and the references therein.
      Recently, the dynamics of 
      random lattice systems
      has been extensively
      studied, see, e.g.,
       \cite{bat1, bat3, 
       	car7, car8, han2}
       for 
      random attractors;
      and   \cite{clw,chen1,chen2,
      	lww1, lww2, wan2019, rwan1, xwan1}
      for 
     invariant measures
     and periodic measures.
     The goal of this paper is
     to prove  the
     LDP of invariant measures
     of   system
     \eqref{intr1}-\eqref{intr2}
     as $\eps \to 0$, which is stated below.
     
     \begin{thm}\label{main}
     If \eqref{f1},
     \eqref{sig1}-\eqref{sig4} and \eqref{lgc}
     given in Sections 2 and 4 are fulfilled,
     then the family $\{\mu^\eps\}$
     of invariant measures of \eqref{intr1}-\eqref{intr2}
     satisfies the LDP in $\ell^2$ as $\eps \to 0$.
      \end{thm}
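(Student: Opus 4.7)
The plan is to derive the LDP for $\{\mu^\eps\}$ by combining the uniform Freidlin--Wentzell and uniform Dembo--Zeitouni path-space LDPs already established earlier in the paper with the dissipative structure of the limiting deterministic system guaranteed by \eqref{lgc}, plus exponential tightness of $\{\mu^\eps\}$ in $\ell^2$ obtained through tail-ends estimates and a weighted-space embedding. Condition \eqref{lgc} should force the $\eps=0$ equation to possess a unique globally asymptotically stable equilibrium $u^\ast\in\ell^2$; the candidate rate function is then the Freidlin--Wentzell quasi-potential
\[
V(u)=\inf\{I_{u^\ast}^T(\phi):T>0,\ \phi\in C([0,T];\ell^2),\ \phi(0)=u^\ast,\ \phi(T)=u\},
\]
where $I_{u_0}^T$ denotes the path-space rate function. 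The preliminary fact announced in the introduction, namely that weak limits of $\{\mu^\eps\}$ are invariant for the deterministic flow, combined with global attraction, identifies $\delta_{u^\ast}$ as the unique weak limit and motivates this choice.

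For each $T>0$ and Borel $B\subset\ell^2$, invariance yields $\mu^\eps(B)=\int P(u^\eps(T;u_0)\in B)\,d\mu^\eps(u_0)$. For the lower bound on an open set $G$, I would restrict this integral to a small ball around $u^\ast$, which carries nearly all $\mu^\eps$-mass for small $\eps$, and apply the uniform Dembo--Zeitouni LDP on that compact neighborhood to a control path steering $u^\ast$ into $G$, yielding
\[
\liminf_{\eps\to 0}\eps\log \mu^\eps(G)\ge -\inf_G V.
\]
For the upper bound on a closed set $F$, I would split the integrand according to whether the orbit has returned to a small neighborhood of $u^\ast$ by time $T$: on the good event, the uniform Freidlin--Wentzell LDP over all initial data bounds the probability of reaching $F$; on the bad event, the exponential tightness of $\{\mu^\eps\}$ supplies the needed smallness, giving $\limsup_{\eps\to 0}\eps\log \mu^\eps(F)\le -\inf_F V$.

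The main obstacle is the exponential tightness of $\{\mu^\eps\}$ in $\ell^2$, since bounded sets in this infinite-dimensional space are not compact. I would establish uniform-in-$\eps$ tail-ends bounds of the form
\[
\limsup_{n\to\infty}\sup_{\eps\in(0,1]}\eps\log \mu^\eps\Bigl(\Bigl\{u\in\ell^2:\sum_{|i|\ge n}u_i^2>\delta\Bigr\}\Bigr)=-\infty
\]
for every $\delta>0$, by exploiting the dissipativity in \eqref{lgc} and the Hilbert--Schmidt control on $\sigma$ at the stationary level, together with a large-ball estimate $\mu^\eps(\|u\|_{\ell^2}>R)\le \exp(-cR^2/\eps)$ for $R$ large. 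Introducing a weight sequence $w_i\to 0$ renders $\ell^2$-bounded sets compact in the weighted space $\ell^2_w$, so the LDP can first be proved on $\ell^2_w$ by standard means and then transferred back to $\ell^2$ using the tail-ends bounds. The hardest step is precisely this uniform-in-$\eps$ exponential tail control for the invariant measures, which does not follow directly from the path-space LDPs and will require a delicate Lyapunov-type exponential estimate adapted to the high-index regime of the lattice.
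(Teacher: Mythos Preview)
Your outline is essentially the paper's strategy: the rate function is indeed the quasi-potential from the unique equilibrium $u^\ast$ of the limiting system, the lower bound comes from invariance plus the weak convergence $\mu^\eps\to\delta_{u^\ast}$ and the uniform path LDP near $u^\ast$, and the upper bound combines exponential tightness with a splitting according to whether the process has visited a neighborhood of $u^\ast$. One refinement: the paper's upper bound is not a single ``returned by time $T$ or not'' split but an iterated one over times $t_0,2t_0,\dots,n_0 t_0$, with $n_0$ chosen so that any path avoiding $\cala_\eta$ at all these checkpoints accumulates action exceeding $s$; the Dembo--Zeitouni uniform LDP over the \emph{compact} set $\overline{B}_{\ell^2_\kappa}(0,R)$ (not the Freidlin--Wentzell one over all initial data) then handles this ``never returns'' event.

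Your weighted-space step, however, is inverted. You propose a weight $w_i\to 0$ so that $\ell^2$-bounded sets become compact in $\ell^2_w$, prove the LDP there, and transfer back via tail-ends bounds. But transferring an LDP from a weaker topology to a stronger one requires exponential tightness in the stronger topology --- exactly what you are trying to establish --- so the detour is circular, and your tail-ends bound would already give exponential tightness in $\ell^2$ directly. The paper goes the other way: it takes a \emph{growing} weight $\kappa(|i|)=(1+|i|^2)^{1/2}$, so that closed balls of $\ell^2_\kappa$ are compact in $\ell^2$, and proves a single exponential Lyapunov estimate of the form $\E\exp(\delta\eps^{-1}\|\kappa_\delta u^\eps(t)\|^2)\le C$ (Lemma~\ref{ldpc3}), which immediately yields $\mu^\eps(\ell^2\setminus\overline{B}_{\ell^2_\kappa}(0,R))\le e^{-cR^2/\eps}$. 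This is precisely why assumption \eqref{sig4} is formulated with the growing weight $\kappa$: it is needed to close the It\^o estimate in the weighted norm, and your decaying-weight formulation would not exploit it.
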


     The LDP of distributions of {\it solutions }
     has been established for a variety of
     stochastic differential equations  in the literature,
     including the stochastic ordinary 
     differential equations as well as the
     stochastic partial differential equations.
     However,
     the LDP of  {\it invariant measures}
      of stochastic systems
     is not well understood, and so far there
     are only a few publications in this area.
     The reader is referred to
     \cite{fre2} for 
     the  LDP of invariant measures  
      of  finite-dimensional
     stochastic systems, and 
     to
    \cite{brz1, cerr0, cerr2, mar1, mar2, sow1}
    for that of  infinite-dimensional stochastic systems.
    In particular, in the infinite-dimensional case,
    the LDP of invariant measures for additive noise
    has been investigated in \cite{brz1,  cerr2, mar1, mar2},
    and for multiplicative noise in 
 \cite{cerr0, sow1}. The stochastic partial
 differential equations in all these papers are
 defined in a bounded spatial domain where
 the Sobolev embeddings 
  are compact.
    As far as the author is aware, there is
     no result reported in the literature
     on  the LDP of invariant measures
     of infinite-dimensional lattice systems
     like \eqref{intr1}.
     
     Note that the lattice system
     \eqref{intr1} can be considered
     as an infinite-dimensional stochastic
     system defined in $\ell^2$
     which has distinct features from the
     finite-dimensional systems associated
     with stochastic ordinary differential equations
     as well as infinite-dimensional
     systems associated with stochastic
     partial differential  equations
     in bounded domains.
     For example,
     every bounded set in a finite-dimensional
     space is precompact, but that is false
     for the infinite-dimensional space 
     $\ell^2$.
     On the other hand,
     since  the Sobolev embedding
     $H^1(\o) \hookrightarrow
      L^2(\o)$  is  compact
     for   a bounded domain $\o$,
     the solution operator 
     generated by the 
     Laplacian  in $ L^2(\o)$ is compact.
    However, since there is
    no such compact embedding 
    available in the discrete case, the solution operator generated by 
     the discrete Laplacian is no longer
     compact in $\ell^2$.
     
     For ordinary differential equations
     and  partial differential  equations
     in bounded domains, the 
        compactness of 
     solution operators   is a key
    for   proving 
    the tightness of  distributions of 
    a family of solutions,
    the compactness of
    the level sets of  rate functions
    as well as    the LDP of
    solutions and  invariant measures.
    Since the solution operator of the
    lattice system \eqref{intr1} is not compact,
    there is an essential difficulty to 
    obtain  the existence and the LDP
    of invariant measures in this case,
   including 
      proving  the exponential tightness of 
     invariant measures 
    over  compact sets.

     In order   to overcome the   
     non-compactness of solutions in
     $\ell^2$, we will  apply 
    the idea of uniform tail-ends  estimates
    to  prove the tightness of distributions
    of a family of solutions and hence the
    existence of invariant measures 
    of \eqref{intr1} as in \cite{wan2019}.
    The uniform  tail-ends estimates
    (see Lemma \ref{tail})
     indicate that  all the  tails of solutions of \eqref{intr1}
     are uniformly small which along with
     the precompactness of bounded sets in
     a finite-dimensional space  yields
     the tightness of distributions of solutions.

   In order to prove
     the LDP of invariant measures
     of \eqref{intr1} in $\ell^2$, we will first
     establish the uniform
     Freidlin-Wentzell  LDP \cite{fre2}  of solutions
    over  all initial data
         (see Theorem \ref{uldp})
    by   
      the weak convergence method as
     developed in 
     \cite{bud1, bud2, dup1,sal2}.
   We will then  establish 
     the  uniform  
   Dembo-Zeitouni
     LDP  \cite{dem1} of solutions
     with respect to 
       a compact set of initial data. 
     We remark that
     both the 
     uniform
     Freidlin-Wentzell  LDP
     and the   uniform  
   Dembo-Zeitouni
     LDP of solutions are needed for proving the
     LDP of invariant measures
     of \eqref{intr1}.
     
     As proved by Salins in \cite{sal2},
      the uniform 
      Freidlin-Wentzell  LDP 
      and
      the uniform Dembo-Zeitouni LDP
      of solutions 
are      not equivalent
    over a non-compact set of  initial data, and hence
    the 
    uniform Dembo-Zeitouni LDP
    of solutions over a
    non-compact set of  initial data
    does not follow from 
    the uniform 
      Freidlin-Wentzell  LDP as obtained
      in Theorem \ref{uldp}.
      Nevertheless, in light of \cite{sal2},
     we can show that 
      Theorem \ref{uldp}
      does imply the 
 uniform Dembo-Zeitouni LDP
 of \eqref{intr1} 
     over a
    compact set of  initial data
    (see Theorem \ref{uldp_DZ}).
     In order to apply 
Theorem \ref{uldp_DZ} to establish 
the LDP of 
      invariant measures of \eqref{intr1},
     we  must 
     prove  the exponential tightness of 
     of invariant measures  over compact subsets
     in $\ell^2$.
     In the finite-dimensional case,  such  a compact
     set  can be chosen as  a  bounded closed subsets
     of the space.
     However, 
      a bounded closed subset
     is not compact in $\ell^2$.
     To solve this  problem,
     in the present paper, we will apply the 
     idea of 
     weighted  $\ell^2$  space 
     and  construct a compact subset
     of $\ell^2$
     from the  
       weighted space  with appropriate
    weight  
     (see  Lemma \ref{ldpc4}).

       It is worth mentioning that
       the stochastic system \eqref{intr1} 
       is  driven by a {\it multiplicative noise},
        and   the 
    uniform Dembo-Zeitouni LDP
 of  solutions of  \eqref{intr1} 
     over a
    bounded set of  initial data
    {\it remains open}.
    For the stochastic reaction-diffusion equation
    driven by  {\it multiplicative noise}
    in a bounded domain,
    the  
    uniform Dembo-Zeitouni LDP of solutions
    was used in \cite{sow1}
    {\it over bounded  initial data}
    and in \cite{cerr0}
    {\it over all initial data}.
    However, there was no proof  given in both papers.
   It seems that 
   it  is nontrivial to prove 
    the  
    uniform Dembo-Zeitouni LDP of solutions
    for   multiplicative noise  
    over bounded  or entire  initial data, and 
    we do not know   how to establish
    the    
    uniform Dembo-Zeitouni LDP 
    of the 
    reaction-diffusion equation
    with multiplicative noise
    over {\it non-compact } initial data.
    That is why in this paper,  we
    prove  the uniform 
    Dembo-Zeitouni LDP 
    only over a compact set of  initial data
    but not over a bounded set of initial data.
    Because   the 
     uniform 
    Dembo-Zeitouni LDP  of solutions
    is available only over compact initial data,
    we must establish the exponential tightness
    of invariant measures over compact sets
    in order to 
    prove 
    the LDP of invariant measures.
    Note that the uniform  exponential probability estimates
    of solutions  
    over  {\it  bounded }  subsets are  not sufficient
    to prove the LDP of invariant measures
    when the solutions satisfy
     the 
     uniform 
    Dembo-Zeitouni LDP 
      only over {\it compact}   subsets of  initial data.
      Of course, 
      the uniform  exponential probability estimates
    of solutions  
    over  {\it  bounded }  subsets 
    can be used to 
      prove the LDP of invariant measures
    when the solutions satisfy
     the 
     uniform 
    Dembo-Zeitouni LDP 
       over {\it  bounded}   subsets of  initial data.
    Since bounded  subsets of $\ell^2$
    are not precompact, we have to derive
    the exponential estimates of  
    solutions in a weighted $\ell^2$  space,
    and use the compactness of embedding
    from a weighted space to the standard
    $\ell^2$ space to 
    obtain  the exponential tightness of
    invariant measures.
    For that purpose, the diffusion term
    must satisfy certain conditions in a weighted
    $\ell^2$ space, see, e.g., \eqref{sig4}.

    This  paper is organized as follows.
    In the next section,
    we prove the existence of invariant measures
    of \eqref{intr1}-\eqref{intr2}
    under some conditions on the nonlinear
    drift and diffusion terms. We also prove 
    that any limit of a sequence of invariant measures
    of the stochastic system
    must be an invariant measure of the limiting
    system with $\eps =0$.
    In Section 3, we prove the uniform 
     Freidlin-Wentzell  LDP  and the 
  uniform Dembo-Zeitouni LDP
 of  solutions.  Section 4 is devoted to
 the  rate functions of invariant measures,
 energy estimates and the exponential tightness
 of solutions.
 In the last two sections, we prove 
     the LDP lower bound and the LDP
    upper bound
    of invariant measures, respectively, by combining the
    ideas of 
    \cite{mar2, sow1}  with   
     weighted spaces.

\section{Existence and limits of invariant measures} 
\setcounter{equation}{0}

 This section is devoted to the 
 existence and the limiting behavior of invariant
 measures of 
  \eqref{intr1}-\eqref{intr2} under appropriate
  conditions. 
  We first  
  discuss the assumptions on the
  nonlinear terms in \eqref{intr1},
  and then show the existence of invariant
  measures of the system. We finally
  prove that any limit of a sequence of
  invariant measures of the stochastic system
  must be an invariant measure of the limiting
  system with $\eps =0$.

  Let $\ell^2$ be the Hilbert space of
  square-summable sequences of real numbers
  on $\Z$ 
 with   norm $ \| \cdot \|$ and  inner product
 $ (\cdot , \cdot )$.
 For  every $j=1,\ldots, N$,  
define  the operators $A_j, B_j, B^*_j :\ell^2\to \ell^2$
  by, for   $u=(u_i)_{i\in \Z}\in \ell^2$
   and $i=(i_1,i_2,\dots,i_N)\in\Z$,
   $$
   (A_ju)_i= -u_{(i_1,\dots,i_j+1,\dots,
 i_N)}+2u_{(i_1,\dots,i_j,\dots,i_N)}-u_{(i_1,\dots,i_j-1,\dots,i_N)},
   $$ 
   $$
    (B_ju)_i=u_{(i_1,\dots,i_j+1,\dots, i_N)}
-u_{(i_1,\dots,i_j,\dots,i_N)},
$$
and 
   $$
     (B_j^{*}u)_i=u_{(i_1,\dots,i_j-1,\dots, i_N)}-u_{(i_1,\dots,i_j,\dots,i_N)}.
     $$
      The  negative discrete 
 Laplace operator $A:\ell^2\to \ell^2$
    is given by,  for   $u=(u_i)_{i\in \Z}\in \ell^2$
   and $i=(i_1,i_2,\dots,i_N)\in\Z$,
$$
    (Au)_i=-u_{(i_1-1,i_2,\dots,i_N)}
 -u_{(i_1,i_2-1,\dots,i_N)} -\dots-u_{(i_1,i_2,\dots,i_N-1)}
  $$
  $$
  +2Nu_{(i_1,i_2,\dots,i_N)}
-u_{(i_1+1,i_2,\dots,i_N)}-u_{(i_1,i_2+1,\dots,i_N)}
 -\dots-u_{(i_1,i_2,\dots,i_N+1)}\big).
$$
Then we have $
A=\sum_{j=1}^N A_j 
 =\sum_{j=1}^N    B_jB_j^{*}
 =\sum_{j=1}^N
 B_j^{*}B_j$.

     For the nonlinear drift  
     $f=(f_i)_{i\in \Z}$,  
we assume  that for every
 $i\in \Z$,
$f_i \in C^1(\R, \R)$ such that 
for $s\in \R$,
\be\label{f1}
f_i ( 
0) =0,
\quad f_i (s)s \ge 0,   \quad \text{and} 
\quad    f_i ^\prime  (  s) 
   \ge - \gamma,
\ee 
where $\gamma>0$ is a constant.

%
%For every   $ u=(u_i)_{i \in \Z} \in \ell^2$, 
%we write
% $ {f} (u) =  \left ( f_i( u_i) \right )_{i \in \Z}$.
% Then $f: \ell^2
% \to \ell^2$ is locally Lipschitz continuous in the sense that
%for each  $R>0$, 
%  there exists a constant $L_R>0  $
% such that 
% for any  $u_1, u_2 \in \ell^2$ with $\|u_1 \| \le R$
% and $\| u_2\| \le R$,
%\be\label{f2}
% \| {f} ( u_1)- {f} ( u_2)\|\leq
%L_R   \|u_1- u_2\|. 
%\ee
 
In the sequel, we assume that
    $\sigma: \ell^2\to \call_2(\ell^2,
 \ell^2)$ 
 is
 a bounded 
 Lipschitz map;   
 that is,  there exists  $L_\sigma>0$
 such that
\be\label{sig1}
 \sup_{u\in \ell^2}
 \|\sigma (u ) 
 \|_{\call_2 (\ell^2, \ell^2)}
 \le  L_\sigma   ,
\ee
and
   \be\label{sig2}
 \|\sigma (u_1)-\sigma (u_2)
 \|_{\call_2 (\ell^2, \ell^2)}
 \le L_\sigma \| u_1-u_2\|,
 \quad
 \forall \  u_1, u_2 \in \ell^2.
\ee
For the existence of invariant measures, we also assume
   \be\label{sig3}
 \lim_{k \to \infty}
 \sup_{u\in \ell^2 }
 \|1_{\{|\cdot |\ge k\}} \sigma (u)\|
 _{\call_2(\ell^2, \ell^2)}
 =0,
\ee
 where 
 $ 
  1_{\{|\cdot |\ge k\}} \sigma (u)  
 $  is  the tail of 
 $\sigma (u)$ as 
 defined by, for every $v\in \ell^2$  
 and $i\in \Z$, 
  $$
   \left (  (1_{\{|\cdot |\ge k\}} \sigma (u) ) (v)
  \right )_i
  =
    1_{[k, \infty) } (|i| )  (\sigma (u)  (v))_i
   =
  \left \{  
  \begin{array}{ll}
    \left (\sigma (u)  (v)
  \right )_i ,& \  \ \text{if }\ |i| \ge k\\
 0,  & \  \ \text{if }\ |i| < k.
 \end{array}
  \right .
  $$
  
  We remark that condition
  \eqref{sig3} is needed when deriving 
  the uniform tail-ends estimates of 
  solutions (see Lemmas \ref{tail} and  \ref{cest2}), which are necessary
  for proving the tightness of distribution laws
  of a family of solutions and hence the existence of
  invariant measures.
  
  We now  introduce a weight function
   ${\kappa}: \R  \to \R$  
      as given by
        $$    {\kappa}  (s) =
     \left ( 1 + s^2
     \right )^{\frac 12}
     , \quad \forall
     \  s\in \R .
     $$ 
   Let  $ \ell^2_{\kappa}
     $ be the weighted space   defined by 
     $$
     l ^2_{\kappa} 
     =\left \{
     u\in \ell^2: \  \| u \|_{\ell^2_{\kappa} }
     =
     \left ( \sum_{i\in \Z}
     {\kappa}^2 (|i|)  | u_i |^2  
     \right )^{\frac 12} <\infty
     \right \}.
     $$ 
   For the LDP of invariant measures,
  we further assume  
  $\sigma: \ell^2 \to \call_2(\ell^2, \ell^2_\kappa)$
  is bounded:
  \be\label{sig4}
   \sup_{u\in \ell^2} 
 \|    \sigma (u)\|
 _{\call_2(\ell^2, \ell^2_\kappa )}
 \le L_\sigma.
\ee

  Since bounded  subsets of $\ell^2$
    are not precompact,  we need to use
    condition \eqref{sig4} to derive 
    the exponential estimates of  
    solutions in  the  weighted space  $\ell^2_\kappa$,
    and  then use the compactness of embedding
    $\ell^2_\kappa \hookrightarrow 
     \ell^2$  to 
    obtain  the exponential tightness of
    invariant measures in $\ell^2$
     (see Lemmas \ref{ldpc3} and \ref{ldpc4}),
  which is necessary to  establish the LDP
  of invariant measures.

  Next, we discuss an example  where 
  $\sigma$ satisfies  \eqref{sig1}-\eqref{sig4}.
  Given $i\in \Z$, let $e_i$ be the element
  in $\ell^2$  which has an  entry 
  $1$ at position $i$;
     and  $0$ otherwise. Then the sequence
   $\{e_i\}_{i\in \Z}$ is an orthonormal basis of
   $\ell^2$. Let $\{a_i\}_{i\in \Z}$ be an
   element in $  \ell^2_\kappa$; that is,
      \be\label{sig0ex}  
      \|a\|^2_{\ell^2_\kappa} =
   \sum_{i\in \Z} (1+|i|^2) a_i^2 <\infty.
  \ee
   Given
   $u= (u_i)_{i\in \Z}$, 
     $v=(v_i)_{i\in \Z}\in \ell^2$, denote by
   \be\label{sig0exa}
    \sigma (u) (v) = 
   (1+ \| u \|)^{-1}
   \sum_{i\in \Z} 
   a_i v_i e_i.
  \ee
   Then $\sigma (u):
   \ell^2 \to \ell^2$ is a Hilbert-Schmidt operator 
   with norm
         \be\label{sig1ex}  
   \|\sigma (u) \|_{\call_2
   (\ell^2, \ell^2)}
   = \| a \| (1+ \|u \|)^{-1}
   \le \|a\|,
   \quad
   \forall \ u\in \ell^2.
   \ee
   In addition, we have
   for all  $u, u_1, u_2 \in \ell^2$,
         \be\label{sig2ex}  
   \|\sigma (u_1) -\sigma (u_2)  \|_{\call_2
   (\ell^2, \ell^2)}
   = \| a \| \| u_1 -u_2\|,
   \ee
     \be\label{sig3ex}  
 \|1_{\{|\cdot |\ge k\}} \sigma (u)\|^2
 _{\call_2(\ell^2, \ell^2)}
 = 
 (1+ \|u \|)^{-2}
 \sum_{|i|\ge k}
 a_i^2 \le  \sum_{|i|\ge k}
 a_i^2 ,
 \ee
 and
   \be\label{sig4ex} 
 \|    \sigma (u)\|^2
 _{\call_2(\ell^2, \ell^2_\kappa )}
 = (1+ \|u \|)^{-2}
  \|a\|^2_{\ell^2_\kappa}
 \le \|a\|^2_{\ell^2_\kappa}.
\ee
It follows from \eqref{sig0ex}-\eqref{sig4ex}
that all conditions
\eqref{sig1}-\eqref{sig4} are satisfied
for $\sigma$ given by
\eqref{sig0exa}
with $L_\sigma =  \|a\|_{\ell^2_\kappa}$.

 We now rewrite 
    system \eqref{intr1}-\eqref{intr2}
      as
the   stochastic equation  in $\ell^2$:
 \be\label{intr3}
 d u^\eps (t)
 +\lambda   u^\eps (t) dt
 + \ Au^\eps (t)  dt
 +f(u^\eps(t) ) dt    =
 g+
   \sqrt{\eps}
   \sigma (u^\eps (t)) dW,
   \quad  u^\eps(0)=u_0\in\ell^2.
\ee

For $\eps=0$,
the deterministic  limiting system corresponding 
to \eqref{intr3} is given by 
 \be\label{intr4}
 {\frac { d u(t)}{dt}}
 +\lambda   u (t)  
 + \ Au (t)   
 +f(u(t) )     =
 g , \quad u(0)=u_0\in\ell^2.
\ee

   Given $\eps\in [0,1]$
 and 
  $u_0 \in L^2(\Omega,\calf_0; 
   \ell^2) $, 
   by a solution $u^\eps$ of \eqref{intr3}, we mean
   that 
   $u^\eps$ is 
   a
  continuous  $\ell^2$-valued  $\calf_t$-adapted
  process    such that
  $u^\eps\in L^2(\Omega, C([0, T], \ell^2))$  for each $T>0$, 
  and for all $t\ge 0$,
   $$
  u^\eps (t)
  +  \int_0^t 
 \left ( \lambda u^\eps (s) + Au^\eps (s) 
 +f(u^\eps(s) )       \right ) ds
 =u_0  + gt  
 +
 \sqrt{\eps}\int_0^t 
   \sigma (u^\eps (s)) dW(s) 
$$ 
in $\ell^2$,
$P$-almost surely.

If \eqref{f1}  and \eqref{sig1}-\eqref{sig2}
are fulfilled, then by the argument of 
   \cite[Theorem 2.5]{wan2019}, one can verify
   that 
   for each $u_0 \in L^2(\Omega,\calf_0; 
   \ell^2) $,
   the stochastic equation \eqref{intr3}
   has a unique solution  
   $u^\eps\in L^2(\Omega, C([0, T], \ell^2))$.
   
   Next, we derive the uniform estimates
   of solutions of \eqref{intr3}.
   
   \begin{lem}\label{est1}
 If \eqref{f1} and
   \eqref{sig1}-\eqref{sig2} are  fulfilled,  
   then  the solution $u^\eps(\cdot, 0,u_0)$  of \eqref{intr3}
  satisfies,  for all  $\eps\in [0,1]$
  and $t\ge 0$,
  $$
  \E ( \| u^\eps (t,0, u_0) \|^2)
  \le  e^{-\lambda t}
  \E(\|u_0\|^2)
  + \lambda^{-2} \|g\|^2
  + \lambda^{-1} L_\sigma^2.
  $$ 
 \end{lem}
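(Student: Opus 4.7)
The strategy is a standard It\^o energy estimate with integrating factor $e^{\lambda t}$. I would apply It\^o's formula to $\varphi(u)=\|u\|^2$ along the $\ell^2$-valued semimartingale $u^\eps$, then combine with $d e^{\lambda t}=\lambda e^{\lambda t}\,dt$ to obtain
\[
d(e^{\lambda t}\|u^\eps(t)\|^2) = e^{\lambda t}\Bigl[-\lambda\|u^\eps\|^2 - 2(Au^\eps,u^\eps) - 2(f(u^\eps),u^\eps) + 2(u^\eps,g) + \eps\|\sigma(u^\eps)\|^2_{\call_2(\ell^2,\ell^2)}\Bigr]dt + 2\sqrt{\eps}\,e^{\lambda t}(u^\eps,\sigma(u^\eps)\,dW(t)).
\]

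Next, I would exploit the structural hypotheses to discard or control each term on the right. The decomposition $A=\sum_{j=1}^N B_j^{*}B_j$ recorded in the preamble gives $(Au,u)=\sum_{j=1}^N\|B_j u\|^2\ge 0$, so that contribution is dropped. Condition \eqref{f1} yields $f_i(s)s\ge 0$, hence $(f(u^\eps),u^\eps)=\sum_{i\in\Z} f_i(u^\eps_i)u^\eps_i\ge 0$, which is also dropped. The forcing is handled by Young's inequality $2(u^\eps,g)\le \lambda\|u^\eps\|^2+\lambda^{-1}\|g\|^2$, precisely cancelling the remaining $-\lambda\|u^\eps\|^2$ term, while \eqref{sig1} together with $\eps\in[0,1]$ bounds the It\^o correction by $L_\sigma^2$.

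Taking expectations eliminates the stochastic integral, provided it is a true martingale rather than only a local one. Since $u^\eps\in L^2(\Omega,C([0,T],\ell^2))$ and $\sigma$ is uniformly bounded in Hilbert--Schmidt norm by \eqref{sig1}, the quadratic variation satisfies $\E\int_0^T \|\sigma(u^\eps(s))^{*} u^\eps(s)\|^2 ds \le L_\sigma^2\,\E\int_0^T \|u^\eps(s)\|^2 ds<\infty$, which legitimizes the standard localization argument. Integrating the simplified inequality then gives
\[
\E[e^{\lambda t}\|u^\eps(t)\|^2]\le \E\|u_0\|^2 + \int_0^t e^{\lambda s}(\lambda^{-1}\|g\|^2+L_\sigma^2)\,ds \le \E\|u_0\|^2 + \lambda^{-1}(\lambda^{-1}\|g\|^2+L_\sigma^2)e^{\lambda t},
\]
and dividing through by $e^{\lambda t}$ delivers the advertised bound.

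There is no genuinely hard step here: the sign conditions on $A$ and $f$ absorb the dissipation, and the rest is Young's inequality and elementary integration. The one point requiring care is the martingale property of $\int_0^t(u^\eps(s),\sigma(u^\eps(s))\,dW(s))$, which is settled by the a priori regularity of $u^\eps$ in $L^2(\Omega,C([0,T],\ell^2))$ together with \eqref{sig1}. Using the integrating factor $e^{\lambda t}$ rather than raw Gronwall is precisely what produces the clean exponential decay of the initial-data term toward the stationary constant $\lambda^{-2}\|g\|^2+\lambda^{-1}L_\sigma^2$.
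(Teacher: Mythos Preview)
Your proof is correct and follows essentially the same route as the paper: It\^o's formula for $\|u^\eps\|^2$, drop the nonnegative $(Au,u)$ and $(f(u),u)$ terms via \eqref{f1}, apply Young's inequality to $2(u^\eps,g)$, bound the correction term by \eqref{sig1}, and conclude with Gronwall (your integrating factor is the explicit form of this). You are slightly more careful than the paper in justifying that the stochastic integral has zero expectation.
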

 
   \begin{proof}
   By \eqref{f1} and  It\^{o}'s  formula,
   we obtain from  \eqref{intr3}  that
   $$
   {\frac d{dt}}
   \E
   \left (
   \| u^\eps (t)\|^2
   \right )
    + 2 \lambda  \E
   \left (
   \| u^\eps (t)\|^2
   \right )
   \le
   2 \E
   \left (
    u^\eps (t), g 
   \right )
   + \eps
    \E
   \left (
   \| \sigma (u^\eps (t)) \|^2_{\call_2(\ell^2,\ell^2)}
   \right ),
   $$
  which along with \eqref{sig1} implies that
    $$
   {\frac d{dt}}
   \E
   \left (
   \| u^\eps (t)\|^2
   \right )
    +   \lambda  \E
   \left (
   \| u^\eps (t)\|^2
   \right )
   \le \lambda^{-1} \|g \|^2
   +\eps L_\sigma^2.
   $$
   This together with the Gronwall inequality
   concludes the proof. 
 \end{proof}

 The next lemma is concerned with
 the uniform estimates on the tails of solutions.

 \begin{lem}\label{tail}
 Suppose \eqref{f1} and
   \eqref{sig1}-\eqref{sig3} are  valid.  
  Then for any
  $\delta>0$  and 
  any compact subset $\calk$ of $\ell^2$, there exists  
  $k_0=k_0(\delta, \calk) \ge 1$ such that for all $k\ge k_0$,
  $\eps \in [0,1]$
  and $t\ge 0$, the 
   solution $u^\eps (\cdot, 0, u_0)$  of \eqref{intr3}
   with $u_0 \in \calk$
  satisfies,   
  $$
 \sum_{|i| \ge k}  \E \left (
 | u^\eps_i(t, 0, u_0)|^2
 \right )  <\delta .
 $$ 
 \end{lem}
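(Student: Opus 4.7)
The plan is to adapt the standard cutoff/tail estimate technique for stochastic lattice systems to produce a Gronwall-type inequality for a weighted energy of the tail, uniform in $\eps \in [0,1]$ and in $u_0 \in \calk$. Choose a smooth cutoff $\rho : \R^+ \to [0,1]$ with $\rho(s) = 0$ for $0 \le s \le 1$, $\rho(s) = 1$ for $s \ge 2$, and $|\rho'|$ bounded, and define the truncated energy
\[
V_k(u) \;=\; \sum_{i \in \Z} \rho\!\left(\frac{|i|}{k}\right) |u_i|^2.
\]
I would then apply It\^o's formula to $V_k(u^\eps(t))$. This produces (i) a dissipative term $-2\lambda V_k(u^\eps)$, (ii) a term involving $(Au^\eps, \rho(|\cdot|/k) u^\eps)$, (iii) a nonlinear term involving $f_i(u^\eps_i) u^\eps_i$, (iv) a forcing term $2\sum_i \rho(|i|/k) g_i u^\eps_i$, (v) a quadratic variation $\eps \sum_i \rho(|i|/k) \|\sigma(u^\eps) e_i\|$-type contribution, and (vi) a martingale term which vanishes in expectation.

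For (iii), assumption \eqref{f1} forces $f_i(s)s \ge 0$, so this term may be discarded (it appears with the correct sign). For (ii), I would use the factorization $A = \sum_j B_j^* B_j$ together with discrete integration by parts; because only finitely many layers of indices $i$ carry a nonzero value of $\rho'(|i|/k)$ and $|\rho'| \le C$, the commutator between $\rho(|\cdot|/k)$ and the finite-difference operators $B_j$ yields a bound of the form $|(Au, \rho(|\cdot|/k) u)| \le (C/k) \|u\|^2$. For (iv), splitting by Cauchy-Schwarz gives a contribution bounded by $\lambda V_k(u^\eps) + C \sum_{|i|\ge k} g_i^2$, and since $g \in \ell^2$ the tail of $g$ is small for $k$ large. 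For (v), assumption \eqref{sig3} and the bound $\rho(|i|/k) \le \1_{\{|i|\ge k\}}$ give $\sum_i \rho(|i|/k)\|\sigma(u^\eps) e_i\|^2 \le \|\1_{\{|\cdot|\ge k\}} \sigma(u^\eps)\|_{\call_2}^2$, which tends to $0$ uniformly in $u^\eps$ as $k \to \infty$. Combining these and using Lemma \ref{est1} to absorb the $(C/k)\E\|u^\eps\|^2$ term (which stays bounded uniformly on compact $\calk$), I obtain a differential inequality
\[
\frac{d}{dt} \E V_k(u^\eps(t)) + \lambda\, \E V_k(u^\eps(t)) \;\le\; \eta(k),
\]
with $\eta(k) \to 0$ as $k \to \infty$, independent of $\eps$ and of $u_0 \in \calk$.

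Applying Gronwall then yields $\E V_k(u^\eps(t)) \le e^{-\lambda t} \E V_k(u_0) + \lambda^{-1}\eta(k)$. The last input needed is that compactness of $\calk \subset \ell^2$ implies uniformly small tails: for any $\delta > 0$, there exists $k_1$ with $\sup_{u_0 \in \calk} \sum_{|i|\ge k_1} |u_{0,i}|^2 < \delta$ (a standard fact via a finite $\delta$-net in $\ell^2$). Since $\sum_{|i|\ge 2k} |u^\eps_i|^2 \le V_k(u^\eps)$, the conclusion follows by choosing $k$ large enough so that both $\E V_k(u_0) < \delta$ (using the tail property of $\calk$) and $\lambda^{-1}\eta(k) < \delta$. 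The main obstacle is the Laplacian term (ii): the solution operator is not compact, so we cannot use compact Sobolev embeddings, and we must carefully exploit the localization of $\rho'(|\cdot|/k)$ to produce the crucial $O(1/k)$ factor while carrying along only the uniform second-moment bound from Lemma \ref{est1}; the compactness of $\calk$ is what makes this uniform second moment genuinely $u_0$-independent rather than merely locally bounded.
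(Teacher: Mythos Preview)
Your proposal is correct and follows essentially the same route as the paper: the paper also introduces the cutoff $\rho$, applies It\^o's formula to the truncated energy, drops the drift via $f_i(s)s\ge 0$, bounds the discrete Laplacian cross-term by $c_1 k^{-1}\E\|u^\eps(t)\|^2$ (then invokes Lemma~\ref{est1}), controls the It\^o correction by \eqref{sig3} and the forcing by the tail of $g$, and closes with Gronwall plus the uniform tail-smallness of initial data coming from compactness of $\calk$. The only cosmetic difference is that the paper weights by $\rho(|i|/k)^2$ rather than $\rho(|i|/k)$, which is immaterial.
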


 \begin{proof}
 Let $\rho: \R \to [0,1]$  be a smooth function such that 
 \be\label{cutoff}
 \rho (s)=0 \ \ \text{ if } \  |s| \le 1;    \   \ 
 \rho (s) = 1 \  \  
 \text{ if } \  |s| \ge 2.
\ee
 Given $k\in \N$,  we write 
 $\rho_k u
 =\left (\rho(\frac {|i|}k) u_i
 \right )_{i\in \Z}$
 for any  $u=(u_i)_{i\in \Z}$. 
 By \eqref{f1}, \eqref{intr3} and the It\^{o} formula
 we get 
 \be\label{tail p1}
 {\frac {d}{dt}}
 \E (\|\rho_k u^\eps (t) \|^2)
 + 2 
 \sum_{j=1}^N  \E ( B_j
  u^\eps (t),  \  B_j (\rho_k^2 u^\eps (t) ) )
   +2 \lambda  \E( \| \rho_k u^\eps (t) \|^2 )
 $$
 $$
 \le  2 \E (\rho_k g, \rho_k  u^\eps (t)) 
 +  \eps 
 \E ( \|  \rho_k \sigma (u^\eps (t)) \|^2_{\call_2(\ell^2,
 \ell^2)}  ) .
 \ee
 After simple calculations,  we find
 that for all $t\ge 0$, 
  \be\label{tail p2}
-2 
 \sum_{j=1}^N  \E ( B_j
  u^\eps (t),  \  B_j (\rho_k^2 u^\eps (t) ) )
  \le {\frac {c_1}k} 
    \E ( \|u^\eps (t) \|^2) 
    \ee
    where $c_1$ is 
    a positive number independent of $k$.
     By \eqref{tail p2}
     and  Lemma \ref{est1} we get
     for all $t\ge 0$,
    \be\label{tail p2a}
-2 
 \sum_{j=1}^N  \E ( B_j
  u^\eps (t),  \  B_j (\rho_k^2 u^\eps (t) ) ) 
    \le {\frac {c_2}k},
    \ee
    where $c_2=c_2(\calk)>0$ is independent of $k$.

        By \eqref{sig3} we have
        for all $\eps\in [0,1]$ and $t\ge 0$,
     \be\label{tail p3}
      \eps 
 \E ( \|  \rho_k \sigma (u^\eps (t)) \|^2_{\call_2(\ell^2,
 \ell^2)}  )
 \le
 \sup_{u\in \ell^2}
 \|1_{\{|\cdot|\ge k\}}
 \sigma (u)\|^2_{\call_2(\ell^2,\ell^2)}
 \to 0,
 \ \ \text{as } \   k\to \infty.
\ee
 On the other hand, by Young's inequality we have
   \be\label{tail p4}
  2 \E (\rho_k g, \rho_k  u^\eps (t)) 
 \le \lambda 
 \E (\| \rho_k u^\eps (t)\|^2)
 + {\frac {1}{\lambda}}
  \sum_{|i|\ge k} 
  g_i^2 .
\ee  

It follows from \eqref{tail p1}
and \eqref{tail p2a}-\eqref{tail p4}
that 
for every $\delta>0$,
there exists $k_0=k_0 (\delta, \calk, g)\ge 1$
such that for all $k\ge k_0$ and $t\ge 0$,
  \be\label{tail p5}  
 {\frac {d}{dt}}
 \E (\|\rho_k  u^\eps (t) \|^2)
 +   \lambda  \E( \| \rho_k  u^\eps (t) \|^2 )
  \le  \delta.
 \ee
  By \eqref{tail p5}  and Gronwall\rq{}s inequality, we infer
 that
 for all $k \ge k_0$      and $t\ge 0$,
 \be\label{tail p6}
  \E (\|\rho_k u^\eps (t) \|^2)
  \le e^{- \lambda t}  \E (\|\rho_k u_0  \|^2)
  + {\frac {\delta}{\lambda}} 
   \le  \sum_{|i| \ge k} 
   |  u_{0,i}  |^2 
  + {\frac {\delta}{\lambda}}.
  \ee
  By assumption,  we know that
  $u_0 \in \calk$  and $\calk$ is   compact
  in  $\ell^2$. Therefore,  by \eqref{tail p6} we find
  that  there
  exists $k_1=k_1(\delta, \calk, g) \ge k_0$
  such that   for all $k \ge k_1$, $t\ge 0$
  and $u_0\in \calk$,
  $$
 \E \left (
 \sum_{|i| \ge  2k} |u^\eps_i(t, 0,u_0) |^2 \right )
 \le 
  \E (\|\rho_k u^\eps   (t, 0,u_0) \|^2)
  \le \delta
  + {\frac {\delta}{\lambda}},
 $$
 as desired. 
 \end{proof}
 
 By the argument of Lemma \ref{tail}, we also obtain
 the following uniform tail-ends estimates.

 \begin{lem}\label{taila}
 Suppose \eqref{f1} and
   \eqref{sig1}-\eqref{sig3} are  valid.  
  Then for every
  $\delta>0$  and $R>0$,   there exist
  $T=T(\delta, R)>0$ and  
  $k_0=k_0(\delta) \ge 1$ such that for all 
  $t\ge T$,  $k\ge k_0$ and
  $\eps \in (0,1)$, 
   the 
   solution $u^\eps (\cdot, 0, u_0)$  of \eqref{intr3}
   with $\|u_0\|_{L^2(\Omega, \ell^2)} \le R  $
  satisfies,   
  $$
 \sum_{|i| \ge k}  \E \left (
 | u^\eps_i(t, 0, u_0)|^2
 \right )  <\delta .
 $$ 
 \end{lem}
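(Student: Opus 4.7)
The proof will follow the same blueprint as Lemma \ref{tail}, but replace the use of compactness of the initial data set by the exponential decay in time that is already implicit in the energy estimate. First I would apply the It\^o formula to $\|\rho_k u^\eps(t)\|^2$ using the smooth cutoff $\rho$ from \eqref{cutoff}, reproducing the inequality \eqref{tail p1}. The cross-term estimate \eqref{tail p2} is purely algebraic and requires no change; to upgrade it to the uniform form \eqref{tail p2a}, I need an $L^2$ bound on $u^\eps(t)$ that is uniform over the relevant initial data. Here, in place of the compactness of $\calk$, I invoke Lemma \ref{est1}, which for $\|u_0\|_{L^2(\Omega,\ell^2)} \le R$ gives
\[
\E\bigl(\|u^\eps(t)\|^2\bigr) \le R^2 + \lambda^{-2}\|g\|^2 + \lambda^{-1} L_\sigma^2
\]
uniformly in $t\ge 0$ and $\eps\in[0,1]$. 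Thus the quantity $c_2$ in \eqref{tail p2a} now depends on $R$ (and $g$, $L_\sigma$, $\lambda$) rather than on $\calk$. The remaining two bounds \eqref{tail p3} and \eqref{tail p4} are unchanged, and in particular the one coming from \eqref{sig3} and the tails of $g$ yields $k_0=k_0(\delta)$ independent of the initial data.

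Combining these estimates as in the proof of Lemma \ref{tail}, I obtain, for every $\delta>0$, a $k_0=k_0(\delta)\ge 1$ such that for all $k\ge k_0$, $t\ge 0$ and $\eps\in(0,1)$,
\[
\frac{d}{dt} \E\bigl(\|\rho_k u^\eps(t)\|^2\bigr) + \lambda \E\bigl(\|\rho_k u^\eps(t)\|^2\bigr) \le \delta,
\]
which is the analogue of \eqref{tail p5}. Gronwall's inequality then gives
\[
\E\bigl(\|\rho_k u^\eps(t)\|^2\bigr) \le e^{-\lambda t}\E\bigl(\|\rho_k u_0\|^2\bigr) + \frac{\delta}{\lambda} \le e^{-\lambda t} R^2 + \frac{\delta}{\lambda},
\]
where in the last step I use the crude bound $\|\rho_k u_0\| \le \|u_0\|$, which is valid uniformly over the $L^2$-bounded set $\{\|u_0\|_{L^2(\Omega,\ell^2)}\le R\}$ without any compactness assumption.

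At this point the desired result is immediate: choose $T=T(\delta,R)>0$ large enough that $e^{-\lambda T} R^2 < \delta/\lambda$. Then for all $t\ge T$, $k\ge k_0$ and $\|u_0\|_{L^2(\Omega,\ell^2)}\le R$, one has $\E(\|\rho_k u^\eps(t)\|^2) \le 2\delta/\lambda$, and consequently, since $\rho(|i|/k)=1$ for $|i|\ge 2k$,
\[
\sum_{|i|\ge 2k}\E\bigl(|u^\eps_i(t,0,u_0)|^2\bigr) \le \E\bigl(\|\rho_k u^\eps(t)\|^2\bigr) \le \frac{2\delta}{\lambda}.
\]
Relabeling $\delta$ and $k_0$ (the factor of $2$ in the index is absorbed into $k_0$) yields the claim. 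There is no serious obstacle in the argument, since all the analytic work is already contained in Lemma \ref{tail}; the only conceptual point is to recognize that the compactness of $\calk$ in Lemma \ref{tail} was used only to make the contribution $\sum_{|i|\ge k}|u_{0,i}|^2$ of the initial data small uniformly, and that this role is played, at the cost of waiting time $T$, by the exponential decay factor $e^{-\lambda t}$ when initial data are merely $L^2$-bounded.
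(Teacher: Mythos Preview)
Your overall strategy is correct and very close to the paper's, but there is one inconsistency worth fixing. You correctly note that, with only the bound $\|u_0\|_{L^2(\Omega,\ell^2)}\le R$, Lemma \ref{est1} gives $\E(\|u^\eps(t)\|^2)\le R^2+\lambda^{-2}\|g\|^2+\lambda^{-1}L_\sigma^2$, so that the constant $c_2$ in the analogue of \eqref{tail p2a} depends on $R$. But then, when you combine the terms to obtain the differential inequality
\[
\frac{d}{dt}\E(\|\rho_k u^\eps(t)\|^2)+\lambda\E(\|\rho_k u^\eps(t)\|^2)\le \delta,
\]
the term $c_2(R)/k$ is part of the right-hand side, so the threshold $k_0$ needed to make it $\le\delta$ also depends on $R$. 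Your claim that $k_0=k_0(\delta)$ is independent of the initial data is therefore not justified by your argument; as written you only get $k_0=k_0(\delta,R)$, which is weaker than the lemma's statement.

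The paper avoids this by using the \emph{decaying} form of Lemma \ref{est1}: since $\E(\|u^\eps(t)\|^2)\le e^{-\lambda t}R^2+\lambda^{-2}\|g\|^2+\lambda^{-1}L_\sigma^2$, one first chooses $T_1=T_1(R)$ so that $e^{-\lambda t}R^2\le 1$ for $t\ge T_1$; then, for $t\ge T_1$, the cross-term is bounded by $c_1 k^{-1}(1+\lambda^{-2}\|g\|^2+\lambda^{-1}L_\sigma^2)$, which is independent of $R$. This is exactly \eqref{taila p1}. From there one picks $k_0=k_0(\delta)$ and obtains \eqref{taila p2} only for $t\ge T_2(\delta,R)\ge T_1$, applying Gronwall from $T_2$ rather than from $0$. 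The $R$-dependence is thus absorbed entirely into $T$, as the lemma asserts. Your argument is easily repaired by inserting this preliminary waiting step before choosing $k_0$.
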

 
 \begin{proof}
 By \eqref{tail p2}
 and Lemma \ref{est1} we see
 that
 for    $u_0 $
 with $\|u_0\|_{L^2(\Omega, \ell^2)}
 \le R$ and $t\ge 0$, 
  $$
-2 
 \sum_{j=1}^N  \E ( B_j
  u^\eps (t),  \  B_j (\rho_k^2 u^\eps (t) ) )
  \le {\frac {c_1}k}  
  \left (
  e^{-\lambda t}
  R
  + \lambda^{-2} \|g\|^2
  +\lambda^{-1} L_\sigma^2
  \right ),
  $$
  and hence there exists $T_1=T_1(R)>0$
  such that for all $t\ge T_1$,
  \be\label{taila p1}
-2 
 \sum_{j=1}^N  \E ( B_j
  u^\eps (t),  \  B_j (\rho_k^2 u^\eps (t) ) )
  \le {\frac {c_1}k}  
  \left (
  1
  + \lambda^{-2} \|g\|^2
  +\lambda^{-1} L_\sigma^2
  \right ).
    \ee
   By    
  \eqref{tail p3}-\eqref{tail p4}
  and  \eqref{taila p1} we infer 
  from \eqref{tail p1}
  that
 for every $\delta>0$
 and $R>0$,
there exist
$T_2=T_2(\delta, R) \ge T_1$
and $k_0=k_0 (\delta, g)\ge 1$
such that for all $t\ge T_2$ and
$k\ge k_0$,
  \be\label{taila p2}  
 {\frac {d}{dt}}
 \E (\|\rho_k  u^\eps (t) \|^2)
 +   \lambda  \E( \| \rho_k  u^\eps (t) \|^2 )
  \le  \delta.
 \ee
 Then the desired inequality follows from
   \eqref{taila p2}  and Gronwall\rq{}s lemma.
  \end{proof}
 
 We now present the existence of invariant measures
 for \eqref{intr3}.
 
  \begin{thm}\label{exim}
  If   \eqref{f1} and
   \eqref{sig1}-\eqref{sig3} are  fulfilled,  
  then  for every $\eps\in [0,1]$,
  system    \eqref{intr3} 
  has an invariant measure on $\ell^2$.
  \end{thm}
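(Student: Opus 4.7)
The plan is to prove Theorem \ref{exim} by the Krylov-Bogolyubov method. First, I would observe that under \eqref{f1} and \eqref{sig1}--\eqref{sig2}, the argument of \cite[Theorem 2.5]{wan2019} already cited in the paper guarantees pathwise uniqueness of solutions to \eqref{intr3} for each $u_0 \in \ell^2$, so the solution map generates a Markov transition semigroup $\{P_t^\eps\}_{t \ge 0}$ on $\ell^2$. The Feller property of $P_t^\eps$ on $C_b(\ell^2)$ follows from continuous dependence on initial data in mean square: applying It\^o's formula to $\|u^\eps(t,0,u_0) - u^\eps(t,0,v_0)\|^2$ and using the one-sided bound $f_i^\prime \ge -\gamma$ from \eqref{f1} together with the Lipschitz bound \eqref{sig2} on $\sigma$ yields a Gronwall estimate of the form $\E\|u^\eps(t,0,u_0) - u^\eps(t,0,v_0)\|^2 \le e^{Ct}\|u_0 - v_0\|^2$.

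Next, fix $\eps \in [0,1]$, take $u_0 = 0$, and define the Krylov-Bogolyubov time-averaged measures on $\ell^2$ by
\begin{equation*}
\mu_T^\eps(E) = \frac{1}{T}\int_0^T P(u^\eps(t,0,0) \in E)\,dt, \qquad T > 0,
\end{equation*}
for Borel sets $E \subseteq \ell^2$. The heart of the proof is to show $\{\mu_T^\eps\}_{T \ge 1}$ is tight in $\ell^2$. I would use the characterization that $K \subset \ell^2$ is relatively compact if and only if $K$ is bounded and $\sup_{u \in K}\sum_{|i| \ge k}|u_i|^2 \to 0$ as $k \to \infty$. Given $\delta > 0$, Lemma \ref{est1} with $u_0 = 0$ yields a constant $R = R(\lambda, \|g\|, L_\sigma) > 0$ such that $\E\|u^\eps(t,0,0)\|^2 \le R$ for all $t \ge 0$ and $\eps \in [0,1]$. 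Lemma \ref{taila} then supplies, for each integer $n \ge 1$, a threshold $T_n$ and a cutoff index $k_n$ such that $\sum_{|i| \ge k_n}\E|u_i^\eps(t,0,0)|^2 < 2^{-n}\delta$ for all $t \ge T_n$. Combining these with Chebyshev's inequality, one builds a compact set
\begin{equation*}
\calk_\delta = \left\{u \in \ell^2 : \|u\|^2 \le \tfrac{2R}{\delta},\ \sum_{|i| \ge k_n}|u_i|^2 \le \tfrac{2^{1-n}}{\delta}\text{ for all }n \ge 1\right\}
\end{equation*}
and checks that $\mu_T^\eps(\calk_\delta) \ge 1 - \delta$ uniformly in $T$ (the contribution from $t \in [0, \max_n T_n]$ being absorbed by enlarging finitely many $k_n$ via Lemma \ref{tail} applied to the compact set $\{0\}$).

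With tightness in hand, Prokhorov's theorem produces a subsequence $\mu_{T_j}^\eps \to \mu^\eps$ weakly in $\ell^2$, and the Feller property together with the standard Krylov-Bogolyubov argument (namely $\int P_s^\eps \varphi\,d\mu_{T_j}^\eps - \int \varphi\,d\mu_{T_j}^\eps = O(s/T_j)$ for $\varphi \in C_b(\ell^2)$) shows $\mu^\eps$ is invariant for $\{P_t^\eps\}$. The main obstacle is precisely the tightness step, since bounded sets in $\ell^2$ are not precompact; this is where the uniform tail-ends estimate \ref{taila} is essential and is the reason hypothesis \eqref{sig3} was imposed. All other steps are routine consequences of the Markov/Feller framework combined with \eqref{f1} and \eqref{sig1}--\eqref{sig2}.
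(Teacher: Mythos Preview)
Your approach is essentially the same as the paper's: the paper simply invokes the argument of \cite[Theorem 4.8]{wan2019}, using the uniform bound of Lemma \ref{est1} together with the tail-ends estimate of Lemma \ref{tail} to obtain tightness of the Krylov--Bogolyubov averages, and then concludes by the Feller property. Two small points are worth tightening. First, since you start from the deterministic initial datum $u_0=0$, the singleton $\{0\}$ is already compact, so Lemma \ref{tail} gives the tail estimate uniformly for \emph{all} $t\ge 0$; the detour through Lemma \ref{taila} and the patching on $[0,\max_n T_n]$ are unnecessary (and problematic as written, since $\sup_n T_n$ need not be finite). Second, your Chebyshev bookkeeping does not close: with $\E\sum_{|i|\ge k_n}|u_i^\eps|^2<2^{-n}\delta$ and threshold $2^{1-n}/\delta$, each tail event has probability at most $\delta^2/2$, independent of $n$, so the union bound diverges. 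Choosing instead the tail smallness $4^{-n}\delta$ and set threshold $2^{-n}$ (or any pair making the failure probabilities summable) fixes this immediately.
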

  
  \begin{proof}
  The proof is similar to that
  of \cite[Theorem 4.8]{wan2019},
  by using  the uniform estimates given by
  Lemma \ref{est1} and  the uniform
  tail-ends estimates
  given by Lemma \ref{tail}.
  The details are omitted here.
  \end{proof}

 Given $\eps\in [0,1]$, 
let $\calm^\varepsilon$ be the collection of all invariant
measures of \eqref{intr3}
corresponding to $\eps$.
In the sequel, we will prove the set
 $\bigcup\limits_{\varepsilon\in
[0,1]}\calm ^\varepsilon$ is tight.

\begin{lem} \label{intig}
If   \eqref{f1} and
   \eqref{sig1}-\eqref{sig3} are  fulfilled,  
then  
    $\bigcup\limits_{\varepsilon\in [0,1]} \calm ^\varepsilon$
  is tight.
\end{lem}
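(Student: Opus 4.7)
The plan is to verify the tightness criterion in $\ell^2$: for every $\delta > 0$ one must exhibit a compact set $\calk \subset \ell^2$ with $\mu^\eps(\calk) \ge 1-\delta$ uniformly over $\eps \in [0,1]$ and $\mu^\eps \in \calm^\eps$. Since compactness in $\ell^2$ is characterised by boundedness together with uniformly small tails $\sum_{|i|\ge k} u_i^2$, the problem reduces to two uniform estimates: (i) $\mu^\eps(\{\|u\|>R\})$ is small for $R$ large, and (ii) $\mu^\eps(\{\sum_{|i|\ge k} u_i^2 > \eta\})$ is small for $k$ large.

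For (i) I exploit invariance, $\mu^\eps(A) = \int P(u^\eps(t,0,u_0) \in A)\, d\mu^\eps(u_0)$, together with Lemma \ref{est1}. Splitting the integration at a truncation level $M$ and using Markov's inequality,
\begin{equation*}
\mu^\eps(\{\|u\|>R\}) \le \frac{1}{R^2}\bigl(e^{-\lambda t}M^2 + C_0\bigr) + \mu^\eps(\{\|u_0\|>M\}),
\end{equation*}
where $C_0 = \lambda^{-2}\|g\|^2 + \lambda^{-1}L_\sigma^2$. Because the left-hand side depends on neither $t$ nor $M$, letting first $t \to \infty$ and then $M \to \infty$ (admissible since $\mu^\eps(\ell^2)=1$) yields the uniform bound $\mu^\eps(\{\|u\|>R\}) \le C_0/R^2$. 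This sidesteps having to show directly that $\mu^\eps$ has a finite second moment, which is not obvious a priori.

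For (ii) I argue analogously using Lemma \ref{taila}. Given $\eta, \delta > 0$, choose $M$ with $C_0/M^2 < \delta/2$ and invoke Lemma \ref{taila} to obtain $T = T(\eta\delta/2, M)$ and $k_0 = k_0(\eta\delta/2)$ so that $\sum_{|i|\ge k} \E|u^\eps_i(t,0,u_0)|^2 < \eta\delta/2$ for all $t \ge T$, $k \ge k_0$, $\eps \in [0,1]$, and deterministic $u_0$ with $\|u_0\| \le M$. Invariance at time $t \ge T$ together with Markov's inequality and the same $B_M$ splitting produces
\begin{equation*}
\mu^\eps\!\left(\left\{\sum_{|i|\ge k} u_i^2 > \eta\right\}\right) \le \frac{1}{\eta}\int_{\|u_0\|\le M}\sum_{|i|\ge k}\E|u^\eps_i(t,0,u_0)|^2\, d\mu^\eps + \mu^\eps(\{\|u_0\|>M\}) \le \delta,
\end{equation*}
uniformly in $\eps \in [0,1]$ and $\mu^\eps \in \calm^\eps$.

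With (i) and (ii) in hand I assemble the compact set. Given $\delta > 0$, pick $R$ with $C_0/R^2 < \delta/2$, and for each $n \ge 1$ choose $k_n$ from (ii) with $\eta = 1/n$ and tolerance $\delta/2^{n+1}$. The set
\begin{equation*}
\calk = \left\{u \in \ell^2 : \|u\| \le R,\ \sum_{|i|\ge k_n} u_i^2 \le \tfrac{1}{n}\ \text{for every }n\ge 1\right\}
\end{equation*}
is closed, bounded, and has uniformly vanishing tails, hence compact in $\ell^2$. A union bound gives $\mu^\eps(\calk^c) \le \delta/2 + \sum_{n\ge 1}\delta/2^{n+1} = \delta$ uniformly. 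The main technical point throughout is securing uniformity over \emph{all} invariant measures simultaneously; this is achieved by performing every estimate on deterministic $u_0$ in bounded balls and then absorbing the $\mu^\eps$-tail outside $B_M$ through the uniform bound from step (i).
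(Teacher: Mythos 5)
Your proof is correct and follows essentially the same path as the paper: the key ingredients are Lemma \ref{est1} for uniform second-moment bounds, Lemma \ref{taila} for uniform tail smallness, the invariance identity $\mu^\eps(A)=\int P(u^\eps(t,0,u_0)\in A)\,d\mu^\eps(u_0)$, and a compact set built as an intersection of uniformly-small-tail balls. The one place where you add value is the explicit truncation-at-$\|u_0\|\le M$ argument followed by letting $t\to\infty$ and then $M\to\infty$: this cleanly handles the fact that the time thresholds in Lemmas \ref{est1} and \ref{taila} depend on $\|u_0\|$, and it sidesteps any question about finiteness of $\int\|u_0\|^2\,d\mu^\eps$. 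The paper instead defers this bookkeeping to a cited result (Theorem 4.4 of \cite{chen1}), so your version is the more self-contained of the two, but it is the same proof in substance.
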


\begin{proof}
   By Lemma \ref{taila} we see that
   for every $\delta\in (0,1)$,
  $u_0 \in \ell^2$ and    $k\in \N$,
  there exist
 $T_k=T_k(\delta,  k,  u_0)>0$
 and     $n_k= n_k( \delta, k)\ge 1$ such that
 for all $t\geq T_k$  and $\eps\in  [0,1]$,
 \be\label{intig p1}
 \sum_{|i|\ge n_k}
   \mathbb{E}
              \left( 
              |u^\eps_i (t, 0, u_0)|^2
              \right ) 
    \le \frac{\delta} { 2^{4k} }.
\ee
By Lemma \ref{est1} we know that 
   there exists $T_0 =T_0(u_0)>0$ such that
   for all $t\ge T_0$   and   $\eps \in [0,1]$,
\be\label{intig p2}
   \mathbb{E}
             \left( \| u^{\eps} (t, 0,  u_0)\| ^2
             \right)
   \le c_1,  
\ee
where $c_1= 1+ \lambda^{-2} \|g \|^2
   + \lambda^{-1} L_\sigma^2$.
  For every
  $\delta>0$  and $k\in \N$,
  denote by 
\be \label{intig p3}
    \caly_k^{\delta} 
    = \left\{ 
    u=(u_i)_{i\in \Z} \in \ell^2 :  \
      u_i=0 \ \text{for} \ |i| \geq n_k \
                                      \text{and}\
                          \|    u \|
                          \le  \frac{2^k\sqrt{c_1}}{\sqrt{\delta}}
                               \right\},
\ee
  \be \label{intig p4}
    \calz_k^{\delta}
     =\left\{ u \in \ell^2:\
                             \| u-v \|
                             \le  \frac{1}{2^k}
                           \ \text{for\ some} \
                             v\in \caly_k^{\delta} \right \},
\ee
and
\be \label{intig p5}
    \calk^{\delta}=\bigcap_{k=1}^{ \infty}  
\calz_k^{\delta}.
\ee

By \eqref{intig p3}-\eqref{intig p5} we find that 
  $\calk^\delta$ is compact 
in  $\ell^2$.   Furthermore, by
\eqref{intig p1}-\eqref{intig p5}
and 
the argument of \cite[Theorem 4.4]{chen1},
one can verify that
  that for all $\eps \in [0,1]$ 
 and   $\mu^\eps \in \calm^\eps $,
$$ 
\mu^\eps( \calk^\delta  ) > 1- \delta,
$$
which completes the proof.
\end{proof}
  
    Next, we investigate the limiting behavior
    of a family 
    $\{\mu^\eps\}_{\eps\in (0,1)}$
    of invariant measures of \eqref{intr3}
    as $\eps \to 0$, 
    for which the following convergence of solutions
    is needed.

\begin{lem}\label{csol}
If   \eqref{f1} and
   \eqref{sig1}-\eqref{sig3} are  fulfilled,  
then  for all 
  $t\ge  0$  and  $\delta>0$,
 $$
   \lim_{\eps  \rightarrow  0 }
   \sup_{u_0 \in \ell^2} 
   P \left ( \{\omega\in \Omega :\
    \|
    u^{\eps}(t,0,
    u_0) - u (t, 0, u_0  )\|
     \ge  \delta  \}
   \right )= 0,
 $$
 where $u(\cdot, 0, u_0)$ is the solution
 of \eqref{intr4}.
\end{lem}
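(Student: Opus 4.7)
The plan is to estimate $\E\|u^\eps(t,0,u_0) - u(t,0,u_0)\|^2$ uniformly in $u_0$ and then apply Chebyshev's inequality. The key point is that although $f$ has arbitrary growth, the one-sided bound $f_i'(s)\ge -\gamma$ in \eqref{f1} gives a dissipative estimate for the difference that does not depend on the size of $u_0$.

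Set $v^\eps(t) = u^\eps(t,0,u_0) - u(t,0,u_0)$. Subtracting \eqref{intr4} from \eqref{intr3}, we have $v^\eps(0)=0$ and
\begin{equation*}
dv^\eps + \lambda v^\eps\,dt + Av^\eps\,dt + (f(u^\eps) - f(u))\,dt = \sqrt{\eps}\,\sigma(u^\eps)\,dW.
\end{equation*}
First I would apply It\^o's formula to $\|v^\eps\|^2$. The decomposition $A=\sum_{j=1}^N B_j^* B_j$ gives $(Av^\eps, v^\eps) = \sum_j \|B_j v^\eps\|^2 \ge 0$, and the mean value theorem combined with $f_i'\ge -\gamma$ yields
\begin{equation*}
(f(u^\eps) - f(u), v^\eps) = \sum_{i\in \Z}(f_i(u^\eps_i) - f_i(u_i))(u^\eps_i - u_i) \ge -\gamma \|v^\eps\|^2.
\end{equation*}
Combined with the boundedness \eqref{sig1} of $\sigma$, these give
\begin{equation*}
d\|v^\eps\|^2 \le 2(\gamma - \lambda)\|v^\eps\|^2\,dt + \eps L_\sigma^2\,dt + 2\sqrt{\eps}\,(v^\eps, \sigma(u^\eps)\,dW).
\end{equation*}

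Next I would take expectation, killing the stochastic integral, to obtain
\begin{equation*}
\frac{d}{dt}\E\|v^\eps(t)\|^2 \le 2(\gamma-\lambda)\,\E\|v^\eps(t)\|^2 + \eps L_\sigma^2.
\end{equation*}
Since $v^\eps(0)=0$, Gronwall's inequality yields $\E\|v^\eps(t)\|^2 \le \eps\, L_\sigma^2\, C_\lambda(t)$, where $C_\lambda(t) := \int_0^t e^{2(\gamma-\lambda)(t-s)}\,ds$ is finite and \emph{independent of $u_0$}. Finally, Chebyshev's inequality gives
\begin{equation*}
\sup_{u_0\in \ell^2} P\bigl(\|u^\eps(t,0,u_0) - u(t,0,u_0)\| \ge \delta\bigr) \le \frac{\eps L_\sigma^2 C_\lambda(t)}{\delta^2} \longrightarrow 0 \quad \text{as } \eps\to 0.
\end{equation*}

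There is no serious obstacle here; the statement is slightly stronger than one might initially expect (uniform over \emph{all} $u_0\in\ell^2$ rather than over bounded sets) but this comes for free because the dissipativity estimate for the difference $v^\eps$ uses only the one-sided Lipschitz constant $\gamma$ and the uniform bound $L_\sigma$, neither of which depends on $\|u_0\|$. The mild caveat is justifying the application of It\^o's formula in $\ell^2$ for the (not everywhere differentiable in the strong sense) nonlinearity $f$; this can be handled exactly as in the existence proof referenced after \eqref{intr4} (e.g., via finite-dimensional truncation and a limiting argument), or simply by noting that the solutions lie in $C([0,T];\ell^2)$ so the scalar It\^o formula applies to $\|v^\eps\|^2$ componentwise and the energy identity is obtained by monotone limits using \eqref{f1}.
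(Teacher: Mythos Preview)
Your proof is correct and follows essentially the same route as the paper: apply It\^o's formula to $\|u^\eps-u\|^2$, use $(Av,v)\ge 0$ and the one-sided bound $f_i'\ge -\gamma$ to control the drift, bound the It\^o correction by $\eps L_\sigma^2$ via \eqref{sig1}, take expectations, apply Gronwall from the zero initial condition, and finish with Chebyshev. The only cosmetic difference is that you retain the $-\lambda$ contribution and obtain the constant $C_\lambda(t)=\int_0^t e^{2(\gamma-\lambda)(t-s)}\,ds$, whereas the paper drops it and records the cruder bound $\eps\gamma^{-1}L_\sigma^2 e^{\gamma t}$.
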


    \begin{proof}
    By  \eqref{f1},
    \eqref{intr3}-\eqref{intr4}
    and  the It\^{o} formula we get
    $$
    {\frac d{dt}}
    \E \left (
      \|
    u^{\eps}(t,
    u_0) - u (t,  u_0  )\|^2
    \right )
    \le
    \gamma
    \E \left (
      \|
    u^{\eps}(t,
    u_0) - u (t,  u_0  )\|^2
    \right )
    +\eps
    \E \left (
      \| \sigma (
    u^{\eps}(t,
    u_0) )\|^2_{\call_2(\ell^2,\ell^2)}
    \right ),
    $$
    which along with \eqref{sig1}
    and Gronwall's inequality shows that
    for all $t\ge 0$,
   \be\label{csol p1}
    \sup_{u_0 \in \ell^2} \E \left (
      \|
    u^{\eps}(t,
    u_0) - u (t,  u_0  )\|^2
    \right )
    \le \eps \gamma^{-1} L_\sigma^2
    e^{\gamma t}.
    \ee
    Then for every $\delta>0$, we obtain
    from \eqref{csol p1} that
    $$ 
   \sup_{u_0 \in \ell^2} 
   P \left ( \{\omega\in \Omega :\
    \|
    u^{\eps}(t,
    u_0) - u (t, u_0  )\|
     \ge  \delta  \}
   \right )
   \le
   \eps \delta^{-2}\gamma^{-1} L_\sigma^2
    e^{\gamma t} \to 0,
 $$
 as $\eps \to 0$, as desired.
    \end{proof}

 Next, we show that any 
   limit of a sequence of invariant measures
of the stochastic system  \eqref{intr3} must be
an   invariant measure of the limiting system
\eqref{intr4} 
as $\eps \to 0$.

\begin{thm}
\label{limi}
Suppose  \eqref{f1} and
   \eqref{sig1}-\eqref{sig3} are  
   valid.
   If $\eps_n \rightarrow  0 $
and $\mu^{\eps_n} \in \calm^{\eps_n}$, then
there exists a  subsequence $\eps_{n_k}$ and
an  invariant measure
$\mu^{0} \in \calm^0$ such that
$\mu^{\eps_{n_k}}\rightarrow \mu^{0}$ weakly.
\end{thm}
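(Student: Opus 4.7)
The plan is to combine the tightness from Lemma \ref{intig}, Prokhorov's theorem, and the uniform convergence of solutions from Lemma \ref{csol} to pass to the limit in the invariance identity.

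First, by Lemma \ref{intig} the family $\bigcup_{\eps\in[0,1]} \calm^\eps$ is tight in $\ell^2$, so by Prokhorov's theorem the sequence $\{\mu^{\eps_n}\}$ is relatively compact in the weak topology. Extract a subsequence (still denoted $\mu^{\eps_n}$, or rather $\mu^{\eps_{n_k}}$ to match the statement) such that $\mu^{\eps_{n_k}}\to \mu^0$ weakly for some probability measure $\mu^0$ on $\ell^2$.

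Next, I would verify that $\mu^0$ is an invariant measure of the deterministic limiting system \eqref{intr4}. Denote by $P^\eps_t$ the Markov transition operator for \eqref{intr3} at level $\eps$, and by $P^0_t$ the (deterministic) solution operator for \eqref{intr4}. For any bounded continuous $\phi:\ell^2 \to \R$ and any fixed $t\ge 0$, I need to establish
\[
\int_{\ell^2} \phi(u_0)\,d\mu^0(u_0) = \int_{\ell^2} \phi\bigl(u(t,0,u_0)\bigr)\,d\mu^0(u_0).
\]
Since each $\mu^{\eps_{n_k}}$ is invariant for \eqref{intr3},
\[
\int_{\ell^2} \phi\,d\mu^{\eps_{n_k}} = \int_{\ell^2} (P^{\eps_{n_k}}_t\phi)(u_0)\,d\mu^{\eps_{n_k}}(u_0),
\]
where $(P^{\eps_{n_k}}_t\phi)(u_0) = \E[\phi(u^{\eps_{n_k}}(t,0,u_0))]$. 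The left-hand side converges to $\int \phi\,d\mu^0$ by weak convergence. For the right-hand side, I would split
\[
\Bigl|\!\int P^{\eps_{n_k}}_t\phi\,d\mu^{\eps_{n_k}} - \!\int P^0_t\phi\,d\mu^0\Bigr|
\le \|P^{\eps_{n_k}}_t\phi - P^0_t\phi\|_\infty + \Bigl|\!\int P^0_t\phi\,d\mu^{\eps_{n_k}} - \!\int P^0_t\phi\,d\mu^0\Bigr|.
\]
The first term vanishes as $k\to\infty$ because Lemma \ref{csol} provides convergence of $u^\eps(t,0,u_0)$ to $u(t,0,u_0)$ in probability \emph{uniformly in} $u_0\in\ell^2$, and a bounded continuous $\phi$ then yields uniform convergence of $P^{\eps_{n_k}}_t\phi$ to $P^0_t\phi$ via dominated convergence. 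The second term vanishes by weak convergence of $\mu^{\eps_{n_k}}$ to $\mu^0$, provided $P^0_t\phi$ is bounded and continuous on $\ell^2$; boundedness is obvious, and continuity follows from the standard continuous dependence of solutions of the deterministic equation \eqref{intr4} on initial data (an easy consequence of the dissipativity condition \eqref{f1} and a Gronwall argument analogous to the proof of Lemma \ref{csol}).

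The main obstacle, and the reason the argument closes cleanly, is precisely the passage to the limit inside the integral against a moving measure: one cannot simply use weak convergence because the integrand $P^{\eps_{n_k}}_t\phi$ also depends on $k$. The crucial input is therefore the \emph{uniform} convergence in $u_0$ given by Lemma \ref{csol}, which is what allows us to absorb the $k$-dependence of the integrand before invoking weak convergence. Once the invariance identity is established for all bounded continuous $\phi$, $\mu^0$ belongs to $\calm^0$, completing the proof.
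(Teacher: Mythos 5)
Your proof is correct and follows the same route as the paper: tightness from Lemma \ref{intig} plus Prokhorov yields a weakly convergent subsequence, and the uniform-in-$u_0$ convergence in probability from Lemma \ref{csol} is then used to pass to the limit in the invariance identity $\int \phi\, d\mu^\eps = \int P^\eps_t \phi \, d\mu^\eps$. The paper simply cites external references for this limit passage rather than writing it out; your argument supplies those details, with the minor remark that restricting the test functions $\phi$ to bounded Lipschitz (or bounded uniformly continuous) functions makes the dominated-convergence step from Lemma \ref{csol} fully airtight without further qualification, and this class already determines weak convergence.
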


\begin{proof}
It follows from Lemma \ref{intig}
that
 there exists a subsequence $\eps_{n_k}$ and
 a  probability measure $\mu$
 such that $\mu^{\eps_{n_k}}
 \to  \mu $ weakly, which along with Lemma
 \ref{csol} implies that
 $\mu$ must be an invariant measure
 of the limiting system  \eqref{intr4}
 (see, e.g.,     \cite[Theorem 6.1]{lww1}
 and    \cite[Theorem 2.1]{lche1}).  
 \end{proof}

 Starting from the next section,
 we will  study 
 the LDP of solutions and  invariant measures of \eqref{intr3}
 as $\eps \to 0$.

\section{Uniform large deviations  of solutions} 
\setcounter{equation}{0}

     In this section, we prove the
     uniform   
      Freidlin-Wentzell  LDP of solutions
      as well as 
       the  uniform  
   Dembo-Zeitouni  LDP of solutions
    over all  initial data,
    which are needed for
    establishing the LDP of invariant measures.
    We first  consider the controlled
equation
with control $h\in L^2(0,T; \ell^2)$
 corresponding to \eqref{intr3},
which is given by
 \be\label{ctr1}
 {\frac {d u_h}{dt}}  (t)
+\lambda u_h(t)
   +A   u_h (t)   
  + f(  u_h(t))  
  =      g + \sigma (u_h (t)) h(t), 
   \quad  u_h (0)=u_0\in\ell^2.
\ee

    Notice that
    the limiting system
       \eqref{intr4} is a special case 
       of \eqref{ctr1}
       with
 $h\equiv 0$.

  Given $u_0 \in \ell^2$
and $h\in L^2(0,T; \ell^2)$,  
by a solution  $u_h$  of \eqref{ctr1},
we mean that  
$
  u\in  C([0,T], \ell^2)  
$  and  
 for all $t\in [0,T]$,
$$
  u_h  (t)
+\lambda  \int_0^t u_h(s) ds
   +\int_0^t A   u_h (s)   ds
  +
  \int_0^t  f(  u_h(s))   ds
  =   u_0 +    gt +
  \int_0^t 
   \sigma (u_h (s) h(s) ds  ,  
$$
 in $\ell^2$.
 If \eqref{f1}  and \eqref{sig1}-\eqref{sig2}
 are fulfilled, then one can verify that
 for every $u_0 \in \ell^2$
 and 
 $h\in L^2(0,T; \ell^2)$,
 system \eqref{ctr1} has a unique
 solution $u_h\in C([0,T], \ell^2)$.

   The next lemma is concerned with
   the uniform estimates of solutions
   of \eqref{ctr1}.

     \begin{lem}\label{cest1}
     If \eqref{f1}
     and \eqref{sig1}-\eqref{sig2}
     are fulfilled, then for every
     $u_0\in \ell^2$ 
     and  
     $h\in L^2 (0,T;  \ell^2)$, the solution
      of \eqref{ctr1} satisfies,
       for all $t\in [ 0,T]$,
 $$ 
       \| u_h(t) \|^2
     \le e^{-\lambda t} \|u_0 \|^2
     + 2\lambda^{-2}
     \|g \|^2
     + 2\lambda^{-1} L^2_\sigma
     \int_0^t  \| h(s) \|^2 ds.
     $$
   \end{lem}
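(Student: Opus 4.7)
The plan is a standard energy estimate. First I would take the inner product of \eqref{ctr1} with $u_h(t)$ in $\ell^2$ to obtain
\begin{equation*}
\frac{1}{2}\frac{d}{dt}\|u_h(t)\|^2 + \lambda \|u_h(t)\|^2 + (Au_h(t), u_h(t)) + (f(u_h(t)), u_h(t)) = (g, u_h(t)) + (\sigma(u_h(t))h(t), u_h(t)).
\end{equation*}
Since $A = \sum_{j=1}^N B_j^* B_j$ is nonnegative, $(Au_h, u_h) = \sum_{j=1}^N \|B_j u_h\|^2 \ge 0$, and condition \eqref{f1} gives $(f(u_h), u_h) = \sum_{i\in\Z} f_i(u_{h,i})u_{h,i} \ge 0$. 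Dropping these two nonnegative terms yields the differential inequality
\begin{equation*}
\frac{1}{2}\frac{d}{dt}\|u_h(t)\|^2 + \lambda \|u_h(t)\|^2 \le (g, u_h(t)) + (\sigma(u_h(t))h(t), u_h(t)).
\end{equation*}

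Next I would bound the right-hand side by Young's inequality, splitting the $\lambda\|u_h\|^2$ budget equally between the two forcing contributions. For the forcing term, $(g,u_h) \le \tfrac{\lambda}{4}\|u_h\|^2 + \lambda^{-1}\|g\|^2$. For the control term, using that $\sigma(u_h)$ is a Hilbert-Schmidt operator on $\ell^2$ with $\|\sigma(u_h)\|_{\call_2(\ell^2,\ell^2)} \le L_\sigma$ by \eqref{sig1}, we have
\begin{equation*}
(\sigma(u_h(t))h(t), u_h(t)) \le L_\sigma \|h(t)\|\,\|u_h(t)\| \le \tfrac{\lambda}{4}\|u_h(t)\|^2 + \lambda^{-1}L_\sigma^2\|h(t)\|^2.
\end{equation*}
Combining these estimates gives
\begin{equation*}
\frac{d}{dt}\|u_h(t)\|^2 + \lambda\|u_h(t)\|^2 \le 2\lambda^{-1}\|g\|^2 + 2\lambda^{-1}L_\sigma^2\|h(t)\|^2.
\end{equation*}

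Finally I would apply Gronwall's inequality in its integral form. Multiplying by $e^{\lambda t}$ and integrating over $[0,t]$,
\begin{equation*}
\|u_h(t)\|^2 \le e^{-\lambda t}\|u_0\|^2 + 2\lambda^{-1}\|g\|^2 \int_0^t e^{-\lambda(t-s)}ds + 2\lambda^{-1}L_\sigma^2 \int_0^t e^{-\lambda(t-s)}\|h(s)\|^2 ds.
\end{equation*}
Since $\int_0^t e^{-\lambda(t-s)}ds \le \lambda^{-1}$ and $e^{-\lambda(t-s)} \le 1$, this yields the desired bound. There is no serious obstacle here: the only points to check are the two sign conditions for $A$ and $f$, and the fact that the Hilbert-Schmidt norm dominates the operator norm so that $\|\sigma(u_h)h\| \le L_\sigma\|h\|$ follows from \eqref{sig1}.
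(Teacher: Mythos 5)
Your proof is correct and follows the same route as the paper's: take the $\ell^2$ inner product with $u_h$, discard the nonnegative dissipative and nonlinear terms using the sign conditions on $A$ and $f$, apply Young's inequality to absorb half of the $2\lambda\|u_h\|^2$ dissipation, and close with Gronwall. The only cosmetic difference is that the paper applies Young's inequality directly to the quantities $2\|u_h\|\|g\|$ and $2L_\sigma\|u_h\|\|h\|$ rather than to the inner products, but the constants and the final estimate agree exactly.
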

     
      \begin{proof}
      By \eqref{f1}  and \eqref{sig1}, we get
      from \eqref{ctr1} that 
      $$
        {\frac d{dt}}
       \| u_h(t) \|^2
       +
       2\lambda \| u_h (t) \|^2
          \le 2  \| u_h (t) \| \| g\|
          + 2\| u_h (t) \| \|\sigma
          (u_h (t))\|_{\call_2(\ell^2,\ell^2)}
          \| h(t) \|
     $$
     $$
     \le
     \lambda  \| u_h (t) \|^2
     +  2\lambda^{-1} \| g \|^2
     + 2\lambda^{-1}   L^2_\sigma \|
     h(t) \|^2
     $$
     which implies that
     for all $t\in [0,T]$,
      $$
       \| u_h(t) \|^2
     \le e^{-\lambda t} \|u_0 \|^2
     + 2\lambda^{-2}
     \|g \|^2
     + 2\lambda^{-1} L^2_\sigma
     \int_0^t
     e^{\lambda (s-t)} \| h(s) \|^2 ds.
     $$
     This completes the proof.
     \end{proof}

               We now  derive the uniform
               estimates on the tails of solutions
               of \eqref{ctr1}. 
   
   \begin{lem}\label{cest2}
    If \eqref{f1}
     and \eqref{sig1}-\eqref{sig3}
     are fulfilled, then  
     for every
   $R>0$,
 $\delta>0$ and    compact subset $\calk$
    of $\ell^2$,
   there exists $k_0=k_0(R, \delta, \calk)
   \in \N$ such that
   for all $k\ge k_0$, 
   $u_0\in \calk$ and  
    $\| h\|_{L^2(0,T; \ell^2)}
   \le R$ with $T>0$,
    the 
   solution $u_h$ of \eqref{ctr1} 
     satisfies 
   $$
  \sup_{0\le t\le T} 
  \sum_{|i|\ge k}
   |\left ( u_h(t,0, u_0)
   \right )_i  |^2 <\delta.
   $$
   \end{lem}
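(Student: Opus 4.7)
The plan is to mirror the argument of Lemma \ref{tail} in a pathwise manner, using the smooth cutoff $\rho_k u = (\rho(|i|/k) u_i)_{i\in\Z}$ from \eqref{cutoff} to localize the tails, and to absorb the additional controlled diffusion term $\sigma(u_h)h$ by combining the tail condition \eqref{sig3} with the a priori bound $\|h\|_{L^2(0,T;\ell^2)}\le R$.

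First I would take the inner product of \eqref{ctr1} with $\rho_k^2 u_h(t)$ to derive an identity for $\tfrac{d}{dt}\|\rho_k u_h(t)\|^2$. The drift term has a favorable sign because \eqref{f1} gives $\sum_{i\in\Z}\rho^2(|i|/k)\,f_i((u_h)_i)(u_h)_i\ge 0$. The discrete Laplacian contribution is handled by the same $B_j$-commutator estimate as in \eqref{tail p2}, yielding
\[
-2\sum_{j=1}^{N}\bigl(B_j u_h(t),\,B_j(\rho_k^2 u_h(t))\bigr)\le \tfrac{c_1}{k}\,\|u_h(t)\|^2,
\]
while Lemma \ref{cest1} gives the uniform bound $\|u_h(t)\|^2\le M(R,\calk,g,T)$ for every $t\in[0,T]$, $u_0\in\calk$ and $\|h\|_{L^2(0,T;\ell^2)}\le R$, so this contribution is at most $c_1 M/k$. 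For the source term, Young's inequality gives
\[
2(\rho_k g,\rho_k u_h(t))\le \tfrac{\lambda}{2}\|\rho_k u_h(t)\|^2 + 2\lambda^{-1}\!\sum_{|i|\ge k} g_i^{\,2},
\]
with the last piece small for large $k$ since $g\in\ell^2$.

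The heart of the argument is the control term. Since $\rho_k\sigma(u_h)$ is supported on coordinates $|i|\ge k$, its Hilbert--Schmidt norm is bounded by
\[
\eta_k:=\sup_{u\in\ell^2}\|1_{\{|\cdot|\ge k\}}\sigma(u)\|_{\call_2(\ell^2,\ell^2)},
\]
and $\eta_k\to 0$ as $k\to\infty$ by \eqref{sig3}. Young's inequality then yields
\[
2(\rho_k\sigma(u_h(t))h(t),\rho_k u_h(t))\le \tfrac{\lambda}{2}\|\rho_k u_h(t)\|^2 + 2\lambda^{-1}\eta_k^{\,2}\|h(t)\|^2,
\]
whose time integral is at most $2\lambda^{-1}\eta_k^{\,2}R^2$. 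Combining the four estimates produces a differential inequality of the form $\tfrac{d}{dt}\|\rho_k u_h(t)\|^2 + \lambda\|\rho_k u_h(t)\|^2 \le \beta_k(t)$, where $\int_0^T\beta_k(s)\,ds\to 0$ as $k\to\infty$ uniformly in $u_0\in\calk$ and $\|h\|_{L^2(0,T;\ell^2)}\le R$. Gronwall's inequality together with the equi-smallness of tails on the compact set $\calk$ (which gives $\sup_{u_0\in\calk}\|\rho_k u_0\|^2\to 0$ as $k\to\infty$) then yields the desired conclusion, after replacing $\rho_k$ by the sharp cutoff $1_{\{|i|\ge 2k\}}$ exactly as at the end of the proof of Lemma \ref{tail}.

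The main technical obstacle is the controlled diffusion term: the pointwise $\ell^2$-norm $\|h(t)\|$ may be arbitrarily large for fixed $t$ even though $\|h\|_{L^2(0,T;\ell^2)}\le R$, so one has to arrange that the small factor $\eta_k^{\,2}$ multiplies $\|h(t)\|^2$ \emph{before} integrating in time. The Young split above achieves precisely this, and condition \eqref{sig3} is exactly what makes $\eta_k$ tend to zero uniformly in the state $u$. Everything else is a pathwise variant of the calculation already carried out in Lemma \ref{tail}, with expectations dropped.
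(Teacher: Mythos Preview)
Your proposal is correct and follows essentially the same route as the paper: the same cutoff $\rho_k$, the same handling of the Laplacian via the $B_j$-commutator bound and Lemma \ref{cest1}, the same Young splits for the source and control terms (with the tail quantity $\eta_k$ coming from \eqref{sig3}), and the same Gronwall-plus-compactness finish. One small caution: when you invoke Gronwall, retain the exponential weight $e^{-\lambda(t-s)}$ rather than bounding by the plain integral $\int_0^T\beta_k(s)\,ds$, since the constant-in-$t$ pieces of $\beta_k$ would otherwise acquire a factor of $T$, whereas the lemma requires $k_0$ independent of $T$; note also that the bound $M$ from Lemma \ref{cest1} is already $T$-independent.
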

   
    \begin{proof}
    Let $\rho$ be the smooth truncation function
    as given by \eqref{cutoff}.
    By \eqref{f1}  and \eqref{ctr1}  we  get 
 \be\label{cest2 p1}
 {\frac {d}{dt}}
  \|\rho_k u_h  (t) \|^2)
 + 2 
 \sum_{j=1}^N    ( B_j
  u_h  (t),  \  B_j (\rho_k^2 u_h (t) ) )
   +2 \lambda   \| \rho_k u_h  (t) \|^2  
 $$
 $$
 \le  2   (\rho_k g, \rho_k  u_h (t) ) 
 +   2\left (\rho_k  u_h (t), \ \rho_k \sigma (   
 u_h(t)) h(t)  \right ) .
 \ee
 As in \eqref{tail p2a}, by Lemma \ref{cest1} we have
 for all $u_0 \in \calk$ and $\|h\|_{L^2(0,T; \ell^2)}
 \le R$, 
   \be\label{cest2 p2}
-2 
 \sum_{j=1}^N  ( B_j
  u_h  (t),  \  B_j (\rho_k^2 u_h  (t) ) )
  \le {\frac {c_1}k} 
     \|u_h (t) \|^2 
    \le {\frac {c_2}k},
    \ee
     where $c_2=c_2(R,
     \calk)>0$ is
     a constant  independent  of $k$.
     For the last term in \eqref{cest2 p1},
     we have 
$$
     2\left (\rho_k  u_h (t), \ \rho_k \sigma (   
 u_h(t)) h(t)  \right )
 \le
 {\frac 12} \lambda
 \| \rho_k  u_h (t)\|^2
 +2\lambda^{-1} \| \rho_k \sigma (   
 u_h(t) )\|^2_{\call_2(\ell^2, \ell^2)}
 \| h(t)\|^2
 $$
      \be\label{cest2 p3}
 \le
 {\frac 12} \lambda
 \| \rho_k  u_h (t)\|^2
 +2\lambda^{-1}
 \sup_{u\in \ell^2}
 \|1_{\{|\cdot|\ge k\}}
 \sigma (u)\|^2_{\call_2(\ell^2,\ell^2)}
 \| h(t)\|^2.
\ee
 By Young's inequality we have
   \be\label{cest2 p4}
  2   (\rho_k g, \rho_k  u_h  (t)) 
 \le {\frac 12} \lambda 
 \ \| \rho_k u_h  (t)\|^2 
 +  2\lambda^{-1}  
  \sum_{|i|\ge k} 
  g_i^2 .
\ee  

It follows from  \eqref{cest2 p1}-\eqref{cest2 p4}
that  
$$
 {\frac {d}{dt}}
  \|\rho_k  u_h  (t) \|^2 
 +   \lambda   \| \rho_k  u_h (t) \|^2  
 $$
   \be\label{cest2 p5}  
  \le   {\frac {c_2}k}
  +
   2\lambda^{-1}
 \sup_{u\in \ell^2}
 \|1_{\{|\cdot|\ge k\}}
 \sigma (u)\|^2_{\call_2(\ell^2,\ell^2)}
 \| h(t)\|^2
 +  2\lambda^{-1}  
  \sum_{|i|\ge k} 
  g_i^2.
 \ee
 By \eqref{cest2 p5}  and Gronwall\rq{}s inequality, we 
 obtain that for all $t\in [0,T]$,
$$
   \|\rho_k u_h  (t) \|^2
  \le e^{- \lambda t} \|\rho_k u_0  \|^2 
   +   c_2 \lambda^{-1} k^{-1} 
   +  2\lambda^{-2}  
  \sum_{|i|\ge k} 
  g_i^2
  $$
  $$
  +
  2\lambda^{-1} \sup_{u\in \ell^2}
 \|1_{\{|\cdot|\ge k\}}
 \sigma (u)\|^2_{\call_2(\ell^2,\ell^2)}
  \int_0^t
  e^{\lambda (s-t)}
 \| h(s )\|^2 ds
 $$
 \be\label{cest2 p6}
 \le 
 \sum_{|i|\ge k}
 |u_{0,i}|^2
 +    c_2 \lambda^{-1} k^{-1} 
   +  2\lambda^{-2}  
  \sum_{|i|\ge k} 
  g_i^2
   +
  2R^2 \lambda^{-1} \sup_{u\in \ell^2}
 \|1_{\{|\cdot|\ge k\}}
 \sigma (u)\|^2_{\call_2(\ell^2,\ell^2)}.
 \ee
   
  Note that
  $u_0 \in \calk$  and $\calk$ is   compact,
  which along with \eqref{sig3}
  and \eqref{cest2 p6}
   implies that
  for every $R>0$ and 
  $\delta>0$,   
   there
  exists $k_0=k_0(R, \delta, \calk) \in \N$
  such that   for all $k \ge k_0$ and $t\in [0,T] $,
  $$  
  \sum_{|i| \ge  2k} |(u_h (t))_i  |^2 
  \le 
   \|\rho_k u_h  (t) \|^2 <\delta.
  $$
  This completes the proof.
    \end{proof}

 The following uniform tail-ends estimates
 are also useful when studying
 the long term dynamics of 
 the deterministic system \eqref{intr4}.

 \begin{lem}\label{cest2a}
 Suppose \eqref{f1} and
   \eqref{sig1}-\eqref{sig3} are  valid.  
  Then for every
  $\delta>0$  and $R>0$,   there exist
  $T=T(\delta, R)>0$ and  
  $k_0=k_0(\delta) \ge 1$ such that for all 
  $t\ge T$ and  $k\ge k_0$,
   the 
   solution $u(\cdot, 0, u_0)$  of \eqref{intr4} 
   with $\|u_0\| \le R$  
  satisfies  
  $$ 
  \sum_{|i|\ge k}
   | u_i (t,0, u_0)
     |^2 <\delta.
   $$
   \end{lem}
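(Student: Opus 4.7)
The plan is to view \eqref{intr4} as the controlled equation \eqref{ctr1} with $h\equiv 0$ and to run the cut-off argument of Lemma \ref{cest2} after first letting the dissipation push the solution into an $R$-independent absorbing ball. Concretely, taking the $\ell^2$-inner product of \eqref{intr4} with $u(t)$, using $(Au,u)\ge 0$ and the sign condition $(f(u),u)\ge 0$ from \eqref{f1}, and applying Young's inequality to the $g$-term, I would obtain
$$
\|u(t)\|^2 \le e^{-\lambda t}\|u_0\|^2 + \lambda^{-2}\|g\|^2.
$$
Hence for any $R>0$ and $\|u_0\|\le R$, there exists $T_1=T_1(\delta,R)>0$ (with $e^{-\lambda T_1}R^2\le 1$) such that $\|u(t)\|^2 \le M:=1+\lambda^{-2}\|g\|^2$ for every $t\ge T_1$, and crucially $M$ depends only on $\lambda$ and $g$.

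Next, I would use the smooth cut-off $\rho$ from \eqref{cutoff} and repeat the computation \eqref{cest2 p1}--\eqref{cest2 p4} with $h\equiv 0$. The terms involving $\sigma$ and $\|h\|$ disappear, while \eqref{tail p2} and the bound $\|u(t)\|^2\le M$ (valid for $t\ge T_1$) give the commutator estimate $-2\sum_j(B_ju, B_j(\rho_k^2u))\le c_1 M/k$ with a universal constant $c_1$. Combined with Young's inequality on $(\rho_k g, \rho_k u)$, this yields, for $t\ge T_1$,
$$
\frac{d}{dt}\|\rho_k u(t)\|^2 + \lambda\|\rho_k u(t)\|^2 \le \frac{c_1 M}{k} + \frac{1}{\lambda}\sum_{|i|\ge k}g_i^2 =: \eta(k).
$$
Since $M$, $\lambda$, and $g$ are independent of $R$ and $g\in\ell^2$, I can choose $k_0=k_0(\delta)$ so that $\eta(k)\le\lambda\delta/4$ for every $k\ge k_0$. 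Applying Gronwall's inequality on $[T_1,t]$ and using $\|\rho_k u(T_1)\|^2\le \|u(T_1)\|^2\le M$ gives
$$
\|\rho_k u(t)\|^2 \le e^{-\lambda(t-T_1)}M + \frac{\delta}{4}.
$$
Choosing $T_2=T_2(\delta)$ with $e^{-\lambda T_2}M\le\delta/4$ and setting $T:=T_1+T_2$ (which depends on $\delta$ and $R$), one has $\|\rho_k u(t)\|^2\le\delta/2$ for all $t\ge T$. Because $\rho(s)=1$ for $|s|\ge 2$, this yields $\sum_{|i|\ge 2k}|u_i(t,0,u_0)|^2\le \|\rho_k u(t)\|^2 < \delta$, and relabelling $2k_0$ as $k_0$ finishes the argument.

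The only delicate point in this plan is the decoupling of the $R$-dependence in the final conclusion: the statement requires $k_0$ to depend on $\delta$ alone, so one cannot simply bound $\|u(t)\|^2$ by the initial-data-dependent quantity $R^2 + \lambda^{-2}\|g\|^2$ in the commutator term. This is precisely why the dissipative absorbing-ball step must be performed before activating the cut-off inequality, so that $\eta(k)$ depends only on $\lambda$, $g$ (through $M$). Apart from this bookkeeping, everything else is a routine specialization of the proof of Lemma \ref{cest2}, and no genuine new obstacle is expected.
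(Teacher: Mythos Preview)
Your proposal is correct and follows essentially the same route as the paper: both view \eqref{intr4} as \eqref{ctr1} with $h\equiv 0$, first wait a time $T_1=T_1(R)$ until the dissipation brings the solution into an $R$-independent absorbing ball, then run the cut-off estimate so that the right-hand side depends only on $\lambda$ and $g$, allowing $k_0$ to be chosen from $\delta$ alone, and finally close with Gronwall. Your explicit identification of why the absorbing-ball step must precede the cut-off (to decouple $k_0$ from $R$) is exactly the point driving the paper's argument as well.
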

 
 \begin{proof}
 Note that the solution
 $u(\cdot,0, u_0)$
 of \eqref{intr4}
 is also the   solution of \eqref{ctr1}
 with $h \equiv 0$.
 By  \eqref{cest2 p2}
 and   Lemma \ref{cest1} we find
 that
 for   all  
 $\|u_0\| 
 \le R$ and $t\ge 0$, 
  $$
-2 
 \sum_{j=1}^N    ( B_j
  u (t),  \  B_j (\rho_k^2 u (t) ) )
  \le {\frac {c_1}k}  
  \left (
  e^{-\lambda t}
  R
  + \lambda^{-2} \|g\|^2 
  \right ),
  $$
  and hence there exists $T_1=T_1(R)>0$
  such that for all $t\ge T_1$,
  \be\label{cest2a p1}
-2 
 \sum_{j=1}^N    ( B_j
  u  (t),  \  B_j (\rho_k^2 u  (t) ) )
  \le {\frac {c_1}k}  
  \left (
  1
  + \lambda^{-2} \|g\|^2 
  \right ).
    \ee
    Similar to \eqref{cest2 p5}, 
    by \eqref{cest2a p1}
    we find that 
 for every $\delta>0$
 and $R>0$,
there exist
$T_2=T_2(\delta, R) \ge T_1$
and $k_0=k_0 (\delta)\ge 1$
such that for all $t\ge T_2$ and
$k\ge k_0$,
 $$
 {\frac {d}{dt}}
  \|\rho_k  u  (t) \|^2 
 +   \lambda     \| \rho_k  u (t) \|^2  
 < \delta,
 $$
 which implies the desired estimates. 
  \end{proof}

   We now prove the continuity
   of solutions  of \eqref{intr3}
   in initial data.

\begin{lem}\label{cest3}
  If \eqref{f1}
     and \eqref{sig1}-\eqref{sig2}
     are fulfilled, then  
     the solutions $u_{h_i} (\cdot, 0, u_{0,i}) $
of
\eqref{ctr1}  with  
  $i=1,2$,   
satisfy   for all    $t\in [0,T]$, 
\be\label{cest3 1}
\| u_{h_1} (t,  0, u_{0,1} )-u_{h_2} (t,
0,
 u_{0,2}) 
\|^2 
\le
L_1  
\left ( \| u_{0,1}- u_{0,2}\|^2
  + 
 \| h_1  -h_2   \|^2_{L^2(0,T; \ell^2)}
 \right ),  
  \ee
  where $L_1=L_1(T, R)>0$ when
  $\|h_1\|_{L^2(0,T; \ell^2)} \le R$.
  
  In addition, if $\lambda>\gamma$, then we have
    for all $t\in [0,T]$,
    $$
\|  u_{h_1} (t,  0, u_{0,1} )-u_{h_2} (t,
0,
 u_{0,2}) \|^2
 \le
  e^{-\int_0^t (
  \lambda 
-  \gamma 
 -2(\lambda -\gamma)^{-1} L^2_\sigma
 \|h_1 (r)\|^2
  ) dr  } \|  u_{0,1}- u_{0,2} \|^2
  $$
 \be\label{cest3 2}
 +
 2(\lambda -\gamma)^{-1}  L^2_\sigma
 \int_0^t
 e^{\int_t^s (
 \lambda 
-  \gamma 
 -2(\lambda -\gamma)^{-1} L^2_\sigma
 \|h_1 (r)\|^2
 ) dr  }
 \| h_1(s) -h_2 (s) \|^2 ds.
\ee
 \end{lem}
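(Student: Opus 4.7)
The plan is to set $w(t) = u_{h_1}(t,0,u_{0,1}) - u_{h_2}(t,0,u_{0,2})$, subtract the two copies of \eqref{ctr1}, and derive a Gronwall-type differential inequality for $\|w(t)\|^2$. Taking the inner product of the subtracted equation with $w(t)$ produces the four contributions
\begin{equation*}
\tfrac{1}{2}\tfrac{d}{dt}\|w\|^{2} + \lambda\|w\|^{2} + (Aw,w) + (f(u_{h_1})-f(u_{h_2}),w) = (\sigma(u_{h_1})h_{1}-\sigma(u_{h_2})h_{2},w).
\end{equation*}
The term $(Aw,w)=\sum_{j=1}^{N}\|B_{j}w\|^{2}\ge 0$ is dropped. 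The pointwise condition $f_{i}'\ge -\gamma$ from \eqref{f1} gives the one-sided Lipschitz bound $(f(u_{h_1})-f(u_{h_2}),w)\ge -\gamma\|w\|^{2}$, so the left-hand side is controlled below by $\tfrac{1}{2}\tfrac{d}{dt}\|w\|^{2}+(\lambda-\gamma)\|w\|^{2}$.

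For the stochastic coupling I split
\begin{equation*}
\sigma(u_{h_1})h_{1}-\sigma(u_{h_2})h_{2} = (\sigma(u_{h_1})-\sigma(u_{h_2}))h_{1} + \sigma(u_{h_2})(h_{1}-h_{2}),
\end{equation*}
and use the Lipschitz bound \eqref{sig2} and uniform bound \eqref{sig1} to obtain
\begin{equation*}
|(\sigma(u_{h_1})h_{1}-\sigma(u_{h_2})h_{2},w)| \le L_{\sigma}\|h_{1}\|\,\|w\|^{2} + L_{\sigma}\|h_{1}-h_{2}\|\,\|w\|.
\end{equation*}
This yields the master inequality
\begin{equation*}
\tfrac{d}{dt}\|w\|^{2} + 2(\lambda-\gamma)\|w\|^{2} \le 2L_{\sigma}\|h_{1}\|\,\|w\|^{2} + 2L_{\sigma}\|h_{1}-h_{2}\|\,\|w\|,
\end{equation*}
from which both estimates in the lemma follow by two different applications of Young's inequality.

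For \eqref{cest3 1} (no sign assumption on $\lambda-\gamma$), apply $2L_{\sigma}\|h_{1}-h_{2}\|\,\|w\|\le \|w\|^{2}+L_{\sigma}^{2}\|h_{1}-h_{2}\|^{2}$ to get $\tfrac{d}{dt}\|w\|^{2}\le (2\gamma-2\lambda+1+2L_{\sigma}\|h_{1}\|)\|w\|^{2}+L_{\sigma}^{2}\|h_{1}-h_{2}\|^{2}$. Gronwall's inequality and the Cauchy--Schwarz bound $\int_{0}^{T}\|h_{1}(s)\|\,ds\le \sqrt{T}\|h_{1}\|_{L^{2}(0,T;\ell^{2})}\le R\sqrt{T}$ then yield \eqref{cest3 1} with $L_{1}=L_{1}(T,R)$ containing the exponential factor $e^{(2\gamma-2\lambda+1)T+2L_{\sigma}R\sqrt{T}}$.

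For \eqref{cest3 2} under $\lambda>\gamma$, apply Young with parameter $\alpha=(\lambda-\gamma)/2$ to both right-hand terms of the master inequality: $2L_{\sigma}\|h_{1}\|\|w\|^{2}=2(L_{\sigma}\|h_{1}\|\|w\|)\|w\|\le 2(\lambda-\gamma)^{-1}L_{\sigma}^{2}\|h_{1}\|^{2}\|w\|^{2}+\tfrac{\lambda-\gamma}{2}\|w\|^{2}$, and similarly $2L_{\sigma}\|h_{1}-h_{2}\|\|w\|\le 2(\lambda-\gamma)^{-1}L_{\sigma}^{2}\|h_{1}-h_{2}\|^{2}+\tfrac{\lambda-\gamma}{2}\|w\|^{2}$. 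Substituting and collecting the $\|w\|^{2}$ terms produces
\begin{equation*}
\tfrac{d}{dt}\|w\|^{2} + \bigl((\lambda-\gamma)-2(\lambda-\gamma)^{-1}L_{\sigma}^{2}\|h_{1}\|^{2}\bigr)\|w\|^{2} \le 2(\lambda-\gamma)^{-1}L_{\sigma}^{2}\|h_{1}-h_{2}\|^{2},
\end{equation*}
and one final application of Gronwall's inequality gives exactly \eqref{cest3 2}. There is no genuine obstacle; the only care is bookkeeping the Young parameters so that the constants line up with the two target forms.
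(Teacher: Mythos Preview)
Your proposal is correct and follows essentially the same approach as the paper's proof: subtract the two controlled equations, use $(Aw,w)\ge 0$ and the one-sided Lipschitz bound from \eqref{f1}, split the $\sigma$-term via \eqref{sig1}--\eqref{sig2}, and close with Young plus Gronwall. The only cosmetic difference is that for \eqref{cest3 1} the paper takes the Young parameter to be $\lambda$ (yielding a coefficient $2(\gamma+L_\sigma\|h_1\|)$ in the Gronwall factor) rather than $1$, which changes the explicit form of $L_1(T,R)$ but not the argument.
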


\begin{proof}
Let $v(t) = u_{h_1} (t,0, u_{0,1})
 -u_{h_2}(t,0, u_{0,2})$. 
By \eqref{ctr1} we have
$$ 
{\frac {d}{dt}}
\| v(t) \|^2
+  2 \lambda \| v(t) \|^2
+ 2\sum_{j=1}^N
\| B_j v(t)\|^2
+ 2  \left (f(u_{h_1}(t, u_{0,1}))
-f(u_{h_2} (t,  u_{0,2})),
v(t) \right )
$$
\be\label{cest3 p1}
  =2
  \left (
  \sigma (u_{h_1} (t, u_{0,1}) ) h_1 (t)
  -
    \sigma (u_{h_2} (t,  u_{0,2}) ) h_2(t),
    \ v(t)
  \right ).
\ee
By \eqref{f1} we get
\be\label{cest3 p2}
2  \left (f(u_{h_1}(t, u_{0,1}))
-f(u_{h_2} (t,  u_{0,2})),
v(t) \right )
\ge -2 \gamma \| v(t)\|^2.
\ee
For the last term on the right-hand side of
\eqref{cest3 p1},
by \eqref{sig1}-\eqref{sig2}  we obtain
$$
2
  \left (
  \sigma (u_{h_1} (t, u_{0,1}) ) h_1 (t)
  -
    \sigma (u_{h_2} (t,  u_{0,2}) ) h_2(t),
    \ v(t)
  \right )
  $$
  $$
=2
  \left ( (
  \sigma (u_{h_1} (t, u_{0,1}) ) 
  -  \sigma (u_{h_2} (t, u_{0,2}) ))
  h_1 (t) ,
    \ v(t)
  \right )
  $$
  $$
  + 2
  \left (   
     \sigma (u_{h_2} (t, u_{0,2})  )
  (h_1 (t) -h_2 (t)) ,
    \ v(t)
  \right )
  $$
 \be\label{cest3 p3}
\le 2L_\sigma \|h_1(t) \| \| v(t) \|^2
+ 2L_\sigma \|h_1(t)-h_2(t)\| \| v(t) \|.
 \ee
 By Young's inequality we have
   $$
   2L_\sigma \|h_1(t)-h_2(t)\| \| v(t) \|
   \le \lambda \| v(t) \|^2
   + \lambda^{-1} L^2_\sigma \| h_1 (t) -h_2 (t) \|^2,
   $$
   which together with \eqref{cest3 p3}
   shows that
   $$
2
  \left (
  \sigma (u_{h_1} (t, u_{0,1}) ) h_1 (t)
  -
    \sigma (u_{h_2} (t,  u_{0,2}) ) h_2(t),
    \ v(t)
  \right )
  $$
   \be\label{cest3 p4}
  \le
    (\lambda +
    2L_\sigma \|h_1(t) \|)  \| v(t) \|^2 
    + \lambda^{-1} L^2_\sigma \| h_1 (t) -h_2 (t) \|^2.
    \ee
    
    It follows from \eqref{cest3 p1}-\eqref{cest3 p2}
    and \eqref{cest3 p4} that
    $$ 
{\frac {d}{dt}}
\| v(t) \|^2
-2  \left ( 
  \gamma 
 + L_\sigma
 \|h_1 (t)\|
 \right ) \| v(t) \|^2
 \le 
  \lambda^{-1} L^2_\sigma \| h_1 (t) -h_2 (t) \|^2,
   $$
   and hence for all $t\in [0,T]$,
    $$
\| v(t) \|^2
 \le
  e^{2 \int_0^t ( 
 \gamma 
 + L_\sigma
 \|h_1 (r)\|) dr  } \| v(0)\|^2
  $$
 $$
 +
 \lambda^{-1} L^2_\sigma
 \int_0^t
 e^{2\int^t_s ( 
 \gamma 
 + L_\sigma
 \|h_1 (r)\|) dr  }
 \| h_1(s) -h_2 (s) \|^2 ds,
  $$
which  implies \eqref{cest3 1}.

If $\lambda>\gamma$, then we have 
   $$
   2L_\sigma \|h_1(t)-h_2(t)\| \| v(t) \|
   \le {\frac 12} (\lambda -\gamma)
    \| v(t) \|^2
   + 2(\lambda -\gamma)^{-1}
    L^2_\sigma \| h_1 (t) -h_2 (t) \|^2,
   $$
   which together with \eqref{cest3 p3}
   shows that
   $$
2
  \left (
  \sigma (u_{h_1} (t, u_{0,1}) ) h_1 (t)
  -
    \sigma (u_{h_2} (t,  u_{0,2}) ) h_2(t),
    \ v(t)
  \right )
  $$
   $$
  \le
    ( {\frac 12} (\lambda -\gamma) +
    2L_\sigma \|h_1(t) \|)  \| v(t) \|^2 
    + 2   (\lambda -\gamma)^{-1}
     L^2_\sigma \| h_1 (t) -h_2 (t) \|^2
   $$
       \be\label{cest3 p4a}
  \le
    (   (\lambda -\gamma) +
    2(\lambda -\gamma)^{-1}
    L^2_\sigma \|h_1(t) \|^2 )  \| v(t) \|^2 
    + 2   (\lambda -\gamma)^{-1}
     L^2_\sigma \| h_1 (t) -h_2 (t) \|^2.
    \ee
    It follows from \eqref{cest3 p1}-\eqref{cest3 p2}
    and \eqref{cest3 p4a} that
    $$ 
{\frac {d}{dt}}
\| v(t) \|^2
+  \left (  \lambda 
-  \gamma 
 -2(\lambda -\gamma)^{-1} L^2_\sigma
 \|h_1 (t)\|^2
 \right ) \| v(t) \|^2
 \le 
   2(\lambda -\gamma)^{-1}  L^2_\sigma \| h_1 (t) -h_2 (t) \|^2,
   $$
   and hence for all $t\in [0,T]$,
    $$
\| v(t) \|^2
 \le
  e^{-\int_0^t (
  \lambda 
-  \gamma 
 -2(\lambda -\gamma)^{-1} L^2_\sigma
 \|h_1 (r)\|^2
  ) dr  } \| v(0)\|^2
  $$
 $$
 +
 2(\lambda -\gamma)^{-1}  L^2_\sigma
 \int_0^t
 e^{\int_t^s (
 \lambda 
-  \gamma 
 -2(\lambda -\gamma)^{-1} L^2_\sigma
 \|h_1 (r)\|^2
 ) dr  }
 \| h_1(s) -h_2 (s) \|^2 ds.
  $$
which  implies \eqref{cest3 2} and thus
completes the proof.
  \end{proof}

In terms of \eqref{ctr1}, 
for every
$T>0$ and
$u_0\in \ell^2$, we define a rate function
$I_{T, u_0}: 
  C([0,T], \ell^2) \to [0, +\infty]$  by: 
\be\label{act1}
I_{T, u_0}
(u)  
= \inf \left \{ 
 {\frac 12} \int_{0}^{T}
 \| h (t) \|^2 dt:\
 h\in L^2(0,T; \ell^2) , 
 \  u(0) =u_0,\ \ u =u_h
 \right \},  
\ee
where $u_h$ is the solution of
\eqref{ctr1}.  Note that the infimum
of the empty set is 
  $+\infty$.
  
  By the argument of \cite{wan2024},
  one can verify
  that if $h_n \to h$ weakly in $L^2(0,T; \ell^2)$,
  then $u_{h_n} (\cdot, 0, u_0)
  \to u_h(\cdot, 0, u_0)$ strongly
  in $C([0,T], \ell^2)$ for every
  $u_0 \in \ell^2$, which implies that
for every $T>0$, $N>0$
and $u_0 \in \ell^2$,
the set
\be\label{lset0}
\left \{
u_h (\cdot, 0, u_0):
\|h\|_{   L^2(0,T; \ell^2)}
\le N
\right \}
\ \text{ is a compact set in } \ 
  C([0,T], \ell^2).
\ee 
  For each $ s \ge 0$, the $s$-level set
of $I_{T,u_0} $ is given  by
\be\label{lset1}
I_{T,u_0}^s
=\left \{
u\in C([0,T], \ell^2):
I_{T, u_0} (u) \le s
\right \}.
\ee
 It follows from \eqref{lset0} that
 $I_{T,u_0}^s$ is compact for all
$s\ge 0$, and hence
$I_{T,u_0} $ is a good rate function.

Next,  we will prove the solutions
of the stochastic system
\eqref{intr3} satisfies 
the  
Freidlin-Wentzell
uniform LDP  with  respect to  all
initial data in $\ell^2$, for which
we need to consider the following stochastic system
with $h\in  L^2(0,
T; \ell^2)$:
\be\label{gep}
 d   u_{h}^\eps  
 + \left ( \lambda u_{h}^\eps 
 +
  Au^\eps_{h}     
 +f(u^\eps_{h}  )  
\right )  dt  =
\left (
g 
+\sigma(u^\eps_{h}  )  h
\right )  dt
 + \sqrt{\eps} \sigma( u^\eps_{h}  )  
   dW   ,
\ee
 with  initial condition 
 $
 u^\eps_{h} (0)=u_0\in\ell^2. $

 Note that system \eqref{gep} is
 obtained from \eqref{intr3}
 by replacing
 $W$ by 
 $W +\eps^{-\frac 12}\int_0^\cdot
h(t) dt$, which is also an
$\ell^2$-cylindrical Wiener process by
Girsanov's theorem.
Consequently, the existence and uniqueness
of solutions
of \eqref{gep} follows from
that of system \eqref{intr3}.

%
% Moreover,
%     for every $T>0$, $R>0$  and $N>0$, 
%     one can  verify that 
%    there  exists
%       $C =C  (T, R, N)>0$ such that  
%       for all  $u_{0} 
%       \in \ell^2$ with 
%    $\| u_{0} \|\le R  $   and  
%    $h \in \cala_N$,
%    the solution $u_{h}^\eps$  of \eqref{gep}  
%    satisfies for all $\eps\in [0,1]$,
%  \be\label{gep 2}
% \E\left (
% \| u^\eps_h \|^2_{C([0,T], \ell^2)}
% \right )
% \le C .
% \ee

If $\eps =0$, then 
system \eqref{gep} reduces to the
control system \eqref{ctr1}.
The relation between
the solutions of  \eqref{ctr1}
and \eqref{gep}  is given below.

 \begin{lem}\label{cgep}
  If 
 \eqref{f1} and  \eqref{sig1}-\eqref{sig2}
 hold,  then 
 for every $N>0$ and $T>0$, 
 the solutions   of \eqref{ctr1}
 and \eqref{gep} satisfy
 $$
 \lim_{\eps \to 0}
 \sup_{u_0\in \ell^2}
 \sup_{h\in \cala_N}\E \left (  
 \|  u_{h}^\eps (\cdot, 0, u_0)
  -  u_{h}(\cdot,0, u_0) \|^2 _{C([0,T], \ell^2)}
  \right ) =0,
  $$
  where $\cala_N$ 
  is the set of all $\ell^2$-valued processes
  $h$ which are progressively measurable
  and $  \| h \|_{L^2(0,T; \ell^2)} \le N$,
  $P$-almost surely.
  \end{lem}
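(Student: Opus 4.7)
The plan is to study the difference $v^\eps = u^\eps_h - u_h$, which starts from zero and, by subtracting \eqref{ctr1} from \eqref{gep}, satisfies (in mild/integrated form)
\[
dv^\eps + \bigl(\lambda v^\eps + Av^\eps + f(u^\eps_h)-f(u_h)\bigr)\,dt = \bigl(\sigma(u^\eps_h)-\sigma(u_h)\bigr)h\,dt + \sqrt{\eps}\,\sigma(u^\eps_h)\,dW.
\]
I would then apply It\^o's formula to $\|v^\eps(t)\|^2$. The operator terms $2\lambda\|v^\eps\|^2$ and $2\sum_j \|B_j v^\eps\|^2$ are nonnegative and may be discarded, the nonlinearity contributes $\ge -2\gamma\|v^\eps\|^2$ by \eqref{f1}, and the drift correction $\bigl(v^\eps,(\sigma(u^\eps_h)-\sigma(u_h))h\bigr)$ is bounded by $L_\sigma\|v^\eps\|^2\|h\|$ via \eqref{sig2}. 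Combined with the deterministic bound $\|\sigma(u^\eps_h)\|^2_{\call_2(\ell^2,\ell^2)} \le L_\sigma^2$ from \eqref{sig1}, this yields the pointwise estimate
\[
\|v^\eps(t)\|^2 \le \int_0^t \bigl(2\gamma + 2L_\sigma\|h(s)\|\bigr)\|v^\eps(s)\|^2\,ds + \eps L_\sigma^2 T + 2\sqrt{\eps}\,M^{\ast}(T),
\]
where $M^{\ast}(T) = \sup_{r\in[0,T]} \left| \int_0^r \bigl(v^\eps(s),\sigma(u^\eps_h(s))\,dW(s)\bigr) \right|$.

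The key observation is that for $h \in \cala_N$, Cauchy--Schwarz gives the \textbf{pathwise} bound $\int_0^T \|h(s)\|\,ds \le \sqrt{T}\,N$ almost surely. Hence Gronwall's lemma (applied pathwise) produces a constant $C_1 = C_1(T,N,\gamma,L_\sigma)$ \emph{independent} of both $u_0 \in \ell^2$ and $h \in \cala_N$ such that
\[
\sup_{t\in[0,T]} \|v^\eps(t)\|^2 \le C_1\bigl(\eps + \sqrt{\eps}\,M^{\ast}(T)\bigr).
\]
Taking expectation and applying the Burkholder--Davis--Gundy inequality with \eqref{sig1} gives $\E M^{\ast}(T) \le C_2 L_\sigma \sqrt{T}\,\bigl(\E\sup_{t\in[0,T]}\|v^\eps(t)\|^2\bigr)^{1/2}$. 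Setting $a_\eps = \bigl(\E\sup_{t\in[0,T]}\|v^\eps(t)\|^2\bigr)^{1/2}$, I obtain the quadratic inequality $a_\eps^2 \le C_3\eps + C_4\sqrt{\eps}\,a_\eps$, which forces $a_\eps \le C_5\sqrt{\eps}$. Since the constants do not depend on $u_0$ or $h$, taking the supremum over $u_0 \in \ell^2$ and $h \in \cala_N$ and letting $\eps \to 0$ completes the proof.

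The main obstacle is that the Gronwall coefficient $2\gamma + 2L_\sigma\|h(s)\|$ is random (through the control $h$), which precludes naively passing to expectations before the Gronwall step. This is resolved precisely by the definition of $\cala_N$: the $L^2$-in-time almost sure bound on $h$ translates, via Cauchy--Schwarz, into a \emph{deterministic} a.s.\ bound on $\int_0^T\|h\|\,ds$, keeping the exponential Gronwall factor uniform and deterministic. A secondary subtlety is the self-referential appearance of $\E\sup\|v^\eps\|^2$ on both sides after invoking BDG, which is handled by solving the resulting quadratic inequality for $a_\eps$.
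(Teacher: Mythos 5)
Your proof is correct and follows essentially the same route as the paper: the same Itô estimate for $\|u^\eps_h-u_h\|^2$, the same use of \eqref{f1}, \eqref{sig1}--\eqref{sig2}, the same pathwise Gronwall argument exploiting the a.s.\ bound $\int_0^T\|h\|\,ds\le \sqrt{T}N$ for $h\in\cala_N$, and the same Burkholder estimate on the stochastic integral. The only cosmetic difference is at the final step, where you solve the quadratic inequality in $a_\eps=(\E\sup\|v^\eps\|^2)^{1/2}$ rather than absorbing half of $\E\sup\|v^\eps\|^2$ via Young's inequality as the paper does; these are equivalent.
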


\begin{proof}
  By   
\eqref{ctr1} and \eqref{gep} we have 
   $$
 d  ( u_{h}^\eps -  u_{h}  )
 +
 + \lambda  (  u^\eps_{h}  
-  u_{h } ) dt
 +     A ( u^\eps_{h}  -  u_{h  }
 ) dt 
 +
   ( f(u^\eps_{h}  ) -  f(u_{h}  ) ) dt
$$
  \be\label{cgep p1}
  =
\left ( 
 \sigma(u^\eps_{h}  )  h
 -
  \sigma(u_{h}  )  h
\right )  dt
 + \sqrt{\eps} \sigma( u^\eps_{h}  )  
   dW  ,
\ee
with  
$u_{h}^\eps(0) -  u_{h} (0)=0$.
 By 
 \eqref{f1},
 \eqref{cgep p1} and It\^{o}\rq{}s formula we obtain
$$
 \|  u_{h}^\eps(t) -  u_{h}(t) \|^2
 +
   2  (\lambda -\gamma)
  \int_0^t
  \| u^\eps_{h}(s)  -  u_{h}(s)
  \|^2  ds
  $$
 $$
 \le 
  2 
  \int_0^t
  \left ( 
 \sigma( u^\eps_{h} (s) )  h(s)
 -
  \sigma( u_{h} (s)  )  h (s),
  \  u^\eps_{h}(s)  -  u_{h}(s)
\right )  dt
$$
  $$
+ \eps
\int_0^t
 \|   \sigma( u^\eps_{h} (s)  )\|
_{\call_2(H, \ell^2)}^2  ds
+ 
2\sqrt{\eps} \int_0^t
\left (
u^\eps_{h}(s)  -  u_{h}(s),\
 \sigma( u^\eps_{h} (s)  ) 
   dW  \right ) ,
   $$ and hence
   for all $t\in [0,T]$,
 $$ 
 \sup_{0\le r \le t}
 \|  u_{h}^\eps(r )
  -  u_{h}( r ) \|^2
 \le   2  \gamma
  \int_0^{t  }
  \| u^\eps_{h}(s)  -  u_{h}(s)
  \|^2  ds
  $$
  $$
  + 
  2  
  \int_0^{ t }
  \|
 \sigma( u^\eps_{h} (s)  )  
 -
  \sigma( u_{h} (s)  )\|_{\call_2
  (\ell^2, \ell^2)} \|  h (s)\|  \|
    u^\eps_{h}(s)  -  u_{h}(s)\|
   dt
$$
   \be\label{cgep p2}
+ \eps
\int_0^ { t }
 \|   \sigma( u^\eps_{h} (s)  )\|
_{\call_2(H, \ell^2)}^2  ds
+ 
2\sqrt{\eps}
\sup_{0\le r\le t}
 \left |\int_0^ { r }
\left (
u^\eps_{h}(s)  -  u_{h}(s),\
 \sigma( u^\eps_{h} (s)  ) 
   dW  \right )
   \right | .
\ee
By \eqref{sig2} we have
$$
   2  
  \int_0^{ t }
  \|
 \sigma( u^\eps_{h} (s)  )  
 -
  \sigma( u_{h} (s)  )\|_{\call_2
  (\ell^2, \ell^2)} \|  h (s)\|  \|
    u^\eps_{h}(s)  -  u_{h}(s)\|
   dt
   $$
      \be\label{cgep p3}
\le
2L_\sigma \int_0^t
 \|  h (s)\| 
 \sup_{0\le r \le s}
  \|
    u^\eps_{h}(r)  -  u_{h}(r)\|^2 ds.
    \ee
By \eqref{sig1} we get
for all  $t\in [0,T]$, $\eps \in [0,1]$
and $h\in \cala_N$,
 \be\label{cgep p4}
 \eps
\int_0^ { t }
 \|   \sigma( u^\eps_{h} (s)  )\|
_{\call_2(H, \ell^2)}^2  ds
\le \eps L_\sigma^2T.
\ee
       By \eqref{cgep p2}-\eqref{cgep p4}
        we get
for all  $t\in [0,T]$, $\eps \in [0,1]$
and $h\in \cala_N$,
$$
  \sup_{0\le r \le t}
 \|  u_{h}^\eps(r )
  -  u_{h}( r ) \|^2
 \le    
  \int_0^{t  }
  \left (
  2  \gamma
  +2L_\sigma \| h(s) \|  \right ) 
  \sup_{0\le r\le s} \|   u_{h}^\eps(r )
  -  u_{h}( r ) \|^2
  ds
 $$
    \be\label{cgep p5}
    +
\eps L_\sigma^2 T  
  + 
2\sqrt{\eps}
\sup_{0\le r\le T}
 \left |\int_0^ { r }
\left (
u^\eps_{h}(s)  -  u_{h}(s),\
 \sigma( u^\eps_{h} (s)  ) 
   dW  \right )
   \right | .
\ee
   By \eqref{cgep p5} and Gronwall\rq{}s lemma,
  we get
  for all  $t\in [0,T]$, $\eps \in [0,1]$
and $h\in \cala_N$, 
$P$-almost surely,
  \be\label{cgep p6}
 \sup_{0\le r \le t}
 \|  u_{h}^\eps(r)
  -  u_{h}( r) \|^2 
\le 
\eps L_\sigma^2 Tc_1  
  + 
2\sqrt{\eps}c_1
\sup_{0\le r\le T}
 \left |\int_0^ { r }
\left (
u^\eps_{h}(s)  -  u_{h}(s),\
 \sigma( u^\eps_{h} (s)  ) 
   dW  \right )
   \right |
 \ee
where 
$ 
c_1= 
e^{
 2\gamma T +
2L_\sigma  T^{\frac 12} N  
 } $.  
It follows from \eqref{cgep p6}   that
for all    $\eps \in [0,1]$
and $h\in \cala_N$,  
$$
 \E \left ( \sup_{0\le r \le T}
 \|  u_{h}^\eps(r)
  -  u_{h}( r) \|^2 
  \right )
  $$
    \be\label{cgep p7}
\le 
\eps L_\sigma^2 Tc_1  
  + 
2\sqrt{\eps}c_1
\E \left (
\sup_{0\le r\le T}
 \left |\int_0^ { r }
\left (
u^\eps_{h}(s)  -  u_{h}(s),\
 \sigma( u^\eps_{h} (s)  ) 
   dW  \right )
   \right |
   \right ).
 \ee
 By \eqref{sig1}  and
 the Burkholder inequality we have
 $$
2\sqrt{\eps}c_1
\E \left (
\sup_{0\le r\le T}
 \left |\int_0^ { r }
\left (
u^\eps_{h}(s)  -  u_{h}(s),\
 \sigma( u^\eps_{h} (s)  ) 
   dW  \right )
   \right |
   \right )
$$
$$
\le 6 \sqrt{\eps}c_1
\E \left ( 
\left (
 \int_0^ {T}
 \|
u^\eps_{h}(s)  -  u_{h}(s)\|^2
 \|\sigma( u^\eps_{h} (s)  ) \|^2_{\call_2
 (\ell^2, \ell^2)} ds
     \right )^{\frac 12}
     \right )
 $$
 $$
\le 6 \sqrt{\eps}c_1
\E \left ( 
\sup_{0\le r \le T}
\|
u^\eps_{h}(r)  -  u_{h}(r)\| 
\left (
 \int_0^ {T}
  \|\sigma( u^\eps_{h} (s)  ) \|^2_{\call_2
 (\ell^2, \ell^2)} ds
     \right )^{\frac 12}
     \right )
 $$
 $$
\le 
{\frac 12}
\E \left ( 
\sup_{0\le r \le T}
\|
u^\eps_{h}(r)  -  u_{h}(r)\| ^2
\right )
+
18 \eps c_1^2 
\E \left (    
 \int_0^ {T}
  \|\sigma( u^\eps_{h} (s)  ) \|^2_{\call_2
 (\ell^2, \ell^2)} ds 
     \right )
 $$
 $$
\le 
{\frac 12}
\E \left ( 
\sup_{0\le r \le T}
\|
u^\eps_{h}(r)  -  u_{h}(r)\| ^2
\right )
+
18 \eps c_1^2 TL^2_\sigma,
  $$
which along with \eqref{cgep p7} shows that
for all    $\eps \in [0,1]$,
 $$
 \sup_{u_0\in \ell^2}
 \sup_{h\in \cala_N}\E \left ( \sup_{0\le r \le T}
 \|  u_{h}^\eps(r)
  -  u_{h}( r) \|^2 
  \right )
  \le 
2 \eps L_\sigma^2 Tc_1  
  +  
36 \eps c_1^2 TL^2_\sigma.
$$
This completes the proof.
\end{proof}

As an immediate consequence
of Lemma \ref{cgep}, we have
the following convergence of solutions
in probability.

\begin{cor}\label{cgep1}
  If 
 \eqref{f1} and  \eqref{sig1}-\eqref{sig2}
 hold,  then for every $N>0$, 
 $T>0$ and    $\delta>0$,
 the solutions of \eqref{ctr1} and
 \eqref{gep} satisfy
 $$
 \lim_{\eps \to 0}\sup_{u_0 \in \ell^2 } \
 \sup_{h\in \cala_N}
 P
 \left (
 \| u^\eps_h (\cdot, 0, u_0)
 -u_h (\cdot, 0, u_0)\|_{C([0,T], \ell^2)}
 >\delta
 \right )
 =0.
 $$
  \end{cor}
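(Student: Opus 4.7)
The plan is to deduce the stated convergence in probability directly from the $L^2$-type convergence already established in Lemma \ref{cgep}, by applying Chebyshev's inequality uniformly in $u_0$ and $h$.

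More precisely, I would fix $T>0$, $N>0$, and $\delta>0$. For any $u_0 \in \ell^2$, $h \in \cala_N$, and $\eps \in (0,1]$, Chebyshev's inequality applied to the nonnegative random variable $\|u^\eps_h(\cdot,0,u_0) - u_h(\cdot,0,u_0)\|^2_{C([0,T],\ell^2)}$ gives
\[
P\left(\|u^\eps_h(\cdot,0,u_0) - u_h(\cdot,0,u_0)\|_{C([0,T],\ell^2)} > \delta\right)
\le \frac{1}{\delta^2}\,\E\left(\|u^\eps_h(\cdot,0,u_0) - u_h(\cdot,0,u_0)\|^2_{C([0,T],\ell^2)}\right).
\]
Taking the supremum over $u_0 \in \ell^2$ and $h \in \cala_N$ on both sides, and then letting $\eps \to 0$, the right-hand side tends to $0$ by Lemma \ref{cgep}. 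This yields the desired conclusion.

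There is no substantive obstacle here; the corollary is a standard consequence of the uniform $L^2$ estimate, and all the real work (the It\^o/Gronwall estimate and the Burkholder bound that made the expectation tend to zero uniformly in $u_0$ and $h$) has already been carried out in the proof of Lemma \ref{cgep}.
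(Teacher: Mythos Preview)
Your argument is correct and is exactly what the paper intends: the corollary is stated there ``as an immediate consequence of Lemma~\ref{cgep}'' with no further proof, and Chebyshev's inequality is the implicit one-line step.
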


  From now on,   
for every 
$\eps\in [0,1]$,
$T>0$ 
and $u_0\in \ell^2$,
we denote 
by 
$\nu^\eps_{u_0}$
 the distribution law
of the solution 
$
 u^\eps
 (\cdot, 0, u_0)$ 
of \eqref{intr3} 
    in
 $C([0,T], \ell^2)$.
 Then by \eqref{lset0} and 
 Corollary \ref{cgep1} we infer
 from \cite[Theorem 2.13]{sal2} that
 the family
$\{\nu^\eps_{u_0}\}
_{\eps\in (0,1)}$
satisfies the  
Freidlin-Wentzell
uniform LDP
on $C([0,T], \ell^2)$
with rate function $I_{T, u_0}$
  uniformly for all
  $u_0\in \ell^2$.

\begin{thm}\label{uldp}
If    \eqref{f1} and \eqref{sig1}-\eqref{sig2}
hold, then
for every $T>0$,
 the family
$\{u^\eps(\cdot, 0, u_0) \}_{0<\eps< 1}$ of  the solutions
of  \eqref{intr3} 
satisfies  
the  
Freidlin-Wentzell
uniform LDP
on $C([0,T], \ell^2)$
  with rate function $I_{T,u_0}$, 
uniformly for all
$u_0$ in $\ell^2$;  
more precisely:

\begin{enumerate} 
 \item  For every 
    $s\ge 0$,
 $\delta_1>0$  and $\delta_2>0$, there exists
 $\eps_0>0$ such that
\be\label{uldp 1}
 \inf_{ u_0 \in \ell^2} \ 
 \inf_{
 \xi \in I_{T, u_0}^s }
 \left (
 \nu^{\eps}_{u_0} (
 \caln(\xi, \delta_1))
 -e^{- {\frac {I_{T,u_0} 
        (\xi  ) +\delta_2}\eps} }
 \right ) \ge 0,
 \quad \forall \ \eps \le \eps_0  ,
\ee
 where
 $\caln(\xi, \delta_1)
 =\left \{
 u\in C([0,T], \ell^2): \| u-\xi\|
 _{C([0,T], \ell^2)} <\delta_1
 \right \}$.

 \item For every 
 $s_0\ge 0$,
 $\delta_1>0$  and
 $\delta_2>0$,
 there exists $\eps_0>0$ such that
  $$
 \sup_{  u_0 \in \ell^2
  } 
 \nu^{\eps}_{u_0}
 \left (
 {C([0,T], \ell^2 )}
 \setminus \caln(I _{T, u_0}
 ^s, \delta_1)
 \right )
 \le
 e^{-{\frac {s-\delta_2}{\eps}} },
 \quad \forall \ \eps\le \eps_0,
 \ \forall \ s\le s_0,
$$
 where $\caln (I_{T,u_0}^s,\delta_1)
 =\left \{
 u\in  {C([0,T], \ell^2)}: {\rm{dist}} (u, I_{T,u_0}
 ^s) <\delta_1
 \right \} $.
  \end{enumerate}
\end{thm}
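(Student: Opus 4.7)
\textbf{Proof plan for Theorem \ref{uldp}.} The plan is to deduce the uniform Freidlin-Wentzell LDP as a direct application of the weak convergence formulation of uniform LDP in \cite[Theorem 2.13]{sal2}, since all the work has been front-loaded into the preceding lemmas. That theorem reduces the assertion of the uniform LDP with rate function $I_{T,u_0}$, uniform over $u_0 \in \ell^2$, to the following two sufficient conditions, which I would verify one by one:
\begin{itemize}
\item[(C1)] For every $u_0 \in \ell^2$ and $N>0$, the set $\{u_h(\cdot,0,u_0) : h\in L^2(0,T;\ell^2),\ \|h\|_{L^2(0,T;\ell^2)}\le N\}$ is compact in $C([0,T],\ell^2)$.
\item[(C2)] For every $N>0$ and $\delta>0$,
\[
\lim_{\eps\to 0}\sup_{u_0\in \ell^2}\sup_{h\in \cala_N} P\bigl(\|u^\eps_h(\cdot,0,u_0)-u_h(\cdot,0,u_0)\|_{C([0,T],\ell^2)}>\delta\bigr)=0.
\]
\end{itemize}

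Condition (C1) is exactly the statement \eqref{lset0}, which the author has already recorded, citing the continuity of $h\mapsto u_h(\cdot,0,u_0)$ from the weak topology of $L^2(0,T;\ell^2)$ to the strong topology of $C([0,T],\ell^2)$ (argued as in \cite{wan2024}) together with the weak compactness of balls in $L^2(0,T;\ell^2)$. Condition (C2) is precisely Corollary \ref{cgep1}, which follows from the strong $L^2(\Omega)$ estimate in Lemma \ref{cgep} by an application of Markov's inequality. Once these two items are stated, a direct appeal to \cite[Theorem 2.13]{sal2} produces both the lower bound \eqref{uldp 1} and the matching upper bound, completing the proof; the representation of $I_{T,u_0}$ as the contraction-type infimum \eqref{act1} is the rate function produced by the weak convergence machinery.

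The main subtlety, already absorbed into Lemma \ref{cgep}, is that condition (C2) has to be uniform over \emph{all} $u_0 \in \ell^2$ and not only over compact or bounded sets. This uniformity works out because the comparison in Lemma \ref{cgep} starts from $u^\eps_h(0)-u_h(0)=0$, so the Gronwall-type estimate accumulates no $u_0$-dependent error; the remaining constants depend only on $N$, $T$ and $L_\sigma$ thanks to the uniform bound \eqref{sig1} on $\sigma$. Thus the only real ``obstacle'' here is conceptual---matching the hypotheses of Salins' uniform LDP theorem to the quantities already constructed---while the analytic content has been discharged by \eqref{lset0}, Lemma \ref{cgep}, and Corollary \ref{cgep1}. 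The proof as written in the paper should therefore amount to a short verification paragraph followed by a citation.
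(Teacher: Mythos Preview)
Your proposal is correct and matches the paper's approach exactly: the paper deduces Theorem \ref{uldp} directly from \cite[Theorem 2.13]{sal2} by invoking the compactness statement \eqref{lset0} and the uniform-in-$u_0$ convergence in probability given by Corollary \ref{cgep1}. Your identification of (C1) and (C2), and your remark on why the uniformity over all $u_0\in\ell^2$ goes through, are precisely the ingredients the paper uses.
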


  The goal of the present paper
  is to prove  the LDP of the
invariant measures of the stochastic
equation \eqref{intr3},
for which   the
Dembo-Zeitouni uniform
LDP of solutions is also needed.
The   following  continuity of level sets
   of $I_{T,u_0}$  will be used
   when proving 
   the
Dembo-Zeitouni uniform
LDP of solutions.

   \begin{lem}
   \label{levc}
   If \eqref{f1}
   and \eqref{sig1}-\eqref{sig2}
   hold, and 
     $u_{0,n} \to u_0$ in $\ell^2$, then
   for every $T>0$  and $s\ge 0$, 
   $$
   \lim_{n\to \infty}
   \max
   \left \{
   \sup_{v\in I^s_{T,u_0}}
   {\rm dist}_{C([0,T],\ell^2)}
   (v,  I^s_{T, u_{0,n}}),
   \ \ 
    \sup_{v\in I^s_{T,u_{0,n}}}
   {\rm dist}_{C([0,T],\ell^2)}
   (v, I^s_{T, u_{0}})
   \right \} =0. 
   $$
    \end{lem}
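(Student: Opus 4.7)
The plan is to reduce both suprema to direct applications of the continuous dependence estimate \eqref{cest3 1}, uniformly over the level set. The first step I would carry out is to verify that for every $v\in I^s_{T,u_0}$ the infimum defining $I_{T,u_0}(v)$ is attained. Picking a minimizing sequence $h_m$ with $\tfrac12\|h_m\|^2_{L^2(0,T;\ell^2)} \to I_{T,u_0}(v)\le s$, the sequence is bounded in $L^2(0,T;\ell^2)$ and has a weak cluster point $h$; by the weak-to-strong continuity of the control-to-solution map recalled just before \eqref{lset0} one gets $u_h(\cdot,0,u_0)=v$, and weak lower semicontinuity of the norm yields $\tfrac12\|h\|^2_{L^2(0,T;\ell^2)}=I_{T,u_0}(v)\le s$. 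Hence every $v\in I^s_{T,u_0}$ is of the form $u_h(\cdot,0,u_0)$ for some $h$ with $\|h\|_{L^2(0,T;\ell^2)}\le\sqrt{2s}$, and the analogous representation holds for elements of $I^s_{T,u_{0,n}}$.

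Next I would bound the first one-sided Hausdorff distance. Given $v=u_h(\cdot,0,u_0)\in I^s_{T,u_0}$ with $\|h\|_{L^2(0,T;\ell^2)}\le\sqrt{2s}$, I set $\tilde v_n:=u_h(\cdot,0,u_{0,n})$. Since the same control $h$ is used, $\tilde v_n\in I^s_{T,u_{0,n}}$, and \eqref{cest3 1} applied with $R=\sqrt{2s}$ gives
\[
\|\tilde v_n-v\|^2_{C([0,T],\ell^2)}\le L_1(T,\sqrt{2s})\,\|u_{0,n}-u_0\|^2,
\]
with a constant independent of $v$. Taking the supremum over $v\in I^s_{T,u_0}$ and letting $n\to\infty$ controls the first term inside the maximum.

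The second one-sided distance is treated symmetrically: for arbitrary $v\in I^s_{T,u_{0,n}}$ I would write $v=u_{h_n}(\cdot,0,u_{0,n})$ with $\|h_n\|_{L^2(0,T;\ell^2)}\le\sqrt{2s}$, compare it to $u_{h_n}(\cdot,0,u_0)\in I^s_{T,u_0}$, and apply \eqref{cest3 1} with the same $R=\sqrt{2s}$, producing the identical constant $L_1(T,\sqrt{2s})$. Because this $L_1$ depends only on $T$ and on the uniform bound $\sqrt{2s}$ on the controls, neither direction presents a real obstacle; the only delicate point worth flagging is the attainment of the infimum on each level set, which is precisely what guarantees that one and the same $L_1$ serves every $v$ simultaneously and so upgrades the pointwise continuous dependence of Lemma \ref{cest3} to uniform Hausdorff continuity of the level sets.
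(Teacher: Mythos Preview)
Your proposal is correct and follows the same route as the paper: represent each element of a level set as $u_h(\cdot,0,\cdot)$ for some control $h$ with $\|h\|_{L^2}\le\sqrt{2s}$, transplant that same control to the other initial datum, and apply the continuous-dependence estimate \eqref{cest3 1} with the uniform bound $R=\sqrt{2s}$. The only addition you make is the explicit verification that the infimum defining $I_{T,u_0}$ is attained; the paper asserts this representation without comment, but it is indeed justified by the weak-to-strong continuity of the control-to-solution map recorded just before \eqref{lset0}, exactly as you argue.
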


  \begin{proof}
    Note that if $v\in I^s_{T,u_0}$,
    then 
    $v= u_h (\cdot, 0, u_0)$
    for some $h$ with
    $\| h\|_{L^2(0,T; \ell^2)}^2\le 2s$.  
 Then  
 $u_h (\cdot, u_{0,n}) \in I^s_{T, u_{0,n}}$
 and   by Lemma \ref{cest3}
 we  have
   $$
   {\rm dist}_{C([0,T],\ell^2)}
   (v, I^s_{T, u_{0,n}})
   \le 
    {\rm dist}_{C([0,T],\ell^2)}
   ( u_h(\cdot, 0, u_{0} )  , \
   u_h(\cdot,  0, u_{0,n} ))
   \le c_1\| u_{0,n} -u_0\|,
  $$
  where $c_1>0$ is a constant independent of $n$,
  and hence 
  \be\label{levc p1}
    \sup_{v\in I^s_{T,u_0}}
   {\rm dist}_{C([0,T],\ell^2)}
   (v, I^s_{T, u_{0,n}})
    \le c_1\| u_{0,n} -u_0\| \to 0.
  \ee
  By the same argument, we can also obtain
  $$ 
   \sup_{v\in I^s_{T,u_{0,n}}}
   {\rm dist}_{C([0,T],\ell^2)}
   (v, I^s_{T, u_{0}})
  \le c_1 \| u_{0,n} -u_0\| \to 0,  
  $$
  which together with \eqref{levc p1}
  concludes the proof.
    \end{proof}

    It follows from
     Lemma \ref{levc} and
  \cite[Theorem 2.7]{sal2}
  that the
  Freidlin-Wentzell and  
  the Dembo-Zeitouni
uniform LDP of \eqref{intr3}
on $C([0,T], \ell^2)$
over a compact subset of $\ell^2$
are equivalent, which along with 
  Theorem \ref{uldp} 
  yields the following result.

\begin{thm}\label{uldp_DZ}
If    \eqref{f1}
and \eqref{sig1}-\eqref{sig2}
hold,   then
 the family
$\{u^\eps(\cdot, 0, u_0) \}_{0<\eps< 1}$ of  the solutions
of   \eqref{intr3}
satisfies  
the   Dembo-Zeitouni
uniform LDP
on $C([0,T], \ell^2)$
  with rate function $I_{T,u_0}$, 
uniformly with respect to 
$u_0$
in all compact subsets  of $\ell^2$;
more precisely:

\begin{enumerate} 
 \item  For any
 compact subset  $\calk $ of $\ell^2$
 and any open subset $G$ of $C([0,T], \ell^2)$,
 $$
 \liminf_{\eps \to 0}
  \inf _{u_0 \in \calk}
  \left (
  \eps \ln 
  P(u^\eps (\cdot, 0,  u_0)
  \in G )
  \right )
  \ge -\sup_{u_0 \in \calk}
  \inf_{v\in G} I_{T, u_0} (v).
 $$
 
 \item  For any
 compact subset  $\calk $ of $\ell^2$
 and 
 any closed subset $F$ of $C([0,T], \ell^2)$,
 $$
 \limsup_{\eps \to 0}
  \sup_{u_0 \in \calk}
  \left (
  \eps \ln 
  P(u^\eps ( \cdot, 0, u_0)
  \in F )
  \right )
   \le  -\inf_{u_0 \in \calk}
  \inf_{v\in F} I_{T, u_0} (v).
 $$
  \end{enumerate}
\end{thm}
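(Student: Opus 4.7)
The plan is to derive Theorem \ref{uldp_DZ} as a direct consequence of the already-established Freidlin-Wentzell uniform LDP (Theorem \ref{uldp}) combined with the level-set continuity established in Lemma \ref{levc}, by invoking the abstract equivalence result of Salins \cite[Theorem 2.7]{sal2}.

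Concretely, I would fix an arbitrary compact subset $\calk \subset \ell^2$ and proceed as follows. First, I observe that Theorem \ref{uldp} already delivers the Freidlin-Wentzell uniform LDP for $\{\nu^\eps_{u_0}\}_{\eps\in(0,1)}$ on $C([0,T],\ell^2)$ with good rate function $I_{T,u_0}$, uniformly over \emph{all} $u_0\in\ell^2$; in particular it holds uniformly over $u_0\in\calk$. Second, Lemma \ref{levc} guarantees that the map $u_0 \mapsto I^s_{T,u_0}$ is continuous in the Hausdorff metric on compact subsets of $C([0,T],\ell^2)$: whenever $u_{0,n}\to u_0$ in $\ell^2$, both
\[
\sup_{v\in I^s_{T,u_0}} {\rm dist}_{C([0,T],\ell^2)}(v, I^s_{T, u_{0,n}}) \to 0, \qquad \sup_{v\in I^s_{T,u_{0,n}}} {\rm dist}_{C([0,T],\ell^2)}(v, I^s_{T, u_{0}}) \to 0.
\]
These two ingredients are exactly the hypotheses required to apply \cite[Theorem 2.7]{sal2}, which asserts that, over a compact index set of initial data on which the level sets depend Hausdorff-continuously, the Freidlin-Wentzell uniform LDP and the Dembo-Zeitouni uniform LDP are equivalent.

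Applying this equivalence converts the uniform Freidlin-Wentzell statement of Theorem \ref{uldp}, restricted to $u_0\in\calk$, into the paired lower and upper bounds
\[
\liminf_{\eps\to 0} \inf_{u_0\in\calk} \eps \ln P(u^\eps(\cdot,0,u_0)\in G) \ge -\sup_{u_0\in\calk}\inf_{v\in G} I_{T,u_0}(v)
\]
for open $G\subset C([0,T],\ell^2)$, and the matching upper bound
\[
\limsup_{\eps\to 0} \sup_{u_0\in\calk} \eps \ln P(u^\eps(\cdot,0,u_0)\in F) \le -\inf_{u_0\in\calk}\inf_{v\in F} I_{T,u_0}(v)
\]
for closed $F$. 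Since $\calk$ was an arbitrary compact subset of $\ell^2$, this is exactly the conclusion of Theorem \ref{uldp_DZ}.

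The main obstacle has already been dealt with: it is the Hausdorff-continuity of the level sets in Lemma \ref{levc}, whose proof relied on the quantitative continuous dependence estimate of Lemma \ref{cest3} applied along a fixed control $h$ with $\tfrac12\|h\|_{L^2}^2\le s$. The present theorem is therefore essentially a packaging step, and the proof will be brief once the reference to \cite[Theorem 2.7]{sal2} is invoked. I would not expect any additional technical difficulty, since the non-compactness of bounded sets in $\ell^2$, which is the central obstacle elsewhere in the paper, plays no role here precisely because the theorem is stated only over compact initial data.
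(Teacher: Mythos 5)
Your proposal matches the paper's own argument exactly: the paper deduces Theorem \ref{uldp_DZ} by combining Theorem \ref{uldp} with the level-set Hausdorff continuity of Lemma \ref{levc} and then invoking the equivalence theorem \cite[Theorem 2.7]{sal2} of Salins over compact sets of initial data. No gap; same route, same ingredients.
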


\section{Rate functions of  invariant measures} 
\setcounter{equation}{0}

In this section, we introduce a rate function
for the invariant measures of \eqref{intr3}
and prove the exponential tightness of invariant measures,
for which we  first consider the 
  long term dynamics
of the deterministic  limiting system
\eqref{intr4}.

Given $t\ge 0$ and $u_0 \in \ell^2$,  denote by 
$ 
S(t)u_0 =  u(t, 0, u_0)$, 
 which  is the solution of \eqref{intr4}  with initial value $u_0$
at initial time $0$.
  By Lemma  \ref{cest1} with $h\equiv 0$ we see that
  the semigroup $\{S(t)\}_{t\ge 0}$
  has a bounded absorbing set in $\ell^2$.
  On the other hand, by Lemma \ref{cest2a}
  we know that $\{S(t)\}_{t\ge 0}$
   is asymptotically compact in $\ell^2$.
   Consequently,   the dynamical system $\{S(t)\}_{t\ge 0}$
 has a     global attractor
$\cala$ in $\ell^2$ in the  sense that
  $\cala$ is   a  compact 
   invariant set in $\ell^2$ and 
  attracts every bounded subset of $\ell^2$.
  Furthermore, by Lemma \ref{cest1}
  with $h\equiv 0$, we have 
 \be\label{atthbd}
 \sup_{v\in \cala}
 \| v \|^2  \le  2\lambda^{-2}
 \| g \|^2. 
\ee
   If we further assume that
   \be\label{lgc}
   \lambda>\gamma,
   \ee
 then by \eqref{cest3 2} with $h_1\equiv h_2\equiv 0$, we see that
 any two solutions of \eqref{intr4} converge to
 each other as $t\to \infty$, and hence
 $\cala$ is a singleton.
 In this case, we write $\cala=\{u_*\}$.
 By \eqref{atthbd} we know 
  $\| u_* \|^2  \le  2\lambda^{-2}
 \| g \|^2$.

By the idea of  \cite{mar2}, 
 we   now  define
a rate function
$J: \ell^2 \to [0, +\infty]$ by:
for every $v\in \ell^2$,
\be\label{ratea}
J(v)
=\lim_{\delta \to 0}
\ 
\inf \left \{
I_{r,u_*}  (u):
\  r>0,  \  
u\in C([0,r], \ell^2 ),
\ u(0)=u_*, 
 \ u(r)
 \in B_{\ell^2} (v, \delta)
\right \},
\ee 
where 
$u_*$ is the unique element of $\cala$,  
$I_{r,u_*}$ is given by  \eqref{act1},
and $B_{\ell^2} (v, \delta) $ is the open ball
in $\ell^2$ centered at $v$  and of radius $\delta$.
  Given $s\ge 0$, the $s$-level set
of $J$ is the set:  
$ 
J^s
=\left \{
v \in \ell^2 : \ J(v) \le s
\right \} .
$

We first prove $J$ given by
\eqref{ratea} is a good
rate function in $\ell^2$
by the uniform tail-ends 
estimates.

   \begin{lem}\label{coj}
   If 
  \eqref{f1},
   \eqref{sig1}-\eqref{sig3} 
   and \eqref{lgc}
  are fulfilled, then
 for every $s\ge 0$,
the  level set 
$J^s$  	is compact
in $\ell^2$.
	\end{lem}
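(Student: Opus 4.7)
My plan is to show the three properties that together give compactness of $J^s$ in $\ell^2$: boundedness, uniform tail decay, and closedness. The first two combine (via the standard characterization of precompact subsets of $\ell^2$) to give precompactness, and the third upgrades this to compactness.

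\textbf{Boundedness and tail decay.} Fix $v\in J^s$. By the definition \eqref{ratea}, for every $\eta>0$ and every $\delta>0$ there exist $r>0$ and $h\in L^2(0,r;\ell^2)$ with $u_h(0)=u_*$, $u_h(r)\in B_{\ell^2}(v,\delta)$, and $\tfrac12\|h\|_{L^2(0,r;\ell^2)}^2\le s+\eta$. Crucially, the bound $R:=\sqrt{2(s+\eta)}$ on $\|h\|_{L^2(0,r;\ell^2)}$ is independent of $r$. Applying Lemma \ref{cest1} then gives
\[
\|u_h(r)\|^2 \le e^{-\lambda r}\|u_*\|^2+2\lambda^{-2}\|g\|^2+2\lambda^{-1}L_\sigma^2 R^2,
\]
which is bounded by a constant $M=M(s,\lambda,\|g\|,L_\sigma)$ independent of $r,h,\eta$, so $\|v\|\le M^{1/2}+\delta$; letting $\delta\to 0$ gives a uniform bound on $J^s$. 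For the tails, I apply Lemma \ref{cest2} with $\calk=\{u_*\}$ and $R$ as above: since the cut-off threshold $k_0$ depends only on $(R,\delta,\{u_*\})$ and not on the terminal time $r$, for every $\eta'>0$ we obtain a single $k_0$ such that $\sum_{|i|\ge k_0}|(u_h(r))_i|^2<\eta'$ uniformly over all admissible $r,h$. Passing to the limit $\delta\to 0$ transfers the same tail bound to $v\in J^s$.

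\textbf{Precompactness.} Combining the previous two bounds, $J^s$ is bounded in $\ell^2$ and its elements have uniformly small tails, which is the classical criterion for relative compactness in $\ell^2$.

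\textbf{Closedness.} Suppose $v_n\in J^s$ with $v_n\to v$ in $\ell^2$. For any prescribed $\delta>0$, choose $n$ with $\|v_n-v\|<\delta/2$. Since $v_n\in J^s$, for any $\eta>0$ there exist $r,h$ with $u_h(0)=u_*$, $u_h(r)\in B_{\ell^2}(v_n,\delta/2)$ and $\tfrac12\|h\|_{L^2(0,r;\ell^2)}^2\le s+\eta$. By the triangle inequality $u_h(r)\in B_{\ell^2}(v,\delta)$, so the infimum defining $J$ at scale $\delta$ is at most $s+\eta$; sending $\eta\to 0$ and then $\delta\to 0$ yields $J(v)\le s$, i.e.\ $v\in J^s$. (The hypothesis \eqref{lgc} is used tacitly to guarantee that $\cala=\{u_*\}$ is a single point, so that the reference trajectory in the definition of $J$ is unambiguous; it is otherwise not needed here.)

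\textbf{Main obstacle.} The only nontrivial point is verifying that the tail bound can be chosen independently of the terminal time $r$ — an arbitrary quantity in the definition of $J$. This is exactly what Lemma \ref{cest2} delivers, since its cut-off index $k_0$ depends on the $L^2$-norm of the control but not on $T$; without this uniformity in $T$, one would be forced to track long-time behaviour of controlled trajectories separately.
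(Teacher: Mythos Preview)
Your argument is correct and follows essentially the same route as the paper: both use Lemma~\ref{cest1} for boundedness and Lemma~\ref{cest2} (with $\calk=\{u_*\}$ and the control bounded by $\sqrt{2(s+\eta)}$) for uniform tail decay, then invoke the standard $\ell^2$ precompactness criterion. You supply the closedness argument in full, which the paper omits, and your emphasis that the cut-off $k_0$ in Lemma~\ref{cest2} is independent of the terminal time is exactly the point that makes the tail estimate go through; only the minor remark that $M$ is ``independent of $\eta$'' should read ``independent of $\eta\in(0,1]$'' (or fix $\eta=1$ for the bound), but this is cosmetic.
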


\begin{proof} 
	Let $s\ge 0$ be fixed. We  now
	prove  
	the set $J^s$ is  precompact in $\ell^2$.
	 If  $v\in J^s$, then
	 for every $n\in \N$, by \eqref{ratea} we 
	 see  that 
	there exist
	$r_n>0$  and $u_n\in
	C([0,r_n], \ell^2)$ such that
	 \be\label{coj p1}
	u_n (0) =u_{*} ,
	\quad \| u_n (r_n) -v \| <
	{\frac 1n},
\quad I_{r_n, u_{*} }
(u_n)< s+{\frac 1n}.
\ee
By \eqref{act1}  and \eqref{coj p1} we find that
  there exists
$h_n \in L^2(0, r_n; \ell^2)$ such that
 \be\label{coj p2}
 u_n(\cdot)  = u_{h_n}
(\cdot, 0, u_{*}),
\quad \text{and} \  \ 
  {\frac 12} \int_0^{r_n}
\|  h_n (s) \|^2 ds
<   
s+{\frac 1n} ,
\ee 
where $u_{h_n} (\cdot, 0, u_{*})$
is the solution
of \eqref{ctr1} corresponding to the
	  initial
	value  $u_{*} $
	and control $h_n$.
By \eqref{coj p1}-\eqref{coj p2} we get
\be\label{coj p3} 
\| u_{h_n}
(r_n, 0,  u_{*}  ) 
-v \| <{\frac 1n} \to 0.
 \ee

By   
 \eqref{coj p2} and Lemma 
 \ref{cest1}  we obtain 
\be\label{coj p4} 
\| u_{h_n} (r_n,0,   u_{*})
\|  
\le c_1,
\quad \forall \  n\in \N,
\ee
where
$c_1=c_1(\lambda,
L_\sigma, g , s)>0$
is a constant independent of $n$.
By 
\eqref{coj p3}-\eqref{coj p4} we have
 \be\label{coj p5}
 \| v\|   \le c_1,
 \quad \forall \ 
 v\in J^s.
 \ee 

By  
  \eqref{coj p2}, we get from
Lemma \ref{cest2} that
for each $\delta>0$,
 there exists
$k_0=k_0 (\delta, s)>0$
such that for all $n\in \N$,
\be\label{coj p6} 
\sum_{|i| \ge k_0}
|
(u_{h_n} (r_n, 0,  u_{*} ))_i
|^2 <
{\frac {1}{8}}\delta^2.
\ee 
By \eqref{coj p3}
and \eqref{coj p6} we get
\be\label{coj p7}
 \sum_{|i| \ge k_0}
|v_i|^2
\le {\frac {1}{8}}\delta^2,
\quad \forall \ v\in J^s.
\ee 

On the other hand, by \eqref{coj p5}
we see that
the set
$\{ (v_i)_{|i| <k_0}:  v\in J^s \}$ is 
bounded in a finite-dimensional space, and hence
precompact, which along with \eqref{coj p7}
shows that the set $J^s$ has a 
  finite open cover
of radius $ \delta$  
in $\ell^2$ for every $\delta>0$;
that is, $J^s$ is precompact in $\ell^2$.
In addition, one may    
check that the set 
  $J^s$ is  also closed in $\ell^2$,
  and thus compact,
  which completes the proof.
\end{proof}

%
%Next, we show the closedness of $J^s$
%in $\ell^2$ by the idea of 
%  \cite{mar2}.
%    Suppose $v_n \in J^s$
%and $v\in \ell^2$ such that $v_n
%\to v$ in $\ell^2$. 
%Since $v_n \to v$ in $\ell^2$,
%for every $\delta_1>0$,
%there exists $m_0 =m_0 (\delta_1)>0$
%such that
%\be\label{coj p10}
%\| v_{m_0} -v\| <{\frac 12} \delta_1.
%\ee 
%Since $v_{m_0} \in J^s$,
%for every $\delta_2>0$,
%there exist $r>0$ and $u\in
%C([0,r] ,  \ell^2)$
%such that
%$u(0) \in \cala$,
%$u(r) \in B_{\ell^2} (v_{n_0}, {\frac 12} \delta_1 )$
%and $I_r(u)
%< s+ \delta_2$.
%Therefore, 
%there exists
%  $h\in L^2(0, r; \ell^2)$
%  such that
%  \be\label{coj p11}
%  {\frac 12} 
%\int_0^r \| h(s)\|^2 ds
%< s + \delta_2,
%\quad \quad  \| u_h (r,0,  u_0) - v_{n_0}\|
%< {\frac 12} \delta_1,
%\ee 
%where
%$u_h(\cdot, 0, u_0)$ is the solution
%of \eqref{ctr11} 
%with initial condition
%$u_0 =u(0)$.
%By \eqref{coj p10}-\eqref{coj p11}
%we have
% $$  \| u_h (r, 0, u_0) - v \|
% <   \delta_1,
%$$
%which along with \eqref{coj p11}
%implies that
%$$
%J(v) \le s + \delta_2,
%\quad \forall \ \delta_2>0,
%$$
%Then we see 
%    $v\in J^s$
%    and thus $J^s$ is closed in $\ell^2$.
 % \end{proof}

  \begin{lem}\label{ldpc1}
  If \eqref{f1},
   \eqref{sig1}-\eqref{sig3} 
   and \eqref{lgc} hold,
  then
  for any  $\delta_1>0, \delta_2>0$ and $s>0$, there exists $\delta>0$
such that for all $t>0$,
$$
\left \{
u(t):
\ u\in C([0,t],\ell^2),
\ u(0) \in  \cala_\delta,
\ I_{t, u(0)}  (u)
\le s -\delta_1 
\right \}
\subseteq J^{s} _{\delta_2} , 
 $$
 where 
 $\cala_\delta$ is the $\delta$-neighborhood of
 $\cala$ and 
 $J^{s}_{\delta_2}$
 is the   $\delta_2$-neighborhood of
   $J^{s}$.
 \end{lem}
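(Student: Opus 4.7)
\textbf{Proof proposal for Lemma \ref{ldpc1}.}

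The plan is to exploit the fact that, under \eqref{lgc}, the global attractor of \eqref{intr4} reduces to a singleton $\cala=\{u_*\}$, so $\cala_\delta$ is simply $B_{\ell^2}(u_*,\delta)$, and to then compare any competitor $u$ with the controlled path that uses the \emph{same} control but starts exactly at $u_*$. Concretely, fix $\delta_1,\delta_2,s>0$ and suppose $u\in C([0,t],\ell^2)$ satisfies $u(0)\in \cala_\delta$ and $I_{t,u(0)}(u)\le s-\delta_1$. Pick $\eta\in(0,\delta_1)$ and choose $h\in L^2(0,t;\ell^2)$ with $u=u_h(\cdot,0,u(0))$ and $\tfrac12\int_0^t\|h(r)\|^2\,dr\le s-\delta_1+\eta\le s$. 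Then define the auxiliary controlled trajectory $\tilde u(\cdot):=u_h(\cdot,0,u_*)$, i.e.\ the solution of \eqref{ctr1} driven by the same control $h$ but starting from $u_*$.

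Apply the second estimate of Lemma \ref{cest3} (which crucially uses $\lambda>\gamma$) to the pair $(\tilde u,u)$ with $h_1=h_2=h$; the control-difference term vanishes, leaving
\[
\|\tilde u(t)-u(t)\|^2
\le \exp\!\left(-\int_0^t\!\bigl(\lambda-\gamma-2(\lambda-\gamma)^{-1}L_\sigma^2\|h(r)\|^2\bigr)dr\right)\|u_*-u(0)\|^2.
\]
Since $\int_0^t\|h(r)\|^2\,dr\le 2s$ and $\lambda-\gamma>0$, the exponent is bounded above, uniformly in $t>0$, by $4(\lambda-\gamma)^{-1}L_\sigma^2 s$. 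Writing $M_s:=\exp\!\bigl(4(\lambda-\gamma)^{-1}L_\sigma^2 s\bigr)$, we therefore obtain $\|\tilde u(t)-u(t)\|\le \sqrt{M_s}\,\|u_*-u(0)\|\le \sqrt{M_s}\,\delta$ for every $t>0$. It now suffices to choose $\delta:=\delta_2/(2\sqrt{M_s})$; this $\delta$ depends only on $\delta_1,\delta_2,s,\lambda,\gamma,L_\sigma$ (not on $t$ or on the particular $u$), which is exactly the kind of uniformity required by the statement.

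It remains to identify $\tilde u(t)$ as a point of $J^s$. By construction $\tilde u(0)=u_*$ and $I_{t,u_*}(\tilde u)\le\tfrac12\int_0^t\|h(r)\|^2\,dr\le s-\delta_1+\eta$. From the definition \eqref{ratea} of $J$, for any $\delta'>0$ the trajectory $\tilde u$ itself witnesses that the infimum is at most $I_{t,u_*}(\tilde u)$, so
\[
J(\tilde u(t))\le I_{t,u_*}(\tilde u)\le s-\delta_1+\eta<s,
\]
giving $\tilde u(t)\in J^s$. Combined with $\|\tilde u(t)-u(t)\|<\delta_2$, this yields $u(t)\in J^s_{\delta_2}$, as required.

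The main obstacle, and the reason the hypothesis \eqref{lgc} is essential, is obtaining a sensitivity estimate that is uniform in $t$. The first inequality of Lemma \ref{cest3} only provides a constant $L_1=L_1(T,R)$ that blows up with the time horizon, which would be useless here since $t$ is arbitrary. The dissipativity $\lambda>\gamma$ is exactly what turns the Gronwall argument into a \emph{decaying} exponential, letting the $L^2$-bound on $h$ (which is controlled by $s$) absorb the otherwise divergent term and produce a time-uniform constant. Once this uniform sensitivity is in hand, the rest of the argument is essentially a bookkeeping step using the definition of $J$.
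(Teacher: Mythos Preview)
Your proof is correct and follows essentially the same approach as the paper: both arguments replace the given path by the solution of \eqref{ctr1} driven by the same control $h$ but launched from $u_*$, invoke the second estimate of Lemma \ref{cest3} (which, thanks to \eqref{lgc}, gives a bound uniform in $t$), and then use the definition of $J$ to place the comparison endpoint in $J^s$. The only stylistic difference is that the paper argues by contradiction whereas you construct $\delta$ explicitly, but the substance is identical.
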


  \begin{proof} 
  	 Suppose 
  	 the statement  is false,
  	 which implies  that
    there exist
   $\delta_1>0, \delta_2>0$ and $s>0$
   such that for every $k>0$, there
   exist $t_k>0$
   and $u_k \in C([0, t_k],
   \ell^2)$
   such that
   $u_k (0)\in \cala_{\frac 1k}
   $
   and $I_{t_k, u_k(0)}
   (u_k) \le s-\delta_1$,
   but $u_k(t_k)
   \notin J^s_{\delta_2}$.
   Since $I_{t_k, u_k(0)}
   (u_k) \le s-\delta_1$,
   by \eqref{act1} we infer that
   there exists $h_k
   \in L^2(0, t_k; \ell^2)$
   such that
   \be\label{ldpc1 1}
   {\frac 12}
   \int_0^{t_{k}}
   \| h_k (t)
   \|^2 dt
   <s -{\frac 12} \delta_1,
   \quad\text{and }\
   u_k (\cdot) = u_{h_k}
   (\cdot, 0, u_k(0)),
  \ee 
   where  
   $  u_{h_k}
   (\cdot, 0, u_k(0))$
     is the solution of
     \eqref{ctr1} with
     initial value $u_k(0)$
     and control $h_k$.
     Since $u_k(t_k)
     \notin J^s_{\delta_2}$,
     by  \eqref{ldpc1 1}
     we have
      \be\label{ldpc1 2}
      u_{h_k}
     (t_k, 0, u_k(0))
     \notin J^s_{\delta_2} .
    \ee
 Due to    $u_k
 (0)\in \cala_{\frac 1k}$
 and $\cala=\{u_*\}$,
 we see that
 \be\label{ldpc1 3}
 \|  u_k(0) -u_* \|
 <{\frac 1k}.
 \ee
 
 If  $v_{h_k}(\cdot, 0, u_*)$ 
 is  the solution of 
 \eqref{ctr1}
 with initial 
 value $u_*$ 
 and control $h_k$,
 then  \eqref{cest3 2},
 \eqref{ldpc1 1} and  \eqref{ldpc1 3} 
  imply that   
   \be\label{ldpc1 4}
 \| v_{h_k} (t_k, 0, u_*) - u_{h_k}  (t_k,
 0, u_k(0) ) \|
 <{\frac 1k} e^{(\lambda - \gamma)^{-1}
 L^2_\sigma \int_0^{t_k} \| h_k (t)\|^2 dt}
  <{\frac 1k} e^{(\lambda - \gamma)^{-1}
 L^2_\sigma (2s-\delta_1) }.
  \ee
 By \eqref{ldpc1 2}
 and \eqref{ldpc1 4} we see that
 there exists $K=K(s, \delta_1, \delta_2)\in \N$ such that
 \be\label{ldpc1  5}
 v_{h_k} (t_k, 0, u_*) \notin J^s_{{\frac 12}\delta_2},
 \quad \forall \ k \ge K.
 \ee
 By \eqref{ratea} and \eqref{ldpc1 1} we have 
 for all $k\in \N$,
     \be\label{ldpc1 6}
 J(v_{h_k} (t_k,0, u_*) )
 \le I_{t_k, u_*} (v_{h_k})
 \le {\frac 12} \|h_k\|^2_{L^2(0,t_m;\ell^2)}
 <s-{\frac 12}\delta_1 .
\ee
By \eqref{ldpc1 6}
we find that
 $v_{h_k} (t_k, 0, u_* )
 \in J^s$
 for all $k\in \N$,  which is in contradiction
 with \eqref{ldpc1 5}.
\end{proof}

  \begin{lem}\label{ldpc2}
   If \eqref{f1},
   \eqref{sig1}-\eqref{sig3} 
   and \eqref{lgc} hold,
  then
   for any $R>0$ and $\delta>0$, there exists
   $t_0=t_0(R,\delta)>0$ such that
   $$
   \inf
   \left \{
   I_{t_0,u_0} (u):
   \ \|u_0\| \le R,
    u\in C([0,t_0], \ell^2),
     u(0)=u_0 , \ u(t_0)\notin
    \cala_\delta
   \right \} >0.
   $$
   \end{lem}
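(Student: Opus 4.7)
The plan is to combine the exponential stability of the deterministic limit system with continuous dependence on the control, both of which follow directly from \eqref{cest3 2} under assumption \eqref{lgc}. Recall that when $\lambda>\gamma$ the attractor is the singleton $\cala=\{u_*\}$, $\cala_\delta$ is the open ball $B_{\ell^2}(u_*,\delta)$, and by \eqref{atthbd} we have $\|u_*\|\le \sqrt{2}\lambda^{-1}\|g\|$. Set $\alpha:=\lambda-\gamma>0$, $\beta:=2(\lambda-\gamma)^{-1}L_\sigma^2$, and $R_1:=R+\sqrt{2}\lambda^{-1}\|g\|$.

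First, applying \eqref{cest3 2} with $h_1\equiv h_2\equiv 0$, $u_{0,1}=u_0$ (with $\|u_0\|\le R$) and $u_{0,2}=u_*$ gives, since $u_*$ is a fixed point of the uncontrolled semigroup,
\[
\|u(t,0,u_0)-u_*\|\ \le\ R_1\,e^{-\alpha t/2},\qquad t\ge 0.
\]
Second, applying \eqref{cest3 2} with $h_1\equiv 0$, $h_2=h$, $u_{0,1}=u_{0,2}=u_0$, and observing that the exponential factor is bounded by $1$ because $h_1\equiv 0$, yields
\[
\|u(t,0,u_0)-u_h(t,0,u_0)\|^2 \ \le\ \beta \,\|h\|_{L^2(0,t;\ell^2)}^2.
\]

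Combining the two estimates by the triangle inequality, for every $\|u_0\|\le R$ and every $h\in L^2(0,t;\ell^2)$,
\[
\|u_h(t,0,u_0)-u_*\|\ \le\ R_1\,e^{-\alpha t/2}\ +\ \sqrt{\beta}\,\|h\|_{L^2(0,t;\ell^2)}.
\]
Now choose $t_0=t_0(R,\delta)>0$ large enough that $R_1\,e^{-\alpha t_0/2}<\delta/2$, and set $\eta:=\delta^2/(4\beta)>0$. Then for every $\|u_0\|\le R$ and every $h$ with $\|h\|_{L^2(0,t_0;\ell^2)}^2\le \eta$, the display above gives $\|u_h(t_0,0,u_0)-u_*\|<\delta$, i.e., $u_h(t_0,0,u_0)\in\cala_\delta$.

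Taking the contrapositive: if $u\in C([0,t_0],\ell^2)$ satisfies $u(0)=u_0$ with $\|u_0\|\le R$ and $u(t_0)\notin\cala_\delta$, then for every $h$ with $u=u_h(\cdot,0,u_0)$ we must have $\tfrac{1}{2}\|h\|_{L^2(0,t_0;\ell^2)}^2>\eta/2$. By the definition \eqref{act1} this forces $I_{t_0,u_0}(u)\ge \eta/2>0$, so the infimum in the statement is bounded below by $\eta/2$, which proves the lemma. There is no substantive obstacle beyond assembling these two specializations of \eqref{cest3 2}; the only thing to verify is that $\alpha>0$ in both applications, which is precisely assumption \eqref{lgc}.
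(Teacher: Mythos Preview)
Your proof is correct. Both your argument and the paper's rest on the same two specializations of \eqref{cest3 2}: comparing the controlled solution $u_h$ to the uncontrolled solution $u$ (taking $h_1\equiv 0$, $h_2=h$, same initial datum), and then using the convergence of the uncontrolled flow to $u_*$. The paper runs this by contradiction --- assuming the infimum is zero along a sequence $t_k=k\to\infty$ and invoking the global-attractor property of $\cala$ to reach a contradiction --- whereas you proceed directly, exploiting the explicit exponential rate $e^{-(\lambda-\gamma)t/2}$ from \eqref{cest3 2} (with $h_1=h_2\equiv 0$, $u_{0,2}=u_*$) to choose $t_0$ constructively and obtain an explicit positive lower bound $\eta/2=\delta^2(\lambda-\gamma)/(16L_\sigma^2)$ for the infimum. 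Your route is a bit sharper in that it bypasses the attractor machinery and produces quantitative constants, but the underlying mechanism is the same.
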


 \begin{proof}
 We argue by contradiction.
 If   the statement is false, then
    there exist
   $R>0$ and 
   $\delta>0$ such that for every
   $k\in \N$, there exist
   $u_{0,k} \in \ell^2$
   with $\|u_{0,k}\| \le R$
   and $u_k \in C([0, k], \ell^2)$
   such that  $u_k(0)=u_{0,k}$,
     $u(k) \notin
   \cala_\delta$
   and $I_{k, u_{0,k}}
   (u_k)<{\frac 1k}$.
   Since 
   $I_{k, u_{0,k}}
   (u_k)<{\frac 1k}$, by \eqref{act1} we see that
   there exists
   $h_k\in L^2(0,k;\ell^2)$ such that
\be\label{ldpc2 1}
   \int_0^k
   \| h_k (t)\|^2 dt
   <{\frac 4k},
   \quad
   \text{and} \ \ 
   u_k(\cdot) = u_{h_k}
   (\cdot, 0, u_{0,k}),
\ee
   where 
   $u_{h_k}
   (\cdot, 0, u_{0,k})$ 
   is  the solution 
   of \eqref{ctr1} 
   with initial condition $u_{0,k}$
   and control $h_k$.
    Since $u(k) \notin
   \cala_\delta$,
   by \eqref{ldpc2 1} we get
    \be\label{ldpc2 2}
   u_{h_k} (k, 0, u_{0,k} ) \notin \cala_\delta
   ,
  \quad \forall \ k\in \N.
   \ee

   Let $v(\cdot, 0, u_{0,k)}$ 
   be the solution of  
      \eqref{intr4}  
   with initial condition
   $u_{0,k}$.
   By choosing  
    $h_1 \equiv 0$
   and $h_2=h_k$ 
   in    \eqref{cest3 2}
   we  obtain that for all $k\in \N$,
   $$
   \| v (k, 0, u_{0,k} )
   - u_{h_k} (k, 0, u_{0,k} )\|^2
   \le 
   2(\lambda -\gamma)^{-1}
   L_\sigma^2
   \int_0^k
   \| h_k (t) \|^2 dt,
   $$
   which along with
   \eqref{ldpc2 1} shows that
   \be\label{ldpc2 3}
   \| v (k, 0, u_{0,k} )
   - u_{h_k} (k, 0, u_{0,k} )\|^2
   \le 
   8(\lambda -\gamma)^{-1}
   L_\sigma^2 k^{-1}.
 \ee
 By \eqref{ldpc2 2}-\eqref{ldpc2 3}
 we see that
 there exists $K=K(\delta)>0$ such that 
      \be\label{ldpc2 4}
  v (k, 0, u_{0,k} ) \notin \cala_{\frac 12 \delta},
  \quad \forall \ k\ge K.
   \ee

   Note that  $\cala$ is the
   global attractor of \eqref{intr4}
    and
   $\| u_{0,k}\| \le R$.
   Therefore, there exists
   $K_1=K_1(R, \delta) \ge
   K$ such that  
   $$
    v (k, 0, u_{0,k} ) \in \cala_{\frac 12 \delta},
    \quad
    \forall \  k \ge K_1,
    $$ 
   which is in contradiction with \eqref{ldpc2 4}.
     \end{proof}

 Note that bounded subsets of $\ell^2$ are not precompact
 in general. In order to establish the
 LDP of invariant measures of \eqref{intr3}
 in $\ell^2$, we 
   need to consider the weighted
     space $\ell^2_\kappa$ 
     with  weight function
      $    {\kappa}  (s) =
     \left ( 1 + s^2
     \right )^{\frac 12}$
     for   all $s\in \R$.
       The following function
      $\kappa_\delta$
      for $\delta>0$ 
      is also useful
     in the sequel: 
       $$    {\kappa_\delta}   (s) =
     \left ( 1 + \delta^2 s ^2
     \right )^{\frac 12}
     , \quad \forall
     \  s\in \R .
     $$ 
     The next lemma is concerned 
     with the exponential probability
     estimates in 
     $ \ell^2_{\kappa}
      $.

     \begin{lem}\label{ldpc3}
     	If  
     	\eqref{f1}, \eqref{sig1}-\eqref{sig4}
     	and \eqref{lgc} are fulfilled 
     	and $g\in \ell^2_\kappa$,
     	then  
     there exists $\delta\in (0,1) $ 
     such that for all $\eps\in (0,1]$
     and $u_0 \in \ell^2_{\kappa}  $,
     the solution $u^\eps (\cdot,0,  u_0)$
     of \eqref{intr3} 
     satisfies for all $t\ge 0$,
       $$
   \E \left (  
   e^{ {\frac {\delta}\eps} 
   \|{\kappa_\delta}  u^\eps (t,0, u_0 )\|^2}
   \right ) 
   \le   e^{ - t} 
   e^{ {\frac \delta\eps} 
   \| \kappa _\delta  u_0\|^2 }
   +   {\lambda} 
    e^{    (2+\eps^{-1})\lambda^{-1} }.
    $$
     \end{lem}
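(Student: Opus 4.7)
The plan is to apply It\^{o}'s formula to $\Phi(u) := \exp\!\bigl((\delta/\eps)\|\kappa_\delta u\|^2\bigr)$ and derive a linear differential inequality $\tfrac{d}{dt}\E\Phi(u^\eps(t)) + \E\Phi(u^\eps(t)) \le C_\eps$ whose integrated Gronwall bound gives the desired estimate. First I would compute $dF(u^\eps(t))$ for $F(u) := \|\kappa_\delta u\|^2$ using \eqref{intr3}. The drift is the sum of: (i) the damping $-2\lambda\|\kappa_\delta u^\eps\|^2$; (ii) the Laplacian term $-2(\kappa_\delta^2 u^\eps, Au^\eps) = -2\sum_{j=1}^N(B_j(\kappa_\delta^2 u^\eps), B_j u^\eps)$, estimated via the discrete commutator identity $B_j(\kappa_\delta^2 u)_i = \kappa_\delta^2(|i|)(B_j u)_i + (\kappa_\delta^2(|i+e_j|) - \kappa_\delta^2(|i|))u_{i+e_j}$ together with $|\kappa_\delta^2(|i+e_j|) - \kappa_\delta^2(|i|)| \le \delta^2(2|i|+1) \le 3\delta\kappa_\delta(|i|)$, so that a Young split yields $-2(\kappa_\delta^2 u^\eps, Au^\eps) \le C\delta\|\kappa_\delta u^\eps\|^2$; (iii) the nonlinear term $-2(\kappa_\delta^2 u^\eps, f(u^\eps)) \le 0$ by the sign condition in \eqref{f1}; (iv) the forcing $2(\kappa_\delta^2 u^\eps, g) \le (\lambda/2)\|\kappa_\delta u^\eps\|^2 + (2/\lambda)\|g\|^2_{\ell^2_\kappa}$, using $\kappa_\delta \le \kappa$; and (v) the It\^{o} trace $\eps\|\sigma(u^\eps)\|^2_{\call_2(\ell^2,\ell^2_{\kappa_\delta})} \le \eps L_\sigma^2$ by \eqref{sig4}.

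Next I would apply It\^{o}'s formula to $\Phi(u^\eps)$, using a standard localization argument to control the unbounded exponential before taking expectations. The chain rule gives
\[
d\Phi(u^\eps) = (\delta/\eps)\Phi(u^\eps)\,dF(u^\eps) + \tfrac12(\delta/\eps)^2\Phi(u^\eps)\,d\langle F(u^\eps)\rangle_t + d\mathcal{M}_t,
\]
with $\mathcal{M}$ a local martingale. The quadratic variation is $d\langle F\rangle_t = 4\eps\|\sigma(u^\eps)^*(\kappa_\delta^2 u^\eps)\|^2\,dt$, and using the identity $(\kappa_\delta^2 u,\sigma(u) e_k) = (\kappa_\delta u, \kappa_\delta\sigma(u) e_k)$ together with Cauchy--Schwarz against the orthonormal basis $\{e_k\}$ of $\ell^2$, I obtain $\|\sigma(u)^*(\kappa_\delta^2 u)\|^2 \le \|\sigma(u)\|^2_{\call_2(\ell^2,\ell^2_{\kappa_\delta})}\|\kappa_\delta u\|^2 \le L_\sigma^2 \|\kappa_\delta u\|^2$. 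For $\delta\in(0,1)$ small enough that $C\delta + 2\delta L_\sigma^2 < \lambda/2$, combining everything and taking expectations yields
\[
\tfrac{d}{dt}\E\Phi(u^\eps(t)) \le \tfrac{\delta}{\eps}\,\E\!\Bigl[\Phi(u^\eps)\bigl(-\tfrac{\lambda}{2}\|\kappa_\delta u^\eps\|^2 + \tfrac{2}{\lambda}\|g\|^2_{\ell^2_\kappa} + \eps L_\sigma^2\bigr)\Bigr].
\]

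To close the loop I would use the pointwise inequality $\Phi\|\kappa_\delta u^\eps\|^2 \ge M\Phi - M\exp((\delta/\eps)M)$, valid for any $M > 0$ and obtained by splitting on whether $\|\kappa_\delta u^\eps\|^2$ exceeds $M$. Choosing $M$ proportional to $\eps/(\delta\lambda) + \|g\|^2_{\ell^2_\kappa}/\lambda^2 + \eps/\lambda$ makes the coefficient of $\E\Phi$ equal to $-1$ and leaves an additive remainder of the form $C\exp((\delta/\eps)M)$; further shrinking $\delta$ so that $\delta L_\sigma^2$ and $\delta\|g\|^2_{\ell^2_\kappa}/(\eps\lambda^2)$ fit within the budget of the stated exponent $(2+\eps^{-1})\lambda^{-1}$ produces the precise right-hand side. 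Gronwall's lemma applied to $\tfrac{d}{dt}\E\Phi + \E\Phi \le \lambda\,e^{(2+\eps^{-1})\lambda^{-1}}$ then gives the conclusion.

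The main obstacle is the joint tuning of $\delta$ and $M$: the quadratic-variation feedback contributes $2(\delta^2 L_\sigma^2/\eps)\Phi\|\kappa_\delta u^\eps\|^2$, which, together with the $\delta/\eps$ prefactor from the exponent, must still be strictly dominated by the linear damping $-(\lambda\delta/\eps)\Phi\|\kappa_\delta u^\eps\|^2$; this forces $\delta$ small of order $1/L_\sigma^2$. The weighted Hilbert--Schmidt bound \eqref{sig4} is indispensable, since the bare $\call_2(\ell^2,\ell^2)$ estimate on $\sigma$ would not control $\|\sigma(u)^*(\kappa_\delta^2 u)\|^2$ by $\|\kappa_\delta u\|^2$, so the positive feedback could not be absorbed and the weighted exponential moment would not be finite.
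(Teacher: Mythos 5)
Your proposal follows essentially the same route as the paper: apply It\^{o}'s formula to the exponential of $\|\kappa_\delta u^\eps\|^2$, estimate the discrete commutator $[B_j,\kappa_\delta^2]$ to absorb the Laplacian term into the damping at cost $O(\delta)\|\kappa_\delta u^\eps\|^2$, invoke \eqref{sig4} to control both the weighted It\^{o} trace and the quadratic-variation feedback, and then choose $\delta$ small. The one place you diverge is the final linearization: you split on the event $\{\|\kappa_\delta u^\eps\|^2 \gtrless M\}$ to get $\Phi\|\kappa_\delta u^\eps\|^2 \ge M\Phi - Me^{(\delta/\eps)M}$, whereas the paper works with the time-weighted quantity $\xi(t,s)=e^t e^{(\delta/\eps)s}$ and simply uses the pointwise inequality $e^r(a-r)\le e^a$; the latter is somewhat cleaner for extracting the specific constant $\lambda e^{(2+\eps^{-1})/\lambda}$, since your choice of $M$ leaves a multiplicative prefactor $1+\tfrac{2\delta}{\eps\lambda}\|g\|_\kappa^2+\delta L_\sigma^2$ that has to be re-absorbed into the exponential gap, a step you gesture at but do not quite carry out. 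This is a matter of bookkeeping, not a structural gap; the substance of the proof --- especially the role of \eqref{sig4} in taming the quadratic-variation term, which you correctly flag as the point where the bare $\call_2(\ell^2,\ell^2)$ bound on $\sigma$ would fail --- is the same.
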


 \begin{proof}
    By 
    \eqref{intr3} and
       It\^{o}'s formula we  have
  $$
  d\| {\kappa_\delta}   u^\eps (t)\|^2
  + 2  
  \left ( \lambda \|    {\kappa_\delta}  u^\eps (t) \|^2
  +\sum_{j=1}^N ( B_j
  u^\eps (t),  \  B_j (\kappa_\delta ^2 u^\eps (t) ) )
     + 2    \left (  \kappa_\delta   f
  ( u^\eps (t)),   \kappa_\delta u^\eps (t) \right )
  \right ) dt
  $$
   \be\label{ldpc3 1}
  = 
    2   ( \kappa_\delta  g,  \kappa_\delta   u^\eps (t)) dt
 +  \eps 
  \|   \kappa_\delta   \sigma (u^\eps (t)) \|^2_{\call_2(\ell^2,
 \ell^2)}   dt
 + 2 \sqrt{\eps}
 ( \kappa_\delta  ^2 u^\eps (t),
 \sigma (u^\eps (t)) dW)  .
\ee
Let $\delta\in (0,1)$ be a number to be determined
and   $\xi (t,s)
   = e^{ t}
   e^{{\frac \delta{\eps}} s}$.
   Then by 
   \eqref{f1}  and
   \eqref{ldpc3 1} we  get
   $$
   d\xi (t, \|{\kappa_\delta} u^\eps (t)\|^2)
     \le
   \xi (t, \|{\kappa_\delta} u^\eps (t)\|^2)
   \left (
   1 +{\frac {2\delta^2}\eps}\|{\kappa_\delta} 
   \sigma(u^\eps (t))\|^2_{\call_2
   	(\ell^2,\ell^2)} \|{\kappa_\delta} u^\eps (t) \|^2
   	\right )
   	$$
   	$$
   +
    \xi (t, \|{\kappa_\delta} u^\eps (t)\|^2)
   \left (\delta \|{\kappa_\delta} 
   \sigma(u^\eps (t)) \|^2_{\call_2(\ell^2,\ell^2)}
   +{\frac {2\delta}\eps}
   (\kappa_\delta g,  \kappa_\delta u^\eps (t) )
   \right )dt
   $$
   $$
   -{\frac {2\delta}\eps}
  \xi    (t, \|{\kappa_\delta}
  u^\eps (t)\|^2)
   \left (
   \lambda \|{\kappa_\delta} u^\eps (t) \|^2
   +\sum_{j=1}^N ( B_j
  u^\eps (t),  \  B_j (\kappa_\delta ^2 u^\eps (t) ) )
   \right )  dt
   $$
   \be\label{ldpc3 2}
    +{\frac {2\delta}
   	{\sqrt{\eps}}}  \xi (t, \|
   {\kappa_\delta}  u^\eps (t)\|^2)
   ({\kappa_\delta} u^\eps (t),
   {\kappa_\delta}  \sigma(u^\eps (t)) dW).
   \ee
   Next, we estimate the terms in \eqref{ldpc3 2}.
 Note that
 $$-
 \sum_{j=1}^N ( B_j
  u^\eps (t),  \  B_j (\kappa_\delta ^2 u^\eps (t) ) )
 =-\sum_{j=1}^N
 \sum_{i\in \Z}
 (B_j
  u^\eps (t))_i \ 
  (B_j (\kappa_\delta ^2 u^\eps (t) ))_i
  $$
  \begin{align*}
  = &-\sum_{j=1}^N
 \sum_{i\in \Z}  
 \left (
 (u^\eps (t))_{(i_1,\ldots, i_j +1, \ldots, i_N)}
 -
 (u^\eps (t))_{(i_1,\ldots, i_j , \ldots, i_N)}
 \right ) \\
 & \times
 \left (
 (\kappa_\delta ^2u^\eps (t))_{(i_1,\ldots, i_j +1, \ldots, i_N)}
 -
 (\kappa_\delta ^2 u^\eps (t))_{(i_1,\ldots, i_j , \ldots, i_N)}
 \right ) \\ 
  = & 
  -\sum_{j=1}^N
 \sum_{i\in \Z}  
 \left (
 (u^\eps (t))_{(i_1,\ldots, i_j +1, \ldots, i_N)}
 -
 (u^\eps (t))_{(i_1,\ldots, i_j , \ldots, i_N)}
 \right ) \\
 & \times
 \left (
 (1+\delta^2 (|i|^2 +2i_j +1) )
 (u^\eps (t))_{(i_1,\ldots, i_j +1, \ldots, i_N)}
 -
  (1+\delta^2 |i|^2   )
  ( u^\eps (t))_{(i_1,\ldots, i_j , \ldots, i_N)}
 \right ) \\
 = & 
  - \delta^2 \sum_{j=1}^N
 \sum_{i\in \Z} 
 (2i_j +1)   
 (u^\eps (t))_{(i_1,\ldots, i_j +1, \ldots, i_N)}
   (
  (u^\eps (t))_{(i_1,\ldots, i_j +1, \ldots, i_N)}
 -
 (u^\eps (t))_{(i_1,\ldots, i_j , \ldots, i_N)}
 ) 
   \end{align*}
\be\label{ldpc3 3}
 -
 \sum_{j=1}^N
 \sum_{i\in \Z} 
  (1+\delta^2  |i|^2  )
 ( (u^\eps (t))_{(i_1,\ldots, i_j +1, \ldots, i_N)}
 - 
  ( u^\eps (t))_{(i_1,\ldots, i_j , \ldots, i_N)}
 )^2.
 \ee
 
   For the first term
   on the right-hand side of \eqref{ldpc3 3}   we have
      $$  - \delta^2 \sum_{j=1}^N
 \sum_{i\in \Z} 
 (2i_j +1)   
 (u^\eps (t))_{(i_1,\ldots, i_j +1, \ldots, i_N)}
   (
  (u^\eps (t))_{(i_1,\ldots, i_j +1, \ldots, i_N)}
 -
 (u^\eps (t))_{(i_1,\ldots, i_j , \ldots, i_N)}
 ) 
 $$
 $$
 \le
  \delta^2 \sum_{j=1}^N
 \sum_{i\in \Z} 
 2 |i_j |   \left |
 (u^\eps (t))_{(i_1,\ldots, i_j +1, \ldots, i_N)}
   (
  (u^\eps (t))_{(i_1,\ldots, i_j +1, \ldots, i_N)}
 -
 (u^\eps (t))_{(i_1,\ldots, i_j , \ldots, i_N)}
 )  \right |
 $$
 $$
 +
   \delta^2 \sum_{j=1}^N
 \sum_{i\in \Z} 
 \left |
 (u^\eps (t))_{(i_1,\ldots, i_j +1, \ldots, i_N)}
   (
  (u^\eps (t))_{(i_1,\ldots, i_j +1, \ldots, i_N)}
 -
 (u^\eps (t))_{(i_1,\ldots, i_j , \ldots, i_N)}
 ) 
 \right |
 $$
 $$
 \le
  \delta^2 \sum_{j=1}^N
 \sum_{i\in \Z} 
   |i_j  |^2  
   (
  (u^\eps (t))_{(i_1,\ldots, i_j +1, \ldots, i_N)}
 -
 (u^\eps (t))_{(i_1,\ldots, i_j , \ldots, i_N)}
 )^2 
 $$
 $$
 +
   \delta^2 \sum_{j=1}^N
 \sum_{i\in \Z} 
 \left ({\frac 52}
  |
 ( u^\eps (t))_{(i_1,\ldots, i_j +1, \ldots, i_N)}|^2
  +  {\frac 12}  |
 (u^\eps (t))_{(i_1,\ldots, i_j , \ldots, i_N)}
   |^2 
 \right  )
 $$
 $$
 \le
  \delta^2 \sum_{j=1}^N
 \sum_{i\in \Z} 
   |i  |^2  
   (
  (u^\eps (t))_{(i_1,\ldots, i_j +1, \ldots, i_N)}
 -
 (u^\eps (t))_{(i_1,\ldots, i_j , \ldots, i_N)}
 )^2  
 +3N
   \delta^2  \| u^\eps (t)\|^2
 $$
 $$
 \le
  \delta^2 \sum_{j=1}^N
 \sum_{i\in \Z} 
   |i  |^2  
   (
  (u^\eps (t))_{(i_1,\ldots, i_j +1, \ldots, i_N)}
 -
 (u^\eps (t))_{(i_1,\ldots, i_j , \ldots, i_N)}
 )^2  
 +3N
   \delta^2  \| \kappa_\delta u^\eps (t)\|^2,
 $$
 which along with
  \eqref{ldpc3 3} shows that
 \be\label{ldpc3 4}
 -
 \sum_{j=1}^N ( B_j
  u^\eps (t),  \  B_j (\kappa_\delta ^2 u^\eps (t) ) )
  \le
  3N
   \delta^2  \| \kappa_\delta u^\eps (t)\|^2.
 \ee

 On the other hand, by \eqref{sig4} we have
 for all $\delta \in (0,1)$,
  \be\label{ldpc3 5}
 \|{\kappa_\delta} 
   \sigma(u^\eps (t))\|^2_{\call_2
   	(\ell^2,\ell^2)}
   	\le 
   	\| (1+ |\cdot|^2)^{\frac 12}
   \sigma(u^\eps (t))\|^2_{\call_2
   	(\ell^2,\ell^2)}
   	\le L_\sigma^2.
\ee
By Young's inequality we have
for all $\delta\in (0,1)$,
 \be\label{ldpc3 6}
 2 (\kappa_\delta g,
  \kappa_\delta u^\eps (t) )
  \le
  {\frac 12} \lambda \| \kappa_\delta u^\eps (t)\|^2
  + 2\lambda^{-1} 
 \| \kappa_\delta g\|^2
  \le
  {\frac 12} \lambda \| \kappa_\delta u^\eps (t)\|^2
  + 2\lambda^{-1} 
 \| g\|_\kappa^2.
 \ee

 It follows from \eqref{ldpc3 2},
 \eqref{ldpc3 4}-\eqref{ldpc3 6}
 that
  $$
   d\xi (t, \|{\kappa_\delta} u^\eps (t)\|^2)
     \le
   \xi (t, \|{\kappa_\delta} u^\eps (t)\|^2)
   \left (
   1 + 
   \delta L_\sigma^2 
   +{\frac {2\delta}{\eps \lambda}}
    \|g\|^2_\kappa
   	\right )dt
   	$$
    $$
   -{\frac {\delta}\eps}
  \xi    (t, \|{\kappa_\delta}
  u^\eps (t)\|^2)
     \left (2
   \lambda 
   -2\delta L_\sigma^2
   -{\frac 12}\lambda 
   - 6N \delta^2 \right )
   \|{\kappa_\delta} u^\eps (t) \|^2
     dt
   $$
   \be\label{ldpc3 7}
    +{\frac {2\delta}
   	{\sqrt{\eps}}}  \xi (t, \|
   {\kappa_\delta}  u^\eps (t)\|^2)
   ({\kappa_\delta} u^\eps (t),
   {\kappa_\delta}  \sigma(u^\eps (t)) dW).
   \ee
Fix a positive number   $\delta$ such that
 $$
 0<\delta <\min \{1, {\frac  {\lambda}{1+ 2\|g\|^2_\kappa}} ,
 L^{-2}_\sigma
 \}
 \quad \text{and} \ \ 
 (L^2_\sigma +3N \delta) \delta
 <{\frac 14} \lambda.
 $$
 Then by \eqref{ldpc3 7} we get
  $$
   d\xi (t, \|{\kappa_\delta} u^\eps (t)\|^2)
     \le
   \lambda \xi (t, \|{\kappa_\delta} u^\eps (t)\|^2)
   \left ((2+\eps^{-1})\lambda^{-1}  -{\frac \delta{\eps}}  
   \|{\kappa_\delta} u^\eps (t) \|^2
   \right )
     dt
   $$
   \be\label{ldpc3 8}
    +{\frac {2\delta}
   	{\sqrt{\eps}}}  \xi (t, \|
   {\kappa_\delta}  u^\eps (t)\|^2)
   ({\kappa_\delta} u^\eps (t),
   {\kappa_\delta}  \sigma(u^\eps (t)) dW).
   \ee
 Using a stopping time if necessary,
 by   \eqref{ldpc3 8}   we get
   $$
   \E \left ( e^{  t}
   e^{ {\frac \delta\eps} 
   \|{\kappa_\delta}  u^\eps (t)\|^2}
   \right )
   $$
   \be\label{ldpc3 9}
   \le  
   \E \left (
   e^{ {\frac \delta\eps} 
   \|{\kappa_\delta}  u^\eps (0)\|^2}
   \right )
   +   {\lambda} 
   \E \left (\int_0^t
    e^{ 
    	 s}
   e^{ {\frac \delta\eps} 
   \|{\kappa_\delta}  u^\eps (s)\|^2}
   \left ( (2+\eps^{-1})\lambda^{-1}
   - 
    {\frac {\delta}\eps}  
     \|{\kappa_\delta} u^\eps (s) \|^2 \right ) ds
     \right ).
     \ee
     Note that $e^r (a-r) \le e^{a-1}
     \le e^a$ for all $a, r\in \R$.
     We  obtain from \eqref{ldpc3 9}
     that
      $$
   \E \left ( e^{  t}
   e^{ {\frac \delta\eps} 
   \|{\kappa_\delta}  u^\eps (t)\|^2}
   \right ) 
   \le  
   \E \left (
   e^{ {\frac \delta\eps} 
   \|{\kappa_\delta}  u^\eps (0)\|^2}
   \right )
   +   {\lambda} 
    e^{ t+  (2+\eps^{-1})\lambda^{-1} },
    $$
    which shows that for all $t\ge 0$,
     $$
   \E \left (  
   e^{ {\frac \delta\eps} 
   \|{\kappa_\delta}  u^\eps (t)\|^2}
   \right ) 
   \le   e^{ - t} 
   e^{ {\frac \delta\eps} 
   \|{\kappa_\delta}  u^\eps (0)\|^2}
   +   {\lambda} 
    e^{    (2+\eps^{-1})\lambda^{-1} }.
    $$
    This completes the proof. 
   \end{proof}

   If \eqref{lgc} holds,
   then for any $\eps <(\lambda -\gamma)
   L^{-2}_\sigma$, by \eqref{f1}
   and \eqref{sig2} we  obtain from
   \eqref{intr3} that for all 
   $u_{0,1}, u_{0,2}
   \in \ell^2$  and $t\ge 0$,
   $$
   \E \left (
   \| u^\eps (t,0, u_{0,1})
   - u^\eps (t, 0, u_{0,2})
   \|^2
   \right )
   \le e^{-(\lambda -\gamma) t}
   \| u_{0,1} -u_{0,2} \|^2,
   $$
  and hence 
   the invariant measure 
   $\mu^\eps$
   of \eqref{intr3}
   is  unique in this case,
   which  
   is  the weak limit of
   the sequence
   ${\frac 1k}
   \int_0^k
  \call (u^\eps (t,0, 0)) dt$
  as $k\to \infty$,
  where $u^\eps (\cdot, 0,0)$ is the
  solution of \eqref{intr3}
  with initial value $0$ at initial time $0$,
  and $\call (u^\eps (t,0, 0))$ is the distribution
  of  $ u^\eps (t,0, 0)$.
  Then by 
     Lemma 
  \ref{ldpc3} with $u_0=0$ we infer that 
  the invariant measure 
  of \eqref{intr3}
  is  supported on $\ell^2_\kappa$.

  \begin{lem}\label{ldpc4}
  If \eqref{f1},
  \eqref{sig1}-\eqref{sig4}
  and \eqref{lgc} are fulfilled
  and $g\in \ell^2_\kappa$,
  then    
  	$$
  	\lim_{R\to \infty}
  	\limsup_{\eps \to 0}
  	\eps  \ln \mu^\eps (\ell^2 \setminus {\overline{B}}
  	_{\ell^2_\kappa}
  	(0, R) )  
  	=-\infty,
  	$$
  	where
  	${\overline{B}}
  	_{\ell^2_\kappa}
  	(0, R) ) $
  	is the closed ball in $ \ell^2_\kappa$
  	centered at $0$ and of radius $R$.
  \end{lem}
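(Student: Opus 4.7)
The plan is to combine the exponential moment bound of Lemma \ref{ldpc3} with an exponential Chebyshev inequality, applied to the unique invariant measure $\mu^\eps$ via its Krylov--Bogolyubov representation as a weak limit of Cesàro averages. Since \eqref{lgc} holds, the discussion preceding the statement shows that for $\eps$ small, $\mu^\eps$ is the weak limit in $\ell^2$ of $\frac{1}{k}\int_0^k \call(u^\eps(t,0,0))\,dt$ as $k\to\infty$. The map $u\mapsto \|u\|_{\ell^2_\kappa}\in[0,\infty]$ is lower semicontinuous on $\ell^2$ (it is the supremum of the finite partial sums $\sum_{|i|\le n}(1+|i|^2)|u_i|^2$), so $\ell^2\setminus \overline B_{\ell^2_\kappa}(0,R)=\{u\in\ell^2:\|u\|_{\ell^2_\kappa}>R\}$ is open in $\ell^2$. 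By the Portmanteau theorem,
$$
\mu^\eps\!\bigl(\ell^2\setminus \overline B_{\ell^2_\kappa}(0,R)\bigr)\le
\liminf_{k\to\infty}\frac{1}{k}\int_0^k P\!\bigl(\|u^\eps(t,0,0)\|_{\ell^2_\kappa}>R\bigr)\,dt.
$$

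Next I bridge the gap between the norm $\|\cdot\|_{\ell^2_\kappa}$ in the statement and the norm $\|\kappa_\delta\cdot\|$ appearing in Lemma \ref{ldpc3}. For any $\delta\in(0,1]$ and $u\in\ell^2$,
$$
\|\kappa_\delta u\|^2=\sum_{i\in\Z}(1+\delta^2|i|^2)|u_i|^2\ge \delta^2\sum_{i\in\Z}(1+|i|^2)|u_i|^2=\delta^2\|u\|_{\ell^2_\kappa}^2.
$$
Fix the $\delta\in(0,1)$ supplied by Lemma \ref{ldpc3}. Then $\{\|u\|_{\ell^2_\kappa}>R\}\subseteq\{\|\kappa_\delta u\|^2>\delta^2 R^2\}$ and the exponential Chebyshev inequality gives
$$
P\!\bigl(\|u^\eps(t,0,0)\|_{\ell^2_\kappa}>R\bigr)
\le e^{-\delta^3 R^2/\eps}\,\E\!\left(e^{\frac{\delta}{\eps}\|\kappa_\delta u^\eps(t,0,0)\|^2}\right).
$$
Applying Lemma \ref{ldpc3} with $u_0=0$ yields
$$
\E\!\left(e^{\frac{\delta}{\eps}\|\kappa_\delta u^\eps(t,0,0)\|^2}\right)
\le e^{-t}+\lambda\,e^{(2+\eps^{-1})/\lambda}.
$$

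Combining these estimates and time-averaging on $[0,k]$, the $e^{-t}$ contribution vanishes in the limit $k\to\infty$, so
$$
\mu^\eps\!\bigl(\ell^2\setminus \overline B_{\ell^2_\kappa}(0,R)\bigr)
\le \lambda\,e^{-\delta^3 R^2/\eps}\,e^{(2+\eps^{-1})/\lambda}.
$$
Taking $\eps\ln$ of both sides gives
$$
\eps\ln\mu^\eps\!\bigl(\ell^2\setminus \overline B_{\ell^2_\kappa}(0,R)\bigr)
\le -\delta^3 R^2+\eps\ln\lambda+\frac{2\eps}{\lambda}+\frac{1}{\lambda},
$$
so $\limsup_{\eps\to 0}\eps\ln\mu^\eps(\ell^2\setminus\overline B_{\ell^2_\kappa}(0,R))\le -\delta^3 R^2+1/\lambda$, and letting $R\to\infty$ yields $-\infty$.

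The only real obstacle is producing a meaningful exponential Chebyshev estimate in the $\ell^2_\kappa$ norm from the bound in the $\kappa_\delta$ norm, which is settled by the elementary pointwise inequality $\|\kappa_\delta u\|^2\ge \delta^2\|u\|_{\ell^2_\kappa}^2$ valid for $\delta\le 1$; note also that the $\eps^{-1}/\lambda$ term inside the exponential of Lemma \ref{ldpc3}, after multiplication by $\eps$, contributes only the harmless constant $1/\lambda$ which is dominated by $-\delta^3 R^2$ as $R\to\infty$.
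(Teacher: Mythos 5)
Your proof is correct, and it reaches the same final bound $\mu^\eps(\ell^2\setminus\overline{B}_{\ell^2_\kappa}(0,R))\le \lambda e^{(2+\eps^{-1})/\lambda}e^{-\delta^3 R^2/\eps}$ as the paper, but via a genuinely different route. The common core is identical in both arguments: the bridging inequality $\|\kappa_\delta u\|^2\ge \delta^2\|u\|_{\ell^2_\kappa}^2$ (valid since $\delta\le 1$), the exponential Chebyshev bound, and Lemma \ref{ldpc3}. Where you diverge is in how you transfer the pointwise-in-$t$ estimate to the measure $\mu^\eps$. The paper uses the invariance of $\mu^\eps$ directly, writing $\mu^\eps(\ell^2\setminus\overline{B}_{\ell^2_\kappa}(0,R))=\int_{\ell^2}P(u^\eps(t,0,u_0)\notin\overline{B}_{\ell^2_\kappa}(0,R))\,d\mu^\eps(u_0)$ for every fixed $t$, then restricting the integral to $\ell^2_\kappa$ (which requires a preliminary observation that $\mu^\eps$ is supported on $\ell^2_\kappa$, justified just before the lemma via Lemma \ref{ldpc3} with $u_0=0$), and finally sending $t\to\infty$ via reverse Fatou to kill the $e^{-t}e^{\delta\|\kappa_\delta u_0\|^2/\eps}$ term. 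You instead invoke the Krylov--Bogolyubov representation of $\mu^\eps$ as the weak limit of Ces\`aro averages started from $u_0=0$, combine lower semicontinuity of $\|\cdot\|_{\ell^2_\kappa}$ with the Portmanteau theorem to pass the open set $\{\|u\|_{\ell^2_\kappa}>R\}$ to the limit, and then kill the transient by time-averaging the bound $e^{-t}+\lambda e^{(2+\eps^{-1})/\lambda}$ over $[0,k]$. Your route has the modest advantage of sidestepping the support argument for $\mu^\eps$ entirely, since starting from $u_0=0$ makes the transient $e^{-t}e^{\delta\|\kappa_\delta u_0\|^2/\eps}$ automatically finite; the paper's route is slightly more direct once the support fact is in hand. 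Both are sound, and both hinge on the same uniqueness/representation discussion for $\mu^\eps$ that the paper supplies under \eqref{lgc}.
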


    \begin{proof}
      Let $\delta\in (0,1)$ be 
      the small number as
      given by Lemma \ref{ldpc3}.
       Given  $u_0 \in  \ell^2 $ 
       and $t>0$, we  have
     $$
     P \left (
     u^\eps (t,0,
     u_0) \notin    {\overline{B}}
  	_{\ell^2_\kappa}
  	(0, R ) 
      \right ) 
      =
       P \left (
     \|u^\eps (t,0, u_0)  \|^2_{\ell^2_{\kappa}
     } >  R^2
     \right ) 
     $$
     $$
      \le
      P \left (
     \|\kappa_\delta  u^\eps (t, 0, u_0)  \|^2 
     >  R^2\delta^2
     \right ) 
       =
      P \left (
    e^{ 
     {\frac \delta\eps} \| \kappa_\delta  u^\eps (t,0,
      u_0 ) \|^2  }
       >
     e^{
     {\frac  {R ^2  \delta^3}\eps} 
     }
     \right  )
       $$
   \be\label{ldpc4 1}
       \le
       e^{-
     {\frac  {R ^2  \delta^3}\eps} 
     }
     \E \left (
      e^{ 
     {\frac \delta\eps} \|\kappa_\delta 
      u^\eps (t, 0, u_0 ) \|^2 }
     \right ).
    \ee
      By Lemma \ref{ldpc3} and \eqref{ldpc4 1}
      we get 
           \be\label{ldpc4 2}
     P \left (
     u^\eps (t,0,
     u_0) \notin    {\overline{B}}
  	_{\ell^2_\kappa}
  	(0, R ) 
      \right ) 
     \le
        e^{ - t}   e^{-
     {\frac  {R ^2  \delta^3}\eps} 
     } 
   e^{ {\frac \delta\eps} 
   \| \kappa _\delta  u_0\|^2 }
   +   {\lambda} 
    e^{    (2+\eps^{-1})\lambda^{-1} }   e^{-
     {\frac  {R ^2  \delta^3}\eps} 
     }.
\ee
      It follows from \eqref{ldpc4 2} and 
        the invariance of $\mu^\eps$  
        that 
     for all $t\ge 0$,
     $$
     \mu^\eps ( (\ell^2 \setminus {\overline{B}}
  	_{\ell^2_\kappa}
  	(0, R) )    )
     =
     \int_{\ell^2}
       P \left (
     u^\eps (t,0,
     u_0) \notin    {\overline{B}}
  	_{\ell^2_\kappa}
  	(0, R ) 
      \right ) 
      d\mu^\eps (u_0)
     $$
               \be\label{ldpc4 3}
      =
     \int_{\ell^2_\kappa}
       P \left (
     u^\eps (t,0,
     u_0) \notin    {\overline{B}}
  	_{\ell^2_\kappa}
  	(0, R ) 
      \right ) 
      d\mu^\eps (u_0),
    \ee
    The last equality follows from the fact
    that
    $\mu^\eps$ is supported on $\ell^2_\kappa$.
      By  \eqref{ldpc4 2}-\eqref{ldpc4 3}
      and   the Fatou theorem we get
       $$ 
     \mu^\eps ( (\ell^2 \setminus {\overline{B}}
  	_{\ell^2_\kappa}
  	(0, R) )    )
  	\le
  	\int_{\ell^2_\kappa}
       \limsup_{t \to \infty}
       P \left (
     u^\eps (t,0,
     u_0) \notin    {\overline{B}}
  	_{\ell^2_\kappa}
  	(0, R ) 
      \right ) 
      d\mu^\eps (u_0) 
      $$
        \be\label{ldpc4 4}
    \le  
      {\lambda} 
    e^{    (2+\eps^{-1})\lambda^{-1} }   e^{-
     {\frac  {R ^2  \delta^3}\eps} 
     }.
     \ee 
     Then the desired limit follows 
     from \eqref{ldpc4 4} immediately.
     \end{proof}

    In the next two sections,
    we prove the LDP   
      of
      the family $\{\mu^\eps\}$
      of  invariant measures
      of \eqref{intr3} as $\eps \to 0$,
      based on the results obtained
      in the previous sections.

\section{LDP lower bound of invariant measures} 
\setcounter{equation}{0}

This section is devoted to the
      LDP lower bound
      of  the family  $\{\mu^\eps\}$
      of
  invariant measures of \eqref{intr3} as
  $\eps\to 0$. The argument is standard
  (see, e.g., \cite{mar2, sow1}).
  We here sketch the idea of the proof 
  only for the sake
  of reader's convenience.

  We will show that 
  for any  $z\in \ell^2$,
  $s_1>0$
  and $s_2>0$, there exists
  $\eps_0$ such that
  \be\label{ldplb a1}
  \mu^\eps
  (B_{\ell^2}  (z, s_1))
  \ge e^{
  - {\frac {J (z) +s_2}
  {\eps}
  }
  },
  \quad \forall \    \eps \le \eps_0,
  \ee
  where $B_{\ell^2} (z,s_1)$
  is the open ball in $\ell^2$
  centered at $z$ and of radius $s_1$.
  Without loss of generality, we may assume that
   $J(z) <\infty$. In this case,  by \eqref{ratea}
   we see that
   there exist $r_0>0$
   and $\widetilde{u}
   \in C([0, r_0], \ell^2)$
   such that
   $\widetilde{u} (0) =u_*$,
    $\widetilde{u} (r_0)
   \in B_{\ell^2} (z, {\frac 14} s_1)$,
   and $I_{r_0, u_*} (\widetilde{u})
   < J(z) +
     {\frac 16} s_2.
     $ 
     By \eqref{act1} we infer that
     there exists
   $h\in L^2(0,r_0; \ell^2)$  such that
     \be\label{ldplb a2}
     {\frac 12}
     \int_0^{r_0} \| h(t) \|^2
     dt < 
     I_{r_0, u_*} (\widetilde{u})
      +
     {\frac 16} s_2,
     \quad  
     \text{and}
     \quad 
       \widetilde{u} (\cdot)
       =u_h(\cdot, 0, u_*),
     \ee
    where $u_h(\cdot, 0, u_*)$
    is the solution of \eqref{ctr1}
     with $u_0=u_*$.
     By  \eqref{ldplb a2}
     we get
      \be\label{ldplb a3}
     {\frac 12}
     \int_0^{r_0} \| h(t) \|^2
     dt <  J(z)
      +
     {\frac 13} s_2,
     \quad  
     \text{and}
     \quad 
      \| u_h(r_0, 0, u_*)-  z \|
     <{\frac 14} s_1.
     \ee
    By \eqref{cest3 2} 
    and \eqref{ldplb a3} we find   
    that for any $y\in \ell^2$,
    $$
     \| u_h(r_0,  0, y) -u_h(r_0, 0, u_*)\|^2
  \le
  e^{2(\lambda -\gamma)^{-1}
  L^2_\sigma \int_0^{r_0}
  \|h(t)\|^2 dt }\| y-u_*\|^2
   $$
   $$
   \le
  e^{4(\lambda -\gamma)^{-1}
  L^2_\sigma   (J(z) +{\frac 13 s_2}) }\|y-u_*\|^2,
   $$
  and hence there exists
  $\eta=\eta(z, s_1, s_2)>0$ such that
  for all $\|y-u_*\|<\eta$, 
 \be\label{ldplb a4}
     \| u_h(r_0,  0, y) -u_h(r_0, 0, u_*)\|
<
  {\frac 14} s_1.
 \ee
 By \eqref{ldplb a3}-\eqref{ldplb a4}
 we have 
$$
    \| u_h (r_0, 0, y  ) -z\|<{\frac 12}
   s_1,
    \quad \text{for all }
     y\in  \ell^2
    \ \text{with}
    \ 
    \| y-u_*\|
    <\eta,
    $$
    which implies that if 
   $\|y-u_*\|<\eta$, then
   $$
   \| u^\eps (r_0, 0, y   )- z\|
   < \|u^\eps (r_0, 0, y  )
   -u_h(r_0,0, y )\|
    +{\frac 12} s_1,
   $$  
   and hence
  \be\label{ldplb a5}
   P
    \left (
     \| u^\eps (r_0,0, y)- u_h(r_0,0,y) \|<
     {\frac 12}
     s_1
    \right )
    \le
    P
    \left (
     \| u^\eps (r_0,0, y)- z\|<s_1
    \right ).
    \ee
    By
        \eqref{uldp 1} and
     \eqref{ldplb a3}
     we infer that  
    \be\label{ldplb a6}
     P
    \left ( 
     \| u^\eps (\cdot, 0, y )- u_h(\cdot,0, y  ) \|
     _{C([0,r_0], \ell^2)}<
     {\frac 12}
     s_1
    \right )
    \ge e^{
    -{\frac {I_{r_0, y} (u_h(\cdot, 0,y  )) +{\frac 13} s_2}
    {\eps}
    }
    }
      \ge e^{
    -{\frac {
    J(z)  +{\frac 23} s_2}
    {\eps}
    }
    }.
    \ee
By \eqref{ldplb a5}-\eqref{ldplb a6}
 we  see that if $\| y-u_*\|<
 \eta  $, then
   \be\label{ldplb a7}
    P
    \left (
     \| u^\eps (r_0, 0, y) -z\|<s_1 
    \right )
    \ge 
    e^{\frac {s_2}{3\eps}}
       e^{
    -{\frac {
    J(z)  +  s_2}
    {\eps}
    }
    }.
    \ee
   By  \eqref{ldplb a7}
   and the invariance of 
     $\mu^\eps$  we get
     $$
    \mu^\eps
    (B_{\ell^2} (z, s_1))
    =
    \int_{\ell^2}
    P (u^\eps (r_0,0, y) \in 
    B_{\ell^2} (z, s_1)) \mu^\eps
    (d y)
    $$
\be\label{ldplb a8}
    \ge
     \int_{B_{\ell^2}
     (u_*, \eta)  }
    P (u^\eps (r_0,0, y) \in 
    B_{\ell^2} (z, s_1)) \mu^\eps
    (d y)
    \ge
    \mu^\eps
    (B_{\ell^2}(u_*, 
     \eta ) )
     e^{\frac {s_2}{3\eps}}
       e^{
    -{\frac {
    J(z)  +  s_2}
    {\eps}
    }
    }.
   \ee
    Since $\cala =\{u_*\}$, by
   Theorem \ref{limi} we have
     $\mu^\eps \to \delta_{u_*}$ weakly,  
     and thus, 
$$
\liminf_{\eps
\to 0}    \left (   e^{\frac {s_2}{3\eps}}
  \mu^\eps
    (B_{\ell^2}(u_*, 
     \eta ) )
     \right )
     \ge
     \lim _{\eps
\to 0}      e^{\frac {s_2}{3\eps}}
  = \infty,  
    $$ 
    which along with \eqref{ldplb a8}
    yields \eqref{ldplb a1}, and thus completes
    the proof of the LDP lower bound of $\{\mu^\eps\}$.

\section{LDP upper bound
of invariant measures} 
\setcounter{equation}{0}

This section is devoted to the 
LDP upper bound   
of the family $\{\mu^\eps \}$
  of invariant measures of
\eqref{intr3} as $\eps \to 0$.
Again,
 the argument follows from 
  \cite{mar2, sow1}, and we just
   sketch the proof 
   for  reader's convenience.

We will show that  
 for  any    $s_1>0 $,
$ s_2 >0$  and
$s>0$,  
there exists
$\eps_0>0$ such that
 \be\label{ldpub a1}
\mu^\eps (\ell^2 \setminus
J^s_{s_1})
\le e^{
-{\frac {s-s_2}
{\eps}
}
},
\quad \forall \ \eps  \le \eps_0,
\ee
where $J^s_{s_1}$
is the   $s_1$-neighborhood
of $J^s$.

By Lemma \ref{ldpc1},  there exists
$\eta>0$ such that
for all $t>0$,
\be\label{ldpub a2}
\left \{
u(t):\ 
u\in C([0,t], \ell^2), 
\ u(0)\in  \cala_\eta,
\ I_{t, u(0)} (u)
\le s-  {\frac 14} s_2
\right \}
\subseteq J^s_{  {\frac 12} s_1 }.
\ee
 By Lemma \ref{ldpc4}, 
  there
exist
$R=R(s)  >0$
and $\eps_1=\eps_1 (s) >0$
such that for all
$\eps  \le \eps_1$,
\be\label{ldpub a3}
\mu^\eps 
\left (\ell^2 \setminus  {\overline{B}}_{\ell^2_\kappa}
(0, R)  \right )
\le e^{-{\frac s \eps}}.
\ee  
By Lemma \ref{ldpc2}, 
 we infer that there exists $t_0=t_0 (R, \eta)>0$ such that
\be\label{ldpub a4}
\delta
= \inf \left \{
I_{t_0, u_0} (u):\
 \| u_0 \| \le R,   
\ u\in C([0,t_0], \ell^2),
\   u(0) =u_0, 
\ u(t_0) \notin \cala_\eta
\right \}>0.
\ee
For every  $n\in \N$,
denote by 
\be\label{ldpub a5}
\caly_n
=\left \{
u\in C([0, n t_0], \ell^2):
\ \|u(0)\| \le R  ,
\  u(kt_0)\notin  \cala_\eta, \ 
 \|u(kt_0) \| \le R, \ \forall \ k=1,\ldots, n
\right \}.
\ee  
By \eqref{ldpub a4}
and \eqref{ldpub a5}
one can verify that   
if $n> {\frac s{\delta}}  $, then
\be\label{ldpub a6}
\inf
\left \{
I_{nt_0, u_0} (u): \   \| u_0\| \le R,
   \  u\in \caly_n, \  u(0)=u_0
\right \} >s .
\ee

  Let   
$n_0  $ be a fixed positive integer  such that
$n_0 > {\frac s\delta}$, which along with 
  \eqref{ldpub a6}
implies that
\be\label{ldpub a10} 
\inf
\left \{
I_{n_0 t_0, u_0} (u): \   \| u_0\| \le R,
   \  u\in \caly_{n_0}, \ u(0)=u_0
\right \} > s.
\ee
Note that if $ \| u_0\|_{\ell^2_\kappa}
\le R$, then $\|u_0\| \le R$, and hence
by \eqref{ldpub a10} we have
  \be\label{ldpub a10a} 
\inf
\left \{
I_{n_0 t_0, u_0} (u): \   \| u_0\|_{\ell^2_\kappa} \le R,
   \  u\in \caly_{n_0}, \ u(0)=u_0
\right \} > s.
\ee
It is easy to verify that
${\overline{B}}_{\ell^2_\kappa} (0, R)$
is  a compact
subset of $\ell^2$. Then
applying  Proposition \ref{uldp_DZ}
to  the compact set ${\overline{B}}_{\ell^2_\kappa} (0, R)$
and the closed subset
  $\caly_{n_0}$  of 
 $C([0, n_0 t_0], \ell^2)$, 
by \eqref{ldpub a10a} 
     we 
infer that there exists $\eps_2
\in (0, \eps_1)$ such that
for all $\eps \le \eps_2$,
\be\label{ldpub a11}
\sup_{\| u_0 \|_{\ell^2_\kappa} \le R  }
P
\left (
u^\eps (\cdot, 0, u_0   )\in \caly_{n_0}
\right )
\le e^{-
{\frac {s}
{\eps}
 }}.
\ee

For convenience, we set
 $r_0
=(1+ n_0)t_0$. 
Since $\mu^\eps$
is  an invariant measure, we have
$$
\mu^\eps
(\ell^2 \setminus
J^s_{s_1})
=\int_{\ell^2 }
P (u^\eps (r_0, 0,  u_0 )
\notin J^s_{s_1} )
\mu^\eps (d u_0 )
$$
$$
=\int_{  \| u_0 \|_{\ell^2_\kappa }
>R}
P (u^\eps (r_0, 0, u_0   )
\notin J^s_{s_1} )
\mu^\eps (d u_0 )
+
 \int_{  \| u_0 \|_{\ell^2_\kappa }
\le R}
P (u^\eps (r_0,  0, u_0  )
\notin J^s_{s_1} )
\mu^\eps (d u_0 )
$$ 
$$
=\int _{  \| u_0 \|_{\ell^2_\kappa }
>R}
P (u^\eps (r_0, 0, u_0 )
\notin J^s_{s_1} )
\mu^\eps (d u_0 )
$$
$$
+
 \int_{  \| u_0 \|_{\ell^2_\kappa }
\le R}
P (u^\eps (r_0, 0, u_0 )
\notin J^s_{s_1},
\ \ u^\eps (\cdot,0, u_0   ) \in \caly_{n_0}
 )
\mu^\eps (d u_0 )
$$ 
 $$
+
 \int_{  \| u_0 \|_{\ell^2_\kappa }
\le R}
P (u^\eps (r_0, 0, u_0 )
\notin J^s_{s_1},
\ \ u^\eps (\cdot, 0, u_0 ) \notin \caly_{n_0}
 )
\mu^\eps (d u_0)
$$
$$
\le
\mu^\eps (  \ell^2\setminus  {\overline{B}}_{\ell^2_\kappa}
(0, R) )  
+ 
 \int_{ \| u_0 \|_{\ell^2_\kappa }
\le R}
P (  \ u^\eps (\cdot, 0, u_0 ) \in \caly_{n_0}
 )
\mu^\eps (d u_0 )
$$
$$
+
 \int_{ \| u_0 \|_{\ell^2_\kappa }
\le R }
P (u^\eps (r_0, 0, u_0 )
\notin J^s_{s_1},
\ \ u^\eps (\cdot, 0, u_0 ) \notin \caly_{n_0}
 )
\mu^\eps (d u_0 )
$$
$$
\le 
\mu^\eps (  \ell^2\setminus  {\overline{B}}_{\ell^2_\kappa}
(0, R) )  
+ 
  \sup_{ \| u_0 \|_{\ell^2_\kappa }
\le R  }
P (  u^\eps (\cdot, 0, u_0 ) \in \caly_{n_0}
 ) 
$$
\be\label{ldpub a11a}
+
 \int_{  \| u_0 \|_{\ell^2_\kappa }
\le R  }
P (u^\eps (r_0, 0, u_0 )
\notin J^s_{s_1},
\ \ u^\eps (\cdot, 0, u_0 ) \notin \caly_{n_0}
 )
\mu^\eps (d u_0 ).
\ee
By  
\eqref{ldpub a3}
\eqref{ldpub a11}
and \eqref{ldpub a11a}   we get 
\be\label{ldpub a15}
\mu^\eps
(\ell^2 \setminus
J^s_{s_1})
\le
 2 e^{- {\frac {s}{\eps}}}
 +
 \int_{ \| u_0 \|_{\ell^2_\kappa }
\le R}
P (u^\eps (r_0, 0, u_0 )
\notin J^s_{s_1},
  \ u^\eps (\cdot, 0, u_0 ) \notin \caly_{n_0}
 )
\mu^\eps (d u_0).
 \ee

By 
    \eqref{ldpub a5}  we have 
 $$
 \int_{\|u_0 \| \le R}
P\left 
 (u^\eps (r_0, 0, u_0 )
\notin J^s_{s_1},
  \ u^\eps (\cdot, 0, u_0 ) \notin \caly_{n_0}
 \right ) \ 
\mu^\eps (d u_0 )
 $$
 $$
 \le
 \sum_{k=1}^{n_0}
  \int_{\|u_0 \| \le R}
P \left (u^\eps (r_0, 0, u_0 )
\notin J^s_{s_1},
\  u^\eps (kt_0, 0, u_0 ) \in  \cala_{\eta}
 \right )\ 
\mu^\eps (d u_0 )
 $$
 \be\label{ldpub a21}
 +
 \sum_{k=1}^{n_0}
  \int_{\|u_0 \| \le R}
P \left (u^\eps (r_0, 0, u_0 )
\notin J^s_{s_1},
\  \|u^\eps (kt_0, 0, u_0 ) \|>R
 \right )\ 
\mu^\eps (d  u_0 ).
\ee 
 By  the invariance of $\mu^\eps$,   we 
 have
$$
 \sum_{k=1}^{n_0}
  \int_{\|u_0 \| \le R}
P \left (u^\eps (r_0, 0, u_0 )
\notin J^s_{s_1},
\  \|u^\eps (kt_0, 0, u_0 ) \|>R
 \right )\ 
\mu^\eps (d u_0 )
$$
$$
\le
 \sum_{k=1}^{n_0}
  \int_{\|u_0 \| \le R}
P \left (   \|u^\eps (kt_0, ,0, u_0 ) \|>R
 \right )\ 
\mu^\eps (d u_0 )
\le
 \sum_{k=1}^{n_0}
  \int_{\ell^2 }
P \left (   \|u^\eps (kt_0, 0, u_0 ) \|>R
 \right )\ 
\mu^\eps (d u_0 )
$$
$$
=n_0 \mu^\eps 
\left (  \ell^2\setminus {\overline{B}}_{\ell^2}  (0,R)
\right )
\le n_0 \mu^\eps 
(\ell^2 \setminus  {\overline{B}}
_{\ell^2_\kappa} (0, R) ),
  $$
which together with
  \eqref{ldpub a3} shows that
\be\label{ldpub a22}
 \sum_{k=1}^{n_0}
  \int_{\|u_0 \| \le R}
P \left (u^\eps (r_0, 0, u_0 )
\notin J^s_{s_1},
\  \|u^\eps (kt_0, 0, u_0 ) \|>R
 \right )\ 
\mu^\eps (d u_0 )
\le n_0 e^{
-{\frac {s}{\eps}}}.
\ee 

We now write 
  the transition probability 
of $u^\eps (t, 0, u_0)$ as
  $P(0, u_0; t, \cdot)$.
By   the Markov property, 
we find that
  the first
 term on the right-hand side
of \eqref{ldpub a21} satisfies
$$ 
 \sum_{k=1}^{n_0}
  \int_{\|u_0 \| \le R}
P \left (u^\eps (r_0, 0, u_0 )
\notin J^s_{s_1},
\  u^\eps (kt_0, 0, u_0 ) \in  \cala_{\eta}
 \right )\ 
\mu^\eps (d u_0 )
$$
$$
=
\sum_{k=1}^{n_0}
  \int_{\|u_0 \| \le R}
\left (
\int_{y\in \cala_\eta}
P
\left (
u^\eps (r_0 -kt_0, y
) \notin J^s_{s_1}
\right )
p(0,u_0; kt_0, dy)
\right ) \ \mu^\eps (du_0 )
$$
 \be\label{ldpub a23}
\le  \sum_{k=1}^{n_0}
\left ( 
\sup_{y\in \cala_\eta}
P
\left (
u^\eps (r_0 -kt_0,  y
) \notin J^s_{s_1}
\right ) 
\right ) .
\ee

 By \eqref{ldpub a2} we see that
 \be\label{ldpub a23a}
\left \{
u( r_0 -kt_0):\ 
u\in C([0, r_0 -kt_0], \ell^2), 
\ u(0)\in  \cala_\eta,
\ I_{r_0 -kt_0, u(0)  }(u)
\le s-  {\frac 14} s_2
\right \}
\subseteq J^s_{  {\frac 12} s_1 }.
\ee
By \eqref{ldpub a23a} we find that
for any  $y\in \cala_\eta$,
  \be\label{ldpub a23b}
  \left \{ \omega\in \Omega:
 \
u^\eps (\cdot ,0,  y
) \in  \caln  
\left (
I^{s-{\frac 14} s_2}_{r_0 -kt_0, y },\
{\frac 12} s_1
\right )
\right \}
\subseteq
 \left \{ \omega\in \Omega:
 \
u^\eps (r_0 -kt_0, 0,  y
) \in J^s_{s_1}
\right \}.
 \ee
Actually,   for every 
$u^\eps (\cdot, y
) \in  \caln 
\left (
I^{s-{\frac 14}
\delta_2}_{r_0 -kt_0,  y },\
{\frac 12} s_1
\right )$,  
there exists
$\phi \in
I^{s-{\frac 14}
s_2}_{r_0 -kt_0, y }$ such that
$$
\sup_{t\in 
[0, r_0 -kt_0]
 }
 \| u^\eps (t, 0, y 
)  -\phi (t)\|
<{\frac 12} s_1,
$$
and hence by \eqref{ldpub a23a} we get
$$
{\rm dist}_{\ell^2}
\left (
u^\eps (r_0 -kt_0, 0, y)
, \  J^s  \right )
\le
\| u^\eps (r_0 -kt_0, 0, y)
-\phi (r_0 -kt_0)\|
+  
{\rm dist}_{\ell^2}
\left (
\phi  (r_0 -kt_0)
, \  J^s   \right )
< s_1,
$$
and thus  \eqref{ldpub a23b} is valid.
By \eqref{ldpub a23b} we  get
\be\label{ldpub a24}
P\left (
u^\eps (r_0 -kt_0, 0,y )
\notin J^s_{s_1}
\right )
\le
P
 \left (  
u^\eps (\cdot , 0, y
) \notin  \caln  
\left (
I^{s-{\frac 14}
\delta_2}_{r_0 -kt_0, y },\
{\frac 12} s_1
\right )
\right  ).
\ee
 By  \eqref{ldpub a24} and
Theorem  \ref{uldp}
 we see that
there exists $\eps_3\in (0, \eps_2)$
such that for all $\eps \le \eps_3$,
  \be\label{ldpub a25}
\sup_{y\in \cala_\delta}
P\left (
u^\eps (r_0 -kt_0, 0,y)
\notin J^s_{s_1}
\right )
\le
e^{
-{\frac 1\eps}
(s-{\frac 12} s_2)
}.
\ee

It follows from  \eqref{ldpub a23} and
 \eqref{ldpub a25}  
 that
 for all $\eps \le \eps_3$,
  \be\label{ldpub a26}
 \sum_{k=1}^{n_0}
  \int_{\|u_0\| \le R}
P \left (u^\eps (r_0, 0, u_0)
\notin J^s_{s_1},
\  u^\eps (kt_0, 0, u_0 ) \in  \cala_{\eta}
 \right )\ 
\mu^\eps (d u_0)
\le
n_0
e^{
-{\frac 1\eps}
(s-{\frac 12} s_2)
}.
\ee
 By \eqref{ldpub a21},
 \eqref{ldpub a22}
 and
  \eqref{ldpub a26} we 
  obtain that
  for all $\eps \le \eps_3$,
  \be\label{ldpub a27}
 \int_{\|u_0 \| \le R}
P\left 
 (u^\eps (r_0, 0, u_0 )
\notin J^s_{s_1},
  \ u^\eps (\cdot, 0, u_0 ) \notin \caly_{n_0}
 \right ) \ 
\mu^\eps (d u_0 )
\le  
n_0 e^{-
{\frac {s  }
{\eps}
}}
+
 n_0 e^{-
{\frac {s
	 -{\frac 12}s_2  }
{\eps}
}}.
\ee
By \eqref{ldpub a15}
and
\eqref{ldpub a27}
 we get
for all $\eps \le \eps_3$,
$$
\mu^\eps
(\ell^2\setminus
J^s_{s_1})
\le
 (2+n_0)  e^{- {\frac {s}{\eps}}}
  +
 n_0 e^{-
{\frac {s
	 -{\frac 12}s_2  }
{\eps}
}} 
\le
\left (
(2+n_0)  e^{-
{\frac {s_2}
{\eps}
}}
+
 n_0 e^{-
{\frac {s_2}
{2\eps}
}}
\right )
e^{-
{\frac {s-s_2}
{\eps}
}}.
 $$
 Note that
 $(2+n_0)  e^{-
{\frac {s_2}
{\eps}
}}
+
 n_0 e^{-
{\frac {s_2}
{2\eps}
}} \to 0$ as $\eps \to 0$
 and hence 
 \eqref{ldpub a1} is valid.
 In other words, 
 the family $\{\mu^\eps\}_{\eps \in (0,1)}$
 satisfies the   LDP
 upper bound.

   \newpage

      \section{Statements and Declarations} 
      
       \subsection{Funding}
       No funding was received for conducting this study.

  \subsection{Competing Interests} 
   The author has  no relevant financial or non-financial
    interests to disclose.
    
            \subsection{Author contributions}
    I  was the 
contributor in writing the manuscript.

%   \section{Declarations}
%   \subsection{Ethics approval and consent to participate}
%   Not applicable.
%    \subsection{Consent for publication}
%      Not applicable.
%       \subsection{Availability of data and materials}
%      Not applicable.
%      
%       \subsection{Competing interests}
%      The authors declare that they have
%no competing interests.
%      
%            \subsection{Funding}
%       No funding was received for conducting this study.
%      
%             \subsection{Authors' contributions}
%     I  was the 
%contributor in writing the manuscript.
%       \subsection{Acknowledgements}
%        Not applicable.

\end{document}